\pgfplotsset{compat=1.17}
\newcommand{\grad}{\nabla}
\DeclareMathOperator*{\argmin}{arg\,min}
\newcommand{\norm}[2]{\left\lVert #1\right\rVert_{#2}}
\DeclareMathOperator{\dom}{dom}
\DeclareMathOperator{\dist}{dist}
\DeclareMathOperator{\interior}{int}
\DeclareMathOperator{\parm}{par}
\DeclareMathOperator{\aff}{aff}
\DeclareMathOperator{\relint}{ri}
\definecolor{mydarkgreen}{RGB}{39,130,67}
\newcommand{\green}{\color{mydarkgreen}}
\newcommand{\algname}[1]{{\green \small \sf #1}}
\newcommand{\algnametitle}[1]{{ \sf #1}}
\newcommand{\HH}{\mathcal{H}}
\newcommand{\F}{\mathcal{F}}
\newcommand\so{
  \mathchoice
    {{\scriptstyle\mathcal{O}}}%
    {{\scriptstyle\mathcal{O}}}%
    {{\scriptscriptstyle\mathcal{O}}}%
    {\scalebox{.7}{$\scriptscriptstyle\mathcal{O}$}}%
  }
\newcommand\R{\mathbb{R}}
\renewcommand\d{\mathop{}\!\mathrm{d}}
\newcommand\dt{\d t}
\newcolumntype{b}{X}
\newcolumntype{s}{>{\hsize=.5\hsize}X}
\definecolor{darkorange}{rgb}{8.,.4,0.}
\newtheorem{theorem}{Theorem}[section]
\newtheorem{prop}[theorem]{Proposition}
\newtheorem{lem}[theorem]{Lemma}
\newtheorem{cor}[theorem]{Corollary}
\newtheorem{ass}{Assumption}
\crefname{ass}{Assumption}{Assumptions}
\newtheorem{remark}{Remark}
\crefname{remark}{Remark}{Remarks}
\theoremstyle{definition}
\newtheorem{defn}[theorem]{Definition}
\title{\algnametitle{LeAP-SSN}: A Semismooth Newton Method \\with Global Convergence Rates}
\author{Amal Alphonse  \and Pavel Dvurechensky \and Ioannis P.~A. Papadopoulos \and Clemens Sirotenko}
\date{}
\begin{document}
\maketitle
\makeatletter
\begingroup
\renewcommand{\thefootnote}{}           %
\renewcommand{\@makefntext}[1]{#1}      %
\footnotetext{Weierstrass Institute, Mohrenstrasse 39, 10117 Berlin, Germany (\href{mailto:alphonse@wias-berlin.de}{alphonse@wias-berlin.de}, \href{mailto:dvurechensky@wias-berlin.de}{dvurechensky@wias-berlin.de}, \href{mailto:papadopoulos@wias-berlin.de}{papadopoulos@wias-berlin.de}, \href{mailto:sirotenko@wias-berlin.de}{sirotenko@wias-berlin.de})}
\endgroup
\makeatother%
\begin{abstract}
We propose \algname{LeAP-SSN} (\underline{Le}venberg--Marquardt \underline{A}daptive \underline{P}roximal \underline{S}emi\underline{s}mooth \underline{N}ewton method), a semismooth Newton-type method with a simple, parameter-free globalisation strategy that guarantees convergence from arbitrary starting points in nonconvex settings to stationary points, and under a Polyak--\L{}ojasiewicz condition, to a global minimum,  
in Hilbert spaces. The method employs an adaptive Levenberg--Marquardt regularisation for the Newton steps, combined with backtracking, and does not require knowledge of problem-specific constants. We establish global nonasymptotic rates: $\mathcal{O}(1/k)$ for convex problems in terms of objective values, $\mathcal{O}(1/\sqrt{k})$ under nonconvexity in terms of subgradients, %
and linear convergence under a Polyak--\L{}ojasiewicz condition. %
The algorithm achieves superlinear convergence under mild semismoothness and Dennis--Mor\'e or partial smoothness conditions, even for non-isolated minimisers. By combining strong global guarantees with superlinear local rates in a fully parameter-agnostic framework, \algname{LeAP-SSN} bridges the gap between globally convergent algorithms and the fast asymptotics of Newton's method. The practical efficiency of the method is illustrated on representative problems from imaging, contact mechanics,  and machine learning.
\end{abstract}

\section{Introduction}
In this paper, we consider the composite minimisation problem
\begin{equation}
\label{eq:opt}
   \min_{x \in \HH} \{F(x)\coloneqq f(x) + \psi(x) \}, \tag{P}
\end{equation}
where $\HH$ is a %
Hilbert space,  $\psi\colon \HH \to \overline{\mathbb{R}}$ is a potentially nonsmooth proper convex lower semicontinuous function, and $f \in C^{1,1}(\HH; \mathbb{R})$ is a (possibly nonconvex) function such that the Fréchet derivative $f'$ is not necessarily smooth. 
In particular, we are interested in the setting when $f'$ possesses some semismoothness-like properties and our main goal is to propose a variant of the semismooth Newton (SSN) method  that enjoys both global convergence rate guarantees and local superlinear convergence.
Note that when $\psi \equiv 0$ and $f'$ is semismooth, the optimality condition for \eqref{eq:opt} is given by the nonlinear and nonsmooth equation $f'(x)=0$ with the semismooth operator $f'\colon \HH \to \HH^*$.

The SSN method is a generalisation of the classical Newton method used for solving (systems of) nonsmooth equations and for the minimisation of functions with non-Lipschitz second derivatives, see for example the pioneering works \cite{MR1972649, MR1786137, MR1250115, MR1216791, MR1972217, hintermuller2002primal}. SSN methods are especially prominent when solving infinite-dimensional problems, e.g., in PDE-constrained optimisation, due to their mesh-independent and fast local convergence properties. There are recent theoretical results for the method in the context of optimisation problems; these include local superlinear convergence under (local) strong convexity or non-degeneracy  and global asymptotic convergence for its variants with a Levenberg--Marquardt regularisation and/or Armijo-type linesearch strategy \cite{potzl2022second,potzl2024inexact,khanh2024globally,mordukhovich2024second,wachsmuth2025globalized}. At the same time, global convergence rates remain unknown for the SSN method and its variants, even in the simpler setting where $\psi \equiv 0$. In this paper, we close this theoretical gap and propose a novel variant of the globalised SSN method which we abbreviate as \algname{LeAP-SSN} (\underline{Le}venberg--Marquardt \underline{A}daptive \underline{P}roximal \underline{S}emi\underline{s}mooth \underline{N}ewton, \cref{alg:proximal_newton})  that combines the best of both worlds. Namely, it allows us to prove global convergence rates in several settings, including the convex setting and a  nonconvex setting (with better rates under a Polyak--\L{}ojasiewicz (PL) condition), and to prove %
superlinear convergence under semismoothness and a Dennis--Moré-type condition.  In addition, in finite dimensions, we also prove that our algorithm identifies a $C^2$-manifold in which eventually all iterates lie, and use this and the theory of partial smoothness and active manifolds to prove superlinear convergence without requiring a Dennis--Moré assumption. We conclude the paper with some numerical experiments.

Our work, in a sense, lies in the interface of research questions studied in the (PDE-constrained) nonsmooth optimisation and (smooth) convex optimisation communities, and we hope that this paper can start an avenue for other global convergence rate results for SSN and its variants. 

\subsection{Contributions}
On a high level, for the problem \eqref{eq:opt}, we propose a SSN-type algorithm \algname{LeAP-SSN} (see \cref{alg:proximal_newton}) with guaranteed global convergence rates and local superlinear convergence. Our approach employs  proximal steps and a Levenberg--Marquardt globalisation strategy with backtracking inspired by \cite{doikov2024super}, that avoids relying on second-order semismoothness assumptions as required in \cite{potzl2022second,potzl2024inexact,wachsmuth2025globalized}. Unlike classical trust-region methods, our globalisation mechanism ensures a practical and simple implementation while retaining theoretical guarantees. 
In each iteration, our algorithm constructs a model $\widetilde{F}$ of the objective $F$ combining a quadratic model of $f$, the functional $\psi$, and a quadratic regularisation. Our main assumption on these subproblems, which holds even in some cases when $F$ is nonconvex, is that it is possible to find a stationary point of $\widetilde{F}$ that does not increase $\widetilde{F}$, see \cref{ass:subproblem} for a formal statement. 
We also introduce a mild assumption on $f'$, see \cref{ass:main_inequality}, which is key to global convergence rates, and which holds for a large class of functions and settings. 
Under \cref{ass:main_inequality}, we obtain a series of global and local nonasymptotic convergence rates:
\begin{enumerate}[label=(\arabic*)] 
    \item In the nonconvex case, we prove that the sequence $(x_k)_{k \in \mathbb{N}}$ generated by our algorithm from arbitrary $x_0 \in \dom F$ satisfies\footnote{Here and below $a_k=\mathcal{O}(b_k)$ means that there exists an explicit constant $c$ such that $a_k\leq cb_k$ for all $k\geq 0$. 
    } $\min_{0\leq i \leq k-1}\norm{F'(x_{i+1})}{*}=\mathcal{O}(1\slash \sqrt{k})$, see \cref{thm:global_convergence_sublinear_nonconv}.  
    \item If additionally $F$ satisfies a Polyak--Łojasiewicz (PL) condition (see \cref{ass:PL}), we prove that $x_k$ converges to a global solution $x^*$ of \eqref{eq:opt} with a linear rate from arbitrary $x_0 \in \dom F$. Moreover, $F(x_{k})-F(x^*)$ converges to $0$ linearly, see \cref{thm:global_convergence_nonconv_PL}.
    \item If $F$ is convex, we prove that for arbitrary $x_0 \in \dom F$ our algorithm guarantees $F(x_{k})-F^*=\mathcal{O}(1\slash k)$, where $F^*$ is the global minimal value in \eqref{eq:opt}, see \cref{thm:global_convergence_sublinear}.
    \item If for some isolated local minimum $x^*$ in \eqref{eq:opt}, \cref{ass:main_inequality} holds only locally around $x^*$, the PL  and quadratic growth (QG) conditions (see \eqref{eq:QG_local_1}) also hold locally around $x^*$, and the model $\widetilde{F}$ is strongly convex, we prove that  $x_k \to x^*$ and $F(x_{k})-F(x^*) \to 0$ linearly provided that $x_0$ is sufficiently close to $x^*$, see \cref{thm:local_convergence_1}. In other words, we prove that our algorithm has local linear convergence. 
\end{enumerate}
When additionally $f'$ has higher regularity and $F$ is locally convex, we show that the above bounds asymptotically improve. To be precise, the linear convergence in the above results improves to superlinear in each of the following cases:
\begin{enumerate}[label=(\arabic*)] 
\setcounter{enumi}{4}
\item If $\HH$ is finite-dimensional and $f$ is $C^2$ (i.e., twice continuously differentiable) 
around $x^*$, %
see \cref{thm:global_convergence_c2}.
\item If $f'$ is semismooth at $x^*$, and a  Dennis--Moré-type condition is satisfied, %
see \cref{thm:global_superlinear} and \cref{thm:local_superlinear}.
\item If $f$ is $C^2$ and convex, %
see \cref{cor:c2_convex_case}.
\item If $\HH$ is finite-dimensional, $f'$ is semismooth around $x^*$, $F$ is partially smooth at $x^*$ relative to a $C^2$ manifold, and a non-degeneracy (or strict complementary) condition is satisfied, %
see \cref{thm:fast_convergence_partial_smoothness}. %
\end{enumerate}
Thus, in particular, we obtain local superlinear convergence of our algorithm as for classical SSN, see \cref{thm:local_superlinear}.  The critical ingredient for these superlinear convergence results is to show that the regularisation parameter adaptively chosen in the algorithm asymptotically goes to zero, meaning that asymptotically, our method becomes classical SSN. 
As a result, we show that semismoothness allows for obtaining stronger local and global asymptotic convergence properties. Our results are interesting also in finite dimensions since they establish faster asymptotic convergence under higher regularity. The above global results are summarised for convenience in \cref{tab:table_results}. 
\begin{table}[ht]
\centering
\begin{tabularx}{\textwidth}{bs}%
\midrule
\textbf{Assumption} & \textbf{Global convergence rate %
}  \\
\midrule
- & $\mathcal{O}(1\slash \sqrt{k})$ for $\min_{i \leq k}\norm{F'(x_{i})}{*}$ \\
$f \in C^2$, $\dim(\HH) < \infty$, $x_k \to x^*$ & $\so(1\slash \sqrt{k})$ for $\min_{i \leq k}\norm{F'(x_{i})}{*}$ \\
$f$ convex &$\mathcal{O}(1\slash k)$ for $F(x_k)$\\
$f$ convex,  $f \in C^2$, $\dim(\HH) < \infty$, $x_k \to x^*$ & $\so(1\slash k)$ for $F(x_k)$\\
PL  & Linear for $x_k$ and $F(x_k)$\\
PL, $f \in C^2$, $\dim(\HH) < \infty$, $x_k \to x^*$ & Superlinear\\
PL, $f \in C^2$, $f$ convex & Superlinear\\
PL, $f$ semismooth, DM-type condition, $F$ locally convex, $H \succeq 0$ & Superlinear\\
PL, $f$ semsimooth, partial smoothness, $F$ locally convex, $H \succeq 0$ & Superlinear\\
\bottomrule
\end{tabularx}
\caption{High-level summary of our main results under \cref{ass:basic,ass:main_inequality,ass:subproblem}.}
\label{tab:table_results}
\end{table}
We highlight that, crucially, our algorithm achieves  
\begin{itemize}
\item \textbf{superlinear convergence under the PL condition to \emph{non-isolated}\footnote{By a \emph{non-isolated} minimiser, we mean a minimiser that we do not assume to be isolated.} minima}, whereas most existing works guarantee superlinear convergence only under strong convexity at the limit point. 
Convergence to non-isolated solutions is particularly relevant in applications such as inverse problems, where non-isolated stationary points naturally arise (for instance, when the forward operator is non-injective, as is common in e.g.~medical image reconstruction). Local superlinear convergence in such settings is an active area of research; see, e.g., \cite{rebjock2024fast} for recent results in the smooth case. To the best of our knowledge, no existing work establishes \emph{superlinear} convergence \emph{to non-isolated solutions} in the setting we consider. 

\item \textbf{automatic strong convergence of the iterates in Hilbert spaces under a PL condition}, without requiring additional assumptions commonly found in the literature, such as compactness or weak-to-strong continuity of \( f' \). While such results are expected based on prior works on the Kurdyka–Łojasiewicz (KL) inequality (e.g., \cite{attouch2013convergence,frankel2015splitting}), these typically rely on non-vanishing step sizes, i.e., they exclude the case \( \lambda_k \to 0 \).

    \item \textbf{active manifold identification in finite time}, that is, in finite dimensions, all iterates of our algorithm eventually lie on a $C^2$-manifold where we can harness partial smoothness and active manifold theory to obtain \emph{automatic satisfaction of a Dennis--Moré-type (DM-type) condition in an entirely nonsmooth setting} and deduce superlinear convergence. %
    
    \item \textbf{the obtainment of the above results without assuming Lipschitz Hessians}, which is a standard assumption made in the literature on global rates for second-order and quasi-Newton methods (see the literature review in the next subsection).
    
    \item \textbf{adaptivity to parameters and function classes}, in the sense that our method is agnostic to knowledge about convexity, validity of the PL condition, semismoothness, etc., for the specific problem being tackled. %
\end{itemize}
We close this subsection by noting that we implement and apply \algname{LeAP-SSN} to total variation image restoration, support vector machine classification, and Signorini contact problems in \cref{sec:numerics}. The implementation is publicly available \cite{github:leapssn}.

\subsection{Related literature}
\paragraph{Related work on globalisation of SSN.}  
The globalisation of semismooth Newton methods is not a new endeavour. Numerous works have focused on globalising both classical and nonsmooth Newton-type methods, however, the majority of these approaches are restricted to the finite-dimensional setting and typically rely on linesearch procedures to ensure global convergence without proving rates. Though we are not aware of any existing approaches that establish global convergence rates for semismooth Newton methods in our setting, let us briefly review the works most closely related to ours. 
\begin{enumerate}[label=(\arabic*)] \itemsep=0em
\item  \cite{potzl2022second} (and the followup \cite{potzl2024inexact} with inexactness) proposes a similar regularised approach; however, the analysis there relies on the notion of second-order semismoothness (which our work does not need) and %
essentially requires $F$ to be strongly convex to establish fast local convergence, %
    and does not establish global convergence rates. %
    \item   \cite{Alphonse} includes an inexact  globalised semismooth Newton method to tackle fixed-point equations in Banach spaces with an emphasis on equations arising from the obstacle problem; the globalisation there is via the Banach fixed point theorem.  \cite{wachsmuth2025globalized} addresses a strongly convex optimal control problem and employs an Armijo-type line search strategy, and also uses second-order semismoothness for its local superlinear result. %
    Armijo linesearch methods have the advantage of requiring only a single linear system solve per iteration, even during backtracking. Moreover, it can be shown that full Newton steps are eventually accepted, leading to superlinear convergence under semismoothness without requiring additional assumptions such as a Dennis--Moré condition. Note that neither work establishes global rates. 
\item The work in \cite{pang1995globally} (see also \cite{facchinei2003finite,cui2021modern}) was among the first to globalise a semismooth Newton-type method in a setting closely related to ours. They consider finite-dimensional problems of the form $\min_{x \in X} f(x)$, where $X$ is a polyhedron and $\nabla f$ is semismooth (also called an $SC^1$ function). Similar to our approach, their method employs a Levenberg--Marquardt--type regularization of the (Newton) Hessian of the form $\nabla^2_N f(x_k) + \mu_k I$, involving the Newton derivative $\nabla_N^2 f(x_k)$. The choice of $\mu_k$ is not specified beyond the assumption $\mu_k \to 0$, and they establish superlinear convergence under suitable regularity conditions.
\item \cite{khanh2024globally} and the recent book \cite{mordukhovich2024second} present results on minimizing  $C^{1,1}$ functions $f$ in finite dimensions, closely related to the present work. They use a Levenberg--Marquardt--type scheme which, instead of solving a regularised semismooth Newton system, tackles the inclusion $-\nabla f(x_k) \in (\partial^2 f(x_k) + \mu_k I )[d_{k}]$ to find the Newton direction $d_k$, where $\partial^2 f$ is the second-order subdifferential and $\mu_k = c \|\nabla f(x_k)\|$. This inclusion can sometimes be easier to solve than the semismooth Newton equation \cite{khanh2024globally}. Combined with an Armijo-type linesearch, they prove superlinear convergence under semismoothness at the limit point, without Dennis--Moré--type conditions. A notable difference is that $\mu_k \to 0$ follows automatically from their algorithm, while our method needs extra smoothness to ensure this. %
\end{enumerate}
None of the above works obtain global convergence rates for their versions of SSN where applicable. Moreover, they either consider the finite-dimensional or (strongly) convex setting, or utilise stronger assumptions like second-order semismoothness. 

\paragraph{Related work on global rates in the smooth, finite-dimensional setting.} There is a vast literature establishing global convergence rates for regularised Newton methods for finite-dimensional convex and nonconvex optimisation problems, with the main assumption being that the Hessian is Lipschitz. For the nonconvex setting the classical results are \cite{nesterov2006cubic,cartis2011adaptive}, where global rates are obtained for convex and nonconvex settings based on the cubic regularisation of the Newton method. The works \cite{nesterov2008accelerating,monteiro2013accelerated,gasnikov2019near,carmon2022optimal,kovalev2022first} further improve the rates by proposing accelerated versions in the convex setting. The Lipschitz Hessian condition was relaxed to a H\"older condition in \cite{grapiglia2017regularized,grapiglia2019accelerated}, where universal regularised Newton methods (including accelerated versions) were proposed. The above methods mainly use a regularisation of power greater than 2 in auxiliary subproblems in each iteration, which makes this subproblem problematic to solve in practice. The most recent advances are based on the use of quadratic regularisation akin to Levenberg--Marquardt, but with the regularisation parameter proportional to some power of the gradient norm. In this avenue, global convergence rates are obtained in the convex case for the Levenberg--Marquardt method in \cite{mishchenko2023regularized,doikov2024gradient} under a Lipschitz Hessian assumption and in \cite{doikov2024super} under either H\"older Hessians or a third-order derivative assumption. All the above papers focus on global rates in finite dimensions and either in the convex setting with H\"older Hessians or in the nonconvex setting, but with Lipschitz Hessians. They also do not consider the behaviour of their methods under any form of semismoothness. Despite the fact that these approaches are not directly applicable in our setting of semismooth derivatives and infinite dimensions, our inspiration largely comes from the paper \cite{doikov2024super} which introduces a universal Newton method that adapts to the H\"older exponent of various smoothness classes automatically. Note that assuming H\"older or Lipschitz Hessians is out of the question in our setting because it would imply, by a simple argument, that $f'$ is not only semismooth but differentiable.

\paragraph{Related work on Newton methods for non-strongly convex functions.}
Most works in the paragraph above rely on local strong convexity to obtain superlinear convergence, but recent research has focused on globalising Newton-type methods for nonconvex composite problems. In this setting, globalisation typically relies on Hessian regularisation, as in the present work. A central trend is replacing strong convexity with the \emph{Kurdyka--Łojasiewicz (KL)} inequality, a generalisation of the  PL condition \cite{polyak1963gradient}, which enables fast convergence even to non-isolated stationary points. Foundational results by Attouch et al.~\cite{attouch2009convergence,attouch2010proximal,attouch2013convergence} established KL-based convergence for descent methods. The first Newton-type extension \cite{frankel2015splitting} achieved linear convergence. In the convex case, Mordukhovich  et al.~\cite{mordukhovich2023globally} recently obtained superlinear convergence under metric subregularity, closely related to PL-type inequalities. Inspired by this line of work, Liu et al.~\cite{liu2024inexact,liu2025inexact} analyse a related composite setting, which is nonconvex, and establish superlinear convergence under the KL inequality and a local Hölder error bound, thereby allowing for non-isolated minima. Kanzow et al.~\cite{vom2024inexact} extends \cite{liu2024inexact} to a linesearch-free variant with a similar analysis.
To our knowledge, \cite{wang2025convergence,liu2025inexact} is the first to prove fast local convergence to \emph{non-isolated} stationary points in the PL/KL framework. Importantly, \emph{all of the above results are in finite dimensions} and \emph{rely critically on the Lipschitz continuity of the Hessian}.
 
\paragraph{Related work on globalisation via trust regions.}
Trust-region methods are classical and highly effective for smooth (\(C^2\)) minimisation \cite{conn2000trust,cartis2022evaluation}. 
In the setting of functions admitting a \emph{semismooth Newton derivative}, the majority of results concern the solution of semismooth equations coming from the corresponding first order optimality system \cite{ulbrich2011semismooth,facchinei2003finite}, with recent advances providing guarantees under the KL condition \cite{ouyang2025trust}.
For minimisation of functions with Newton differentiable derivatives the literature is still emerging with early work by Qi~\cite{qi1995trust} and recent extensions to minimisation on manifolds by Zhang et~al.~\cite{zhang2024riemannian}. Overall, trust-region schemes are robust to negative curvature, naturally accommodate inexact subproblem solves, and obtain global convergence and local superlinear convergence under different regularity assumptions, but are typically  designed for finite-dimensional settings, demand stronger assumptions and more elaborate implementations than our regularised Newton-type globalisation. 
Moreover, the question of global rates remains open. Incorporating trust-region mechanisms in the spirit of \cite{baraldi2023proximal} may further enhance our method; we leave this to future work.

\paragraph{Summary.} To conclude, to the best of our knowledge, \emph{no parameter-free globalisation strategy for the minimisation of functions with Newton differentiable derivatives on Hilbert spaces exists that achieves global sublinear rates together with a transition to superlinear convergence in the PL setting}. Our work closes this gap.

\subsection{Organisation of the paper}
The paper is organised as follows. In \cref{sec:technical_prelim} we introduce basic notation and definitions, our main assumptions and preliminary results. In \cref{sec:algorithm}, we formulate our algorithm \algname{LeAP-SSN} and detail some inequalities that come in use in the rest of the paper. We prove the above-mentioned global convergence results, local linear convergence, and superlinear convergence results in \cref{sec:global rates}, \cref{sec:local convergence}, and \cref{sec:fast}, respectively. Finally, we conclude with some numerical experiments in \cref{sec:numerics}.

\section{Technical preliminaries}\label{sec:technical_prelim}
We start with some notation and necessary definitions. The dual to the space $\HH$ is denoted by $\HH^*$. We use $\|\cdot\|$ to denote the norm on $\HH$ and $\|\cdot\|_*$ to denote the norm on $\HH^*$. We write $\langle g, x \rangle$ to denote the duality pairing of $g\in \HH^*$ and $x \in \HH$. By $\mathcal{R}\colon \HH \to \HH^*$ we mean the Riesz map. %
For a point $x\in \HH$ and a closed set $A \subseteq \HH$, we denote ${\rm dist}(x,A)\coloneqq \inf_{u\in A}\|x-u\|_{\HH}$. Given $x\in \HH$, we set $B_R(x)\coloneqq \{u\in \HH:\|x-u\|\leq R\}$ to be the closed ball of radius $R$ around $x$. For two points $x,y\in \HH$, we denote $[x,y]\coloneqq \{tx+(1-t)y : t \in [0,1]\}$.
For a function  $G \colon \HH \to \overline{\mathbb{R}}\coloneqq \R \cup \{\+\infty\}$, we denote its effective domain by $\dom G\coloneqq \{x\in \HH: G(x)<+\infty\}$. 
For the function $F$ in \eqref{eq:opt} and $x_0\in \dom F$, we denote the sublevel set of $F$ as 
\begin{equation}
    \label{eq:sublevel_set_def}
    \F_0\coloneqq \{ x \in \dom F : F(x) \leq F(x_0)\}.
\end{equation}
For a proper convex function $\psi \colon \HH \to \overline{\mathbb{R}}$, we denote by $\partial \psi$ its convex subdifferential, i.e., for any $x \in \dom \psi$,
\[
\partial \psi(x) \coloneqq  \{ h \in \HH^* : \psi(x)-\psi(y) \leq \langle h, x-y \rangle, \quad \forall y \in \HH\}.
\] 
We also use $\psi'(x)$ to denote an element of $\partial \psi(x)$.
For a function $F\colon \HH \to \overline{\mathbb{R}}$, we denote by $\hat\partial F$ the Fréchet subdifferential
\[
\hat \partial F(x) \coloneqq  \left\{ g \in \HH^* : \liminf_{u \to x} \frac{F(u)-F(x)-\langle g, u-x\rangle}{\norm{u-x}{}} \geq 0\right\}.
\]
The limiting (Mordukhovich) subdifferential $\partial_L F(x)$ at a point $x \in \mathrm{dom}(F)$ is defined as the set of elements $p \in \HH^*$ for which there exist sequences $(x_k)_{k \in \mathbb{N}} \subset \HH$ and $(p_k)_{k \in \mathbb{N}} \subset \HH^*$ such that $p_k\in \hat{\partial} F(x_k)$ for all $k \in \mathbb{N}$ and the following limits hold: %
\begin{equation*}
    x_k \to x, \qquad p_k \overset{\ast}{\rightharpoonup} p, \qquad F(x_k) \to F(x) \qquad \text{as $k \to \infty$.}
\end{equation*}
Note that if $F$ is convex, we have $\partial_L F(x) = \hat \partial F(x) = \partial F(x) $ \cite[Proposition 1.2]{Kruger}.  

\noindent For a function $f\colon \HH \to \overline{\mathbb{R}}$ that is $C^{1}$ on $\dom f$, i.e., continuously Fréchet differentiable, we denote by $f'(x)\in \HH^*$ its Fréchet derivative at $x\in \dom f$. %
\begin{remark}[On the setting of \eqref{eq:opt}]
If $F = f + \psi$ with $f$ continuously Fr\'echet differentiable and $\psi$ convex, the sum rule \cite[Proposition~1.107 (ii)]{Mordukhovich1} yields, for all $x \in \dom(F)$,
\[
\partial_L F(x) = f'(x) + \partial_L \psi(x) =  f'(x) + \hat{\partial} \psi(x) = \hat{\partial} F(x).
\]
In other words, the limiting subdifferential and the Fréchet subdifferential coincide in this setting. Thus, throughout the paper, we write $\partial F(x)$ for the limiting subdifferential, which coincides with the Fr\'echet subdifferential in this setting. We also use $F'(x)$ to denote an element of $\partial  F(x)$.
\end{remark}
\noindent Note that, by Fermat's rule \cite[Proposition 1.114] {Mordukhovich1}, we have at a local minimum $\bar x \in \dom\psi$ that 
\[0 \in \partial(f+\psi)(\bar x) = f'(\bar x) + \partial \psi(\bar x).\]
For a function $f \in C^{1,1}(\dom f, \overline{\mathbb{R}})$, i.e., $f \in C^{1}$ and has Lipschitz Fréchet derivative,  we say that $f'$ is \textit{Newton differentiable} (or generalised differentiable) at a point $x\in \dom f$ if there exists an open subset $U\subset \dom f $ with $x \in U$ and a family of maps $H\colon U \to \mathcal{L}(\HH, \HH^*)$ such that 
\begin{equation}
\label{eq:semismoothness_def}
    \lim_{h \in \HH, h \to 0} \frac{\norm{f'(x+h)-f'(x)-H(x+h)h}{*}}{\norm{h}{}} = 0.
\end{equation}
(Note the difference to the Fréchet derivative, where $H(x+h)$ above would instead be $H(x)$.) We also say that $f'$ is Newton differentiable at $x$ with respect to $H$. If $f'$ is Newton differentiable for every $x\in U$, we say that it is Newton differentiable on $U$. We slightly abuse notation and use $H(x)$ to denote the corresponding linear operator from $\HH$ to $\HH^*$ which we call the Newton or generalised derivative of $f'$ at $x$.  Finally, we adopt---by slight abuse of notation, but consistent with the convention in finite-dimensional optimisation---the shorthand
\begin{equation}
    H(x) \succeq \mu I 
    \;\; \underset{\mathrm{def}}{\Longleftrightarrow}\;\;
    \langle (H(x) - \mu \mathcal{R})d, d \rangle \geq 0 
    \qquad \forall d \in \HH,
\end{equation}
where $\mu \in \mathbb{R}$ and, as before, $\mathcal{R} \colon \HH \to \HH^*$ denotes the Riesz map.
\begin{remark}
\label{rem:semismooth}
A few words are in order regarding our definition of semismoothness.
\begin{itemize}
    \item To be as general as possible, we do not specify which particular generalised derivatives $H(x)$ are used in the above definition. For example, choosing a single-valued selection of Clarke’s Jacobian as generalised derivative $H(x)$, we obtain the classical definition of semismooth functions. Moreover, if $f'$ is semismooth, every single-valued selection of Clarke’s Jacobian works as $H(x)$ in the above definition. Following a tradition \cite{hintermuller2002primal, MR1972649}, we still use the word ``semismoothness" to denote that $f'$ satisfies the above definition. Yet, we underline that our paper covers a much more general setting of Newton differentiability that extends the notion of differentiability even further than semismooth functions.  

\item In some communities, %
Newton differentiability is often defined in terms of set-valued derivatives. Indeed, a map $T\colon X \to Y$ is said to be Newton differentiable with Newton derivative $G\colon X \rightrightarrows \mathcal{L}(X,Y)$ if
\[\lim_{h \to 0} \sup_{M \in G(x+h)}\frac{\norm{T(x+h)-T(x)-Mh}{}}{\norm{h}{}} =0.\]
The definition of Newton derivative we use in our work amounts to taking a selection from the set-valued $G$ at each point. Thus, our Newton derivative can be viewed as an abuse of notation where we pick a realisation of one of the elements of the set $G$ at each point. In this sense, our algorithm and results can handle set-valued derivatives. 
\end{itemize}
We refer to \cite{RickmannHerzogHeberg} for a survey relating different notions of generalised derivatives.
\end{remark}

\subsection{Main assumptions}
In this subsection, we present and discuss the main assumptions on the regularity of $F$ in \eqref{eq:opt} that are used to obtain our theoretical results. We emphasise that we do not need all assumptions in all results: we will refer specifically to the assumptions that are needed in each result. However for convenience, we introduce the important assumptions here.

The following structural assumption about problem \eqref{eq:opt} is used for all the results of this paper and thus one should bear in mind that it is in effect throughout.
\begin{ass}
\label{ass:basic}
\begin{enumerate}[label=(\roman*)]\itemsep=0em
    \item The function $\psi\colon \HH \to \overline{\mathbb{R}}$ is convex, proper, and lower semicontinuous.
    \item The function $f\colon \HH \to \overline{\mathbb{R}}$ is continuously Fréchet differentiable on $\dom f$.
    \item The function $F\coloneqq f+\psi$ is bounded from below by the global minimal value $F^*$ and has a nonempty solution set $S\coloneqq \{x^* \in \HH: F(x^*)=F^*\}$.
\end{enumerate} 
\end{ass}
Note that, in general, we do not assume $f$ and $F$ to be convex. 
The next assumption is fundamental and is used to prove global convergence rates of our algorithm.
\begin{ass}
\label{ass:main_inequality}
There exists a constant $L\geq 0$ such that at any point $x\in \dom f$ it is possible to evaluate a symmetric operator $H(x) \in \mathcal{L}(\HH, \HH^*)$ such that %
$f'$ satisfies
\begin{equation}
\label{eq:main_inequality}
    \norm{f'(y) -  f'(x) - H(x)(y-x)}{*} \leq L\norm{x-y}{}, \quad \forall x,y \in \dom f.
\end{equation}
\end{ass}
We assumed symmetricity of $H$ for simplicity, but the theory below would still work with relevant modifications (for example, $H(x)$ in \eqref{eq:prox_step_alg_FO} would need to be replaced with $\frac 12(H(x) + H(x)^*)$). 
Let us give some typical scenarios to illustrate the connection between $H$ and $f$:
\begin{itemize}
\item \textbf{If $f$ is $C^2$}: the smooth case when $f$ is twice continuously differentiable offers the natural choice $H = f''$. 
\item \textbf{If $f'$ is semismooth}: if $f'$ is semismooth or Newton differentiable, we can use its Newton derivative as $H$.
\item \textbf{The quasi-Newton setting}: here an approximation $H$ of the Hessian (or Newton derivative) is used for implementation purposes. Typical assumptions in the literature include $f''$ being Lipschitz continuous and constructing $H$ positive semi-definite.
\end{itemize}
We remark that an assumption similar to \cref{ass:main_inequality} is made in \cite[\S 4]{YamakawaYamashita} in the $C^2$ setting. Moreover, it can be seen as a nonasymptotic counterpart of $0\textsuperscript{th}$-order semismoothness \cite[Definition 2.6]{hintermuller2010semismooth}.  To further motivate \cref{ass:main_inequality}, note that the literature on the analysis of SSN and its variants often requires (see e.g. \cite[\S 1]{potzl2022second}, \cite[Assumption 1]{wachsmuth2025globalized}, \cite[Assumption 3.6]{MR3812973}, \cite[Assumption 2.2]{Alphonse}) that the operator $H(x)$ is uniformly bounded, i.e., for some $M>0$,
\begin{align}
\label{eq:bounded_Hessian}
    \norm{H(x)}{\mathrm{op}}\coloneqq \sup_{h\in \HH, h \ne 0} \frac{\|H(x)h\|_*}{\|h\|}\leq M.
\end{align} 
As the next lemma shows, \cref{ass:main_inequality} follows from \eqref{eq:bounded_Hessian}, and thus the former can be seen as a relaxation of the latter.%
\begin{lem}\label{lem:relation_bdd_hess_and_main_inequality}
Let \eqref{eq:bounded_Hessian} hold. 
\begin{enumerate}[label=(\roman*)]\itemsep=0em
\item  If $f'$ is Lipschitz with constant $L_1$, then, \cref{ass:main_inequality} holds with constant $M+L_1$.
\item If $f'$ is Newton differentiable with respect to $H$, %
then, \cref{ass:main_inequality} holds with constant $2M$.
\end{enumerate}
\end{lem}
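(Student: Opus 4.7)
\textbf{Proof plan for \cref{lem:relation_bdd_hess_and_main_inequality}.} For part (i), the argument is essentially a one-line triangle inequality. Since $\|H(x)\|_{\mathrm{op}} \leq M$, we have $\|H(x)(y-x)\|_* \leq M\|y-x\|$, and the Lipschitz hypothesis on $f'$ gives $\|f'(y)-f'(x)\|_* \leq L_1\|y-x\|$. Combining:
\begin{equation*}
\|f'(y)-f'(x)-H(x)(y-x)\|_* \leq \|f'(y)-f'(x)\|_* + \|H(x)(y-x)\|_* \leq (M+L_1)\|y-x\|.
\end{equation*}

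For part (ii), the plan is to reduce to (i) by first establishing that Newton differentiability of $f'$ with respect to $H$, together with the uniform bound $\|H(x)\|_{\mathrm{op}} \leq M$, already forces $f'$ to be globally Lipschitz with constant $M$. Applying (i) with $L_1 = M$ then yields the claimed constant $M + M = 2M$. The nontrivial step is therefore the global Lipschitz bound on $f'$; this is where the work lies, since Newton differentiability only provides an asymptotic $o(\|h\|)$ remainder at each point, not a uniform bound.

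To carry out that step, I would fix $x, y \in \dom f$ and parametrise the segment by $\gamma(t) = x + t(y-x)$ for $t \in [0,1]$, then for each $v \in \HH$ with $\|v\|\leq 1$ consider the real-valued function $\phi_v(t) \coloneqq \langle f'(\gamma(t)), v \rangle$, which is continuous by continuity of $f'$. Applying \eqref{eq:semismoothness_def} at $\gamma(t_0)$ with $h = (t-t_0)(y-x)$ and using $\|H(\gamma(t))\|_{\mathrm{op}} \leq M$, one obtains, for any $\epsilon > 0$ and $t$ sufficiently close to $t_0$,
\begin{equation*}
|\phi_v(t) - \phi_v(t_0)| \leq \|f'(\gamma(t))-f'(\gamma(t_0))\|_* \leq (M+\epsilon)\|y-x\|\,|t-t_0|.
\end{equation*}
This means the upper Dini derivative of $\phi_v$ is bounded by $M\|y-x\|$ at every $t_0 \in [0,1]$, so the standard result that a continuous function on an interval with upper Dini derivative bounded by $K$ is Lipschitz with constant $K$ (applied both to $\phi_v$ and $-\phi_v$) gives $|\phi_v(1)-\phi_v(0)| \leq M\|y-x\|$. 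Taking the supremum over $v$ in the unit ball of $\HH$ yields $\|f'(y)-f'(x)\|_* \leq M\|y-x\|$, and the triangle inequality of part (i) finishes the proof.

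The only subtle point is the Dini-derivative-to-Lipschitz passage, which is a classical fact but I would cite (or sketch briefly) since the step from pointwise asymptotic control to a uniform bound on a compact interval is precisely the mechanism that converts the local nature of Newton differentiability into a global Lipschitz estimate. Everything else is elementary triangle-inequality manipulation.
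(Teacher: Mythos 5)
Your proposal is correct and, for part~(i), reproduces the paper's one-line triangle-inequality argument exactly. For part~(ii), the overall structure also matches: both you and the paper reduce to part~(i) by first establishing that $f'$ is Lipschitz with constant~$M$, then applying the triangle inequality to get $M+M=2M$. The difference is in how that intermediate Lipschitz bound is obtained: the paper simply cites \cite[Proposition~2.3]{MR4431970}, whereas you prove it from scratch by restricting to a segment $\gamma(t)=x+t(y-x)$, forming the scalar functions $\phi_v(t)=\langle f'(\gamma(t)),v\rangle$, using Newton differentiability together with $\norm{H(\cdot)}{\mathrm{op}}\leq M$ to bound the Dini derivatives of $\pm\phi_v$ by $M\norm{y-x}{}$, and invoking the classical Dini-to-Lipschitz lemma on the compact interval $[0,1]$. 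Your route is more elementary and self-contained, and you correctly isolate the genuine difficulty (upgrading the pointwise $o(\norm{h}{})$ remainder to a uniform estimate over a compact set); the paper's route is shorter but outsources that step to the literature. One small point worth flagging: the segment argument implicitly requires $[x,y]\subset\dom f$, i.e.\ convexity of $\dom f$ (and Newton differentiability at every point of the segment, not just at $x$ and $y$); the paper makes the same implicit assumption through its citation, so this is not a gap relative to the paper, but it would be worth stating explicitly if you include the self-contained version.
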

\begin{proof}
In case (ii), by \cite[Proposition 2.3]{MR4431970}, it holds that $f'$ is Lipschitz with constant $L_1=M$.
The rest of the proof follows by the triangle inequality:
\[
\norm{f'(y) -  f'(x) - H(x)(y-x)}{*}\leq \norm{f'(y) -  f'(x)}{*} +   \norm{H(x)(y-x)}{*}\leq (L_1+M)\norm{x-y}{}.
\]
\end{proof}
Finally, we also remark that many semismooth functions appearing in applications have bounded Newton derivative %
and, hence, satisfy \cref{ass:main_inequality}. 

For some of the results, we use the following relaxation of strong convexity known as the Polyak--\L{}ojasiewicz (PL) condition/inequality, which does not even require $F$ to be convex. We use it in two contexts: sometimes globally for obtaining certain global convergence rates and locally for obtaining local convergence rates. The assumption for our global results is formulated as follows. %
\begin{ass}
\label{ass:PL}
$F$ satisfies the Polyak--\L{}ojasiewicz (PL) condition on $\dom F$, i.e., for some $\mu >0$,
\begin{equation}
    \label{eq:PL}
        \frac{1}{2\mu} {\rm dist}(0, \partial F(x))^2 \geq F(x)-\min_{x\in \dom F} F(x), \quad \forall x \in \dom F.
\end{equation}
\end{ass} 
Originally introduced in \cite{polyak1963gradient}, the PL inequality has since been widely employed to analyse the linear convergence of first-order methods (see, e.g., \cite{karimi2016linear}) and was later extended to nonsmooth and nonconvex settings under the broader framework of the Kurdyka--Łojasiewicz (KL) inequality, see \cite{attouch2009convergence,attouch2010proximal,attouch2013convergence} and the discussion in \cite{bento2025convergence} for a historical account. 
Another relaxation of strong convexity that does not require $F$ to be convex that we will use for one of our results on local convergence rates is the quadratic growth (QG) condition $\frac{\mu}{2} \dist(x,S)^2 \leq F(x) -\min_{x\in U} F(x)$.%

Previous works on local and global convergence of SSN and its variants either use a strong convexity assumption \cite{potzl2022second,wachsmuth2025globalized} or that  the inverse of the Newton derivative is bounded from above \cite{hintermuller2010semismooth,ulbrich2011semismooth},
which is a similar assumption to strong convexity since in that case the minimum eigenvalue of the Newton derivative is positive. 
For results on the relationship between strong convexity (SC), PL and QG conditions, we refer to \cite{liao2024error} where in finite dimensions it is shown that SC implies PL which in turn implies QG for weakly convex functions and if additionally $F$ is convex, PL and QG are equivalent but still weaker than SC; to \cite{apidopoulos2022convergence}, where the same is shown in infinite dimensions but with $\psi \equiv 0$; to \cite{bolte2017error} where the equivalence between PL and QG is established in infinite dimensions, but for convex functions; to \cite{jin2023growth} where for nonconvex functions and infinite dimensions it is shown that PL implies QG. Thus, our results are stronger than previously known since we use the weaker assumptions of PL and QG instead of SC as in other works.

Our next assumption is on the semismoothness of $f'$ that is used exclusively to prove superlinear convergence of our algorithm.
\begin{ass}
\label{ass:semismooth}
The Fréchet derivative $f'$ is semismooth (i.e., Newton differentiable)
at a local solution $x^*$ to problem \eqref{eq:opt}  with respect to $H$ (see \eqref{eq:semismoothness_def}). %
\end{ass}

We would like to underline that, despite \cref{ass:main_inequality,ass:PL} being stated above as global assumptions, it is possible to prove some of our results by making these assumptions only locally on some ball $B_R(x^*)$ around a local solution $x^*$. These local counterparts of \cref{ass:main_inequality,ass:PL} are simply obtained via changing $\dom f$ and $\dom F$ to $B_R(x^*)$.

\subsubsection{Further results related to \texorpdfstring{\cref{ass:main_inequality}}{Assumption \ref{ass:main_inequality}}}
Let us briefly discuss some basic results related to \cref{ass:main_inequality}.  We first see how in some cases, \cref{ass:main_inequality} and \eqref{eq:bounded_Hessian} are equivalent, cf. \cref{lem:relation_bdd_hess_and_main_inequality}.
\begin{lem}\label{lem:ass_2_and_4_equiv}
Let $\dom f = \mathcal{H}$, and $f'$ be Lipschitz with constant $L_1$. If \cref{ass:main_inequality} holds, then, \eqref{eq:bounded_Hessian} holds with constant $L+L_1$. %
In fact, in this case, %
\cref{ass:main_inequality} and \eqref{eq:bounded_Hessian} are equivalent.
\end{lem}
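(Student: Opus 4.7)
The proof is essentially a one-line triangle-inequality computation combined with an appeal to the previous lemma, so the plan is short.

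The plan is to unfold \cref{ass:main_inequality} and then add and subtract $f'(y)-f'(x)$ inside the norm, i.e. write
\[
\|H(x)(y-x)\|_* \;\le\; \|f'(y)-f'(x)-H(x)(y-x)\|_* + \|f'(y)-f'(x)\|_* \;\le\; L\|y-x\| + L_1\|y-x\|,
\]
using \cref{ass:main_inequality} for the first term and the Lipschitz assumption on $f'$ for the second. The critical point where $\dom f = \mathcal{H}$ is used is the next step: for any $h \in \mathcal{H}$ with $h \neq 0$, the point $y \coloneqq x+h$ lies in $\dom f$, so the above bound applied with this choice of $y$ yields
\[
\|H(x) h\|_* \;\le\; (L + L_1)\|h\|,
\]
and taking the supremum over nonzero $h$ gives $\|H(x)\|_{\mathrm{op}} \le L + L_1$, which is \eqref{eq:bounded_Hessian} with constant $M = L + L_1$.

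For the equivalence claim, the reverse direction is already established in \cref{lem:relation_bdd_hess_and_main_inequality}(i): if \eqref{eq:bounded_Hessian} holds with constant $M$ and $f'$ is Lipschitz with constant $L_1$, then \cref{ass:main_inequality} holds with constant $M+L_1$. Combining the two directions yields the equivalence. There is no real obstacle here; the only point worth emphasising is why the requirement $\dom f = \mathcal{H}$ is needed, namely to guarantee that every perturbation $x+h$ remains in the domain so that \cref{ass:main_inequality} can be applied, which is what allows us to pass from a pairwise inequality to an operator-norm bound on $H(x)$.
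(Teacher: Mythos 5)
Your proof is correct and takes essentially the same route as the paper: triangle inequality on $H(x)(y-x)$, then \cref{ass:main_inequality} plus Lipschitzness of $f'$, then take $y = x+h$ for arbitrary $h$ (which is where $\dom f = \mathcal{H}$ enters), then cite \cref{lem:relation_bdd_hess_and_main_inequality} for the converse. The only cosmetic difference is that the paper introduces a scaling parameter $t>0$ in $y = x + tz$ before setting $t=1$, a detour you skip — otherwise identical.
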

\begin{proof}%
Using \cref{ass:main_inequality} and the triangle inequality, we obtain   $\norm{H(x)(y-x)}{*} \leq L\norm{x-y}{} + \norm{f'(y)-f'(x)}{*}.$ Taking $y=x+tz$ for arbitrary $z, x \in \HH$ and $t>0$, we further obtain
\[
\norm{H(x)z}{*} \leq L\norm{z}{} + \frac{\norm{f'(x+tz)-f'(x)}{*}}{t}.
\]
Taking $t=1$ and using the Lipschitz continuity of $f'$, we directly obtain $\norm{H(x)z}{*} \leq (L+L_1)\norm{z}{}$. Hence \eqref{eq:bounded_Hessian} is satisfied with $M=L+L_1$.
The equivalence follows from
\cref{lem:relation_bdd_hess_and_main_inequality}, which gives the reverse direction and that \cref{ass:main_inequality} holds with constant $L=M+L_1$.
\end{proof}
We will use the following basic lemma, which holds not only under \cref{ass:main_inequality}, but also when the inequality \eqref{eq:main_inequality} from the assumption holds only on line segments. Below, recall the notation $[x,y]\coloneqq \{tx+(1-t)y :  t \in [0,1]\}$. %
\begin{lem}
\label{lem:upper_bound_f}
Let $x,y\in {\rm dom} f$ and suppose that
\begin{equation}
\label{eq:for_lemma_semismooth_main_ass}
    \norm{f'(\tilde y) -  f'(\tilde x) - H(\tilde x)(\tilde y-\tilde x)}{*} \leq L\norm{\tilde x-\tilde y}{}, \quad \forall \tilde x, \tilde y \in [x,y].
\end{equation}
Then, we have 
    \begin{equation}
    \label{eq:upper_bound_f}
        \left|f(y)-[f(x)+\langle f'(x),y-x \rangle + \frac 12 \langle H(x)(y-x),y-x \rangle] \right| \leq \frac{L}{2} \norm{x-y}{}^2.
    \end{equation}
\end{lem}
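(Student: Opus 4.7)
The plan is to apply the fundamental theorem of calculus along the segment $[x,y]$ to reduce the claim to an integral estimate, and then to invoke the hypothesis \eqref{eq:for_lemma_semismooth_main_ass} pointwise along the segment. Since $f \in C^1(\dom f)$ and $[x,y] \subseteq \dom f$ (implicitly required so that \eqref{eq:for_lemma_semismooth_main_ass} makes sense on the segment), FTC gives
\[
f(y) - f(x) = \int_0^1 \langle f'(x + t(y-x)), y-x \rangle \, dt.
\]

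Next I would rewrite the two correction terms as integrals over $[0,1]$, using $\langle f'(x), y-x\rangle = \int_0^1 \langle f'(x), y-x\rangle \, dt$ and $\frac{1}{2}\langle H(x)(y-x), y-x\rangle = \int_0^1 t\,\langle H(x)(y-x), y-x\rangle \, dt$. Subtracting these from $f(y)-f(x)$ and combining under one integral sign yields
\[
f(y) - f(x) - \langle f'(x), y-x\rangle - \tfrac{1}{2}\langle H(x)(y-x), y-x\rangle
= \int_0^1 \langle f'(x+t(y-x)) - f'(x) - t\,H(x)(y-x),\, y-x\rangle \, dt.
\]

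The key step is then to bound the integrand. By the duality pairing estimate, its absolute value is at most
\[
\|f'(x+t(y-x)) - f'(x) - t\,H(x)(y-x)\|_*\, \|y-x\|.
\]
Applying the hypothesis \eqref{eq:for_lemma_semismooth_main_ass} with $\tilde{x} = x$ and $\tilde{y} = x + t(y-x) \in [x,y]$, so that $\tilde{y} - \tilde{x} = t(y-x)$ and hence $H(\tilde{x})(\tilde{y}-\tilde{x}) = t\,H(x)(y-x)$, we get $\|f'(\tilde y)-f'(\tilde x)-H(\tilde x)(\tilde y-\tilde x)\|_* \leq L t \|y-x\|$. Substituting this bound, taking absolute value inside the integral, and computing $\int_0^1 L t\,\|y-x\|^2\,dt = \tfrac{L}{2}\|y-x\|^2$ yields \eqref{eq:upper_bound_f}.

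There is no substantial obstacle here: the argument is a standard quadratic-upper-bound derivation, adapted to the generalised derivative $H$ instead of a true Hessian. The only subtle point is matching the form of \eqref{eq:for_lemma_semismooth_main_ass} by the correct parameterisation $\tilde{x} = x$, $\tilde{y} = x + t(y-x)$, which exactly produces the $t$ multiplying $H(x)(y-x)$ inside the integrand; this is what makes the constant $\tfrac{L}{2}$ appear rather than $L$.
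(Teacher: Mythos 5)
Your proof is correct and follows essentially the same route as the paper's: represent the remainder as $\int_0^1 \langle f'(x+t(y-x)) - f'(x) - tH(x)(y-x), y-x\rangle\,dt$, bound the integrand via the dual-norm estimate, and invoke \eqref{eq:for_lemma_semismooth_main_ass} with $\tilde x = x$, $\tilde y = x+t(y-x)$ to extract the factor $t$ that produces $L/2$ after integration. The only cosmetic difference is that you spell out the FTC decomposition term by term, whereas the paper writes the integral identity in one step.
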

\begin{proof}
    Indeed,
    \begin{align*}
        &\left|f(y)-[f(x)+\langle f'(x),y-x \rangle + \frac 12 \langle H(x)(y-x),y-x \rangle] \right| = \left|\int_0^1 \langle  f'(x+t(y-x)) -  f'(x) - tH(x)(y-x), y-x \rangle \dt\right|\\
        &\leq \int_0^1 \norm{f'(x+t(y-x)) -  f'(x) - tH(x)(y-x)}{*}\norm{y-x}{} \dt \overset{\eqref{eq:for_lemma_semismooth_main_ass}}{\leq} \int_0^1 t L\norm{y-x}{}^2 \dt = \frac{L}{2} \norm{x-y}{}^2.
    \end{align*}
\end{proof}

\subsection{Regularised Newton step}
Our algorithm relies on the proximal Levenberg--Marquardt Newton step defined for some $x \in {\rm dom} F$ and constant $\lambda \geq 0$ as
\begin{equation}
    \label{eq:PLMSN_step}
    x_+(\lambda,x)\coloneqq \argmin_{y \in \HH} \left\{\widetilde{F}(y;x,\lambda)\coloneqq f(x)+\langle f'(x), y-x \rangle + \frac 12 \langle H(x) (y-x), y-x \rangle + \frac{\lambda}{2}\norm{y-x}{}^2 + \psi(y)\right\}.\tag{$\mathrm{P}_{\mathrm{N}}$}
\end{equation}
For brevity, if it is clear from the context, we use $x_+$ for $x_+(\lambda,x)$. Our assumption about the properties of a solution $x_+$ to the above auxiliary problem is as follows. This assumption is in effect throughout the rest of the paper.
\begin{ass}\label{ass:subproblem}
 For all $\lambda \geq L$ (where L is from \cref{ass:main_inequality}) and all $x \in \dom F$ we can compute a stationary point $x_+ = x_+(\lambda;x) \in \HH$ of the model function $\widetilde{F}(y;x,\lambda)$ in \eqref{eq:PLMSN_step} that satisfies:
\begin{align}
        -f' (x) - H(x) (x_+ - x) -\lambda\mathcal{R}(x_+ - x)  &\in \partial \psi(x_+) &\text{ (first-order condition)}, \label{eq:prox_step_alg_FO}\\
        \widetilde{F}(x_+;x,\lambda) \leq \widetilde{F}(x;x,\lambda)&=F(x) &\text{ (model nonincrease)}. \label{eq:prox_step_alg_decr}
\end{align}
\end{ass}
Note that we do not require finding a global solution to \eqref{eq:PLMSN_step}. Also, it should be possible to relax the exact first-order condition \eqref{eq:prox_step_alg_FO} to allow for some inexactness, but we defer this to future research.
\begin{remark}
\label{rem:model-descent}
    Let us briefly discuss some cases in which \cref{ass:subproblem} holds.%
    \begin{itemize}
        \item If $f$ is convex and $H(x) \succeq 0$ for all $x \in \dom F$, then the minimisation problem in \eqref{eq:PLMSN_step} is strongly convex for any $\lambda > 0$ and, hence, always admits a global unique minimiser $x_+$ that satisfies \eqref{eq:prox_step_alg_FO}--\eqref{eq:prox_step_alg_decr}, independently of the constant $L$ in \cref{ass:main_inequality}. The global minimum can either be found explicitly if $\psi$ is simple or by some auxiliary algorithm for convex optimisation.
        \item If $f$ is $\mu$--weakly convex, i.e., $f + (\mu/2)\|\cdot\|^2$ is convex, and $H(x) \succeq -\mu I$ %
        for some $\mu \geq 0$ and all $x \in \dom F$, then \cref{ass:subproblem} still holds provided the constant $L>0$ in \cref{ass:main_inequality} satisfies $L \geq \mu  $ since in this case the model $\widetilde{F}(\cdot;x,\lambda)$ is convex for all $x \in \dom F$ and $\lambda \geq L \geq \mu$. Thus, again \eqref{eq:PLMSN_step} admits a global minimiser $x_+$ that satisfies \eqref{eq:prox_step_alg_FO}--\eqref{eq:prox_step_alg_decr}.
        The  condition $\mu \leq L$ can always be satisfied since inequality \eqref{eq:main_inequality} holds also for any $\tilde{L}\geq L.$
        \item If the term $\frac12 \langle H(x)(y-x),\, y-x \rangle$ in \eqref{eq:PLMSN_step} is nonconvex (and there is no such $\mu>0$ that $H(x) \succeq -\mu I$), but the non-regularised model $\widetilde{F}(y;x,0)$ is bounded from below for all $x \in \dom F$, then $\widetilde{F}(y;x,\lambda)$ is bounded from below for any $\lambda \geq 0$ and   $x \in \dom F$. In this case, one can run the proximal gradient method (cf.~\cite{beck2017first}) starting from $x$. This method is monotone with respect to $\widetilde{F}$ and guarantees finding an (approximate) stationary point, so we may expect that both \eqref{eq:prox_step_alg_FO} and \eqref{eq:prox_step_alg_decr} also hold in this setting. The model $\widetilde{F}(y;x,0)$ is bounded from below, for example, if $\dom \psi$ is compact or if $\psi$ grows faster than quadratically as $\|x\| \to \infty$.

        \item Both conditions \eqref{eq:prox_step_alg_FO} and \eqref{eq:prox_step_alg_decr} are satisfied if $\widetilde{F}(y;x,\lambda)$ satisfies the PL condition.
    \end{itemize}
    Note that this list is not exhaustive and there may be other situations where \cref{ass:subproblem} is satisfied.
\end{remark} 
To reiterate, not only does \cref{ass:subproblem} cover the convex setting often considered in other works, but also many nonconvex settings. We note also that
\cref{ass:subproblem} is actually needed only on the trajectory of the algorithm.

We next move to several technical results that characterise the properties of the step \eqref{eq:PLMSN_step}.
The first-order condition \eqref{eq:prox_step_alg_FO} means that there exists $\psi'(x_{+}) \in \partial \psi(x_+)$ such that
\begin{equation}
\label{eq:PLMSN_step_optimality}
    \psi'(x_{+}) = - f'(x) - H(x)(x_+ - x)  - \lambda\mathcal{R}(x_+ - x).
\end{equation}
Thus, defining 
\begin{equation}
    \label{eq:F_div_def}
    F'(x_+) \coloneqq  f'(x_+)+\psi'(x_+)  = f'(x_+) - f' (x) - H(x)(x_+ - x)  - \lambda\mathcal{R}(x_+ - x) ,
\end{equation}
we immediately see that $F'(x_+) \in \partial F(x_+)$.\\
The following lemma gives us control over the length of the step via the subgradient norm at a point $x$. It is essentially \cite[Lemma 1]{doikov2024super}, but we give a proof for completeness.
\begin{lem}%
\label{lem:inequality_lemma}
Let $x_+ = x_+(x,\lambda)$ be a stationary point of \eqref{eq:PLMSN_step}, where  $H(x) \succeq \mu I$ for some $\mu \in \mathbb{R}$. Let also $\lambda+\mu>0$. Then,
\begin{equation} 
    \norm{x_+-x}{} \leq  \frac{\norm{F'(x)}{*}}{\lambda + \mu}.\label{eq:rk_leq_gk_over_lambda_k_new}
\end{equation}
\end{lem}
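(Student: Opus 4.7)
The plan is to exploit the optimality condition \eqref{eq:PLMSN_step_optimality} at $x_+$ together with monotonicity of $\partial\psi$ (which follows from convexity of $\psi$). Recall that we can write the subgradient at $x_+$ explicitly as
\[
\psi'(x_+) = -f'(x) - H(x)(x_+ - x) - \lambda\mathcal{R}(x_+ - x) \in \partial\psi(x_+).
\]
For the subgradient on the other side, I pick any $\psi'(x) \in \partial\psi(x)$ (tacitly assuming $\partial\psi(x)$ is nonempty, as is implicit in writing $F'(x) = f'(x) + \psi'(x) \in \partial F(x)$) and use monotonicity of the convex subdifferential:
\[
\langle \psi'(x_+) - \psi'(x),\, x_+ - x\rangle \geq 0.
\]

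Substituting the expression for $\psi'(x_+)$ and grouping $f'(x) + \psi'(x) = F'(x)$ yields
\[
-\langle F'(x),\, x_+ - x\rangle \geq \langle H(x)(x_+ - x),\, x_+ - x\rangle + \lambda\langle \mathcal{R}(x_+ - x),\, x_+ - x\rangle.
\]
The first term on the right is bounded below by $\mu\|x_+ - x\|^2$ thanks to the assumption $H(x) \succeq \mu I$, and the second term equals $\lambda\|x_+ - x\|^2$ by definition of the Riesz map. Combining gives
\[
-\langle F'(x),\, x_+ - x\rangle \geq (\mu + \lambda)\|x_+ - x\|^2.
\]
Applying the duality (Cauchy--Schwarz) bound $-\langle F'(x), x_+-x\rangle \leq \|F'(x)\|_*\|x_+ - x\|$ on the left-hand side and dividing by $(\mu+\lambda)\|x_+ - x\|$ (which is strictly positive whenever $x_+\neq x$; the trivial case $x_+ = x$ is immediate) delivers the claimed bound.

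The argument is essentially a one-liner, so there is no real obstacle: the only subtlety is ensuring the sign convention in monotonicity works out. Since $\psi$ is convex and proper lsc, $\partial\psi$ is a monotone operator in the duality sense $\langle p_1 - p_2, x_1 - x_2\rangle \geq 0$ for $p_i \in \partial\psi(x_i)$, which is exactly what is needed after substituting the expression for $\psi'(x_+)$ from \eqref{eq:PLMSN_step_optimality}.
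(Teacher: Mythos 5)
Your proof is correct and takes essentially the same route as the paper: both use the first-order condition \eqref{eq:PLMSN_step_optimality}, monotonicity of $\partial\psi$, the lower bound $H(x)\succeq\mu I$, and Cauchy--Schwarz, differing only in whether the monotonicity inequality is stated up front or invoked midway through the chain of estimates. The caveat you flag about requiring $\partial\psi(x)\neq\emptyset$ is implicit in the paper's proof as well, and is harmless in context since the lemma is always invoked at iterates for which a specific $\psi'(x_k)\in\partial\psi(x_k)$ has already been computed by the algorithm.
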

\begin{proof}
Rearranging the first-order optimality condition \eqref{eq:PLMSN_step_optimality} and multiplying it by $x_+-x$, we have, setting $r=\norm{x_+-x}{},$
\[  
\langle H(x)(x_+ - x), x_+ - x \rangle = \langle -\psi'(x_{+}) - f'(x), x_+ - x\rangle - \lambda r^2. 
\]
Taking $\psi'(x) \in \partial\psi(x)$, we have by the assumption of the lemma and the monotonicity of $\partial \psi$,
\begin{align*}
\mu r^2=\mu\norm{x_+ - x}{}^2 {\leq} \langle H(x)(x_+ - x), x_+ - x \rangle &\leq \langle -\psi'(x)  -f'(x), x_+ - x\rangle  - \lambda r^2 \\
&\leq \norm{\psi'(x)+f'(x)}{*}r  - \lambda r^2.
\end{align*}
Rearranging, dividing by $r$ and $\lambda +\mu>0$, we get \eqref{eq:rk_leq_gk_over_lambda_k_new}.
\end{proof}
In the next result, we establish a sensitivity estimate of how the point $x_+(\lambda,x)$ varies for fixed $x$ when $\lambda$ changes.
This result will be fundamental for opening the path to show superlinear convergence.
\begin{lem}\label{lem:lambda_dependence}
Let $x \in \dom f$ be such that $H(x) \succeq 0$. Then, the following inequality holds for all $0 < \lambda \leq \lambda'$:
  \begin{equation}
       \|x_+(\lambda,x) - x_+(\lambda',x) \| \leq \frac{\lambda' - \lambda}{\lambda' } \| x -x_+(\lambda,x)\|.\label{eq:lambda_dependence}
  \end{equation}
\end{lem}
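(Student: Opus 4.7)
The plan is to exploit the first-order optimality conditions for both $y \coloneqq x_+(\lambda,x)$ and $y' \coloneqq x_+(\lambda',x)$ given by \cref{ass:subproblem}, together with monotonicity of $\partial\psi$, and then to perform a careful algebraic decomposition that produces the factor $\lambda'$ in the denominator (rather than $\lambda$, which a naive splitting would yield).

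First, I would write \eqref{eq:PLMSN_step_optimality} twice: there exist $\psi'(y) \in \partial\psi(y)$ and $\psi'(y') \in \partial\psi(y')$ such that
\begin{align*}
\psi'(y) &= -f'(x) - H(x)(y-x) - \lambda\mathcal{R}(y-x),\\
\psi'(y') &= -f'(x) - H(x)(y'-x) - \lambda'\mathcal{R}(y'-x).
\end{align*}
Subtracting and pairing with $y-y'$, the convexity of $\psi$ (monotonicity of $\partial\psi$) gives $\langle \psi'(y)-\psi'(y'), y-y' \rangle \geq 0$, which, together with the hypothesis $H(x)\succeq 0$ producing $\langle H(x)(y-y'),y-y'\rangle \geq 0$, yields
\[
0 \;\leq\; \big\langle -\lambda\mathcal{R}(y-x) + \lambda'\mathcal{R}(y'-x),\; y-y' \big\rangle.
\]

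The key step is now the identity
\[
-\lambda(y-x) + \lambda'(y'-x) \;=\; \lambda'(y'-y) \;+\; (\lambda'-\lambda)(y-x),
\]
which I would verify by expansion. Substituting this decomposition into the inequality above (via the Riesz map), the first term contributes $-\lambda'\|y-y'\|^2$, leaving
\[
\lambda'\|y-y'\|^2 \;\leq\; (\lambda'-\lambda)\,\langle \mathcal{R}(y-x), y-y' \rangle \;\leq\; (\lambda'-\lambda)\,\|y-x\|\,\|y-y'\|
\]
by Cauchy--Schwarz. Dividing through by $\lambda'\|y-y'\|$ (the case $y=y'$ being trivial) yields exactly \eqref{eq:lambda_dependence}.

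The only subtle step is choosing the decomposition of $-\lambda(y-x)+\lambda'(y'-x)$: the ``natural'' splitting $\lambda(y'-y) + (\lambda'-\lambda)(y'-x)$ leads to the weaker bound $\|y-y'\| \leq \frac{\lambda'-\lambda}{\lambda}\|y'-x\|$, which is useless for the intended application since it bounds in terms of the step at $\lambda'$ rather than at $\lambda$ and has the wrong denominator. The correct grouping forces the $\lambda'$-term to absorb the full quadratic $\|y-y'\|^2$, giving the sharp constant needed later for the transition to superlinear rates.
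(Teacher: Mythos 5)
Your proof is correct and is essentially the same argument as the paper's: both subtract the first-order optimality conditions, pair with the difference of iterates, invoke monotonicity of $\partial\psi$ together with $H(x)\succeq 0$, and then perform the identical decomposition $\lambda'\mathcal{R}(y'-x)-\lambda\mathcal{R}(y-x)=\lambda'\mathcal{R}(y'-y)+(\lambda'-\lambda)\mathcal{R}(y-x)$ so that the $\lambda'$-term absorbs the full quadratic, followed by Cauchy--Schwarz. Your remark on why the alternative splitting yields the wrong, useless constant is a nice clarifying observation but matches the paper's choice exactly.
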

\begin{proof}
We denote for brevity $x_+(\lambda) \coloneqq  x_+(\lambda, x)$ and $\tilde{f}(y)\coloneqq  f(x)+\langle f'(x), y-x \rangle + \frac 12 \langle H(x) (y-x), y-x \rangle$. Setting $y_1 = x_+(\lambda)$ and $y_2 = x_+(\lambda')$, using \eqref{eq:PLMSN_step_optimality}, we have that
\begin{align*}
    \psi'(y_1) &= -  \tilde{f}'(y_1) - \lambda\mathcal{R}(y_1 - x), \quad
    \psi'(y_2)= -  \tilde{f}'(y_2) -\lambda'\mathcal{R}(y_2 - x),
\end{align*}
where $\psi'(y_1) \in \partial \psi(y_1)$ and $\psi'(y_2) \in \partial \psi(y_2)$. Multiplying the first equality by $y_1-y_2$, the second by $y_2-y_1$, and summing, we arrive at
\begin{align*}
    \langle \psi'(y_1) - \psi'(y_2), y_1-y_2 \rangle &= \langle -\tilde{f}'(y_1) - \lambda\mathcal{R}(y_1 - x), y_1-y_2\rangle + \langle  \tilde{f}'(y_2) + \lambda'\mathcal{R}(y_2 - x), y_1-y_2\rangle\\
    &= \langle  \tilde{f}'(y_2) + \lambda'\mathcal{R}(y_2 - x) -\tilde{f}'(y_1) - \lambda\mathcal{R}(y_1 - x) , y_1-y_2\rangle\\
    &=\langle  H(x)(y_2-y_1)  + \lambda'\mathcal{R}(y_2 - x) - \lambda\mathcal{R}(y_1 - x) , y_1-y_2\rangle,
\end{align*}
where in the last equality we used that %
$\tilde{f}'(y_2)-\tilde{f}'(y_1)=H(x)(y_2-y_1)$. Rearranging, using $H(x) \succeq 0$ and the monotonicity of $\partial \psi(x)$, we obtain
\begin{align*}
       0 
       &\leq\langle \psi'(y_1) - \psi'(y_2), y_1-y_2 \rangle  + \langle H(x)(y_1-y_2), y_1-y_2 \rangle\\
        &= \langle   \lambda'\mathcal{R}(y_2 - x) - \lambda\mathcal{R}(y_1 - x) , y_1-y_2\rangle\\
        &=\langle   \lambda'\mathcal{R}(y_2 - x) - \lambda'\mathcal{R}(y_1 - x) + \lambda'\mathcal{R}(y_1 - x) - \lambda\mathcal{R}(y_1 - x) , y_1-y_2\rangle \\
        &=-\lambda'\norm{y_1-y_2}{}^2+(\lambda'-\lambda) \langle \mathcal{R}(y_1-x),y_1 - y_2 \rangle \leq -\lambda'\norm{y_1-y_2}{}^2 + (\lambda'-\lambda) \norm{y_1-x}{}\norm{y_1 - y_2}{},
    \end{align*}
where for the last inequality we used Cauchy--Schwarz. Rearranging, %
we obtain \eqref{eq:lambda_dependence} as desired.
\end{proof}

The final result of this subsection will allow us to establish per-iteration decrease of $F$ by our algorithm.
\begin{lem}%
\label{lem:inequality_lemma_2}
Consider the update step \eqref{eq:PLMSN_step} and assume that there exists a function $\varphi: \R \to \R$ such that
\begin{equation}
    \label{eq:asmpt_phi}
    \norm{f'(x_+) -  f'(x) - H(x)(x_+-x)}{*} \leq \varphi(\norm{x-x_+}{}).
\end{equation}
Then, 
\begin{equation}
\label{eq:progress_prelim}
         \langle F'(x_+),x - x_+ \rangle
         \geq \frac{1}{2 \lambda}\norm{F'(x_+)}{*}^2 +  \frac{\lambda}{2} \norm{x - x_+}{}^2 - \frac{\varphi(\norm{x - x_+}{})^2}{2\lambda}.
    \end{equation}
\end{lem}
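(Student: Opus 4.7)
The plan is to expand $\langle F'(x_+), x-x_+\rangle$ using the explicit expression for $F'(x_+)$ in \eqref{eq:F_div_def} and then recognize a perfect-square structure that produces $\|F'(x_+)\|_*^2$, leaving only a remainder controlled by $\varphi$.

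First, I would set the shorthand $v\coloneqq x-x_+$ and introduce the ``defect''
\[
\delta \coloneqq f'(x_+) - f'(x) - H(x)(x_+-x),
\]
so that by \eqref{eq:F_div_def} we have $F'(x_+) = \delta + \lambda\mathcal{R}v$. Hypothesis \eqref{eq:asmpt_phi} then reads $\|\delta\|_* \leq \varphi(\|v\|)$.

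Next I would compute the left-hand side by pairing: using $\langle \mathcal{R}v,v\rangle = \|v\|^2$,
\[
\langle F'(x_+), v\rangle = \langle \delta, v\rangle + \lambda \|v\|^2.
\]
In parallel, I would expand the squared dual norm via the Riesz identification, noting $\|\mathcal{R}v\|_*=\|v\|$ and that $\mathcal{H}^*$ carries the induced inner product, to obtain
\[
\|F'(x_+)\|_*^2 = \|\delta + \lambda\mathcal{R}v\|_*^2 = \|\delta\|_*^2 + 2\lambda\langle\delta,v\rangle + \lambda^2\|v\|^2.
\]
Solving this identity for $\langle \delta, v\rangle$ and substituting back into the expression for $\langle F'(x_+),v\rangle$ yields the exact identity
\[
\langle F'(x_+), v\rangle = \frac{1}{2\lambda}\|F'(x_+)\|_*^2 + \frac{\lambda}{2}\|v\|^2 - \frac{1}{2\lambda}\|\delta\|_*^2.
\]

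Finally, I would apply the bound $\|\delta\|_*^2 \leq \varphi(\|v\|)^2$ coming from \eqref{eq:asmpt_phi} to replace the last term by $-\varphi(\|v\|)^2/(2\lambda)$, which produces exactly \eqref{eq:progress_prelim}. There is no real obstacle here: the lemma is a one-line consequence of the polarisation identity in $\mathcal{H}^*$, and the only point requiring minor care is the bookkeeping between $\mathcal{R}v$ and $v$ under the duality pairing so that the $2\lambda\langle\delta,v\rangle$ cross term produced by expanding $\|F'(x_+)\|_*^2$ cancels cleanly against the linear term in the pairing identity.
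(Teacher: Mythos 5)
Your proof is correct and amounts to the same computation as the paper's: both expand the same square $\norm{\delta + \lambda\mathcal{R}v}{*}^2$ using the Riesz identification of $\HH^*$ with $\HH$, and then rearrange. The paper starts from $\varphi(\norm{v}{})^2 \geq \norm{\delta}{*}^2 = \norm{F'(x_+) + \lambda\mathcal{R}(x_+-x)}{*}^2$ and divides by $2\lambda$ directly, so the cross term $2\lambda\langle F'(x_+),x_+-x\rangle$ appears immediately and no separate computation of $\langle F'(x_+),v\rangle$ is needed; your version isolates $\langle\delta,v\rangle$ from the expansion and substitutes, which is a slightly longer route through the same algebraic identity.
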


\begin{proof}
We have that
\begin{align*}
\varphi(\norm{x - x_+}{})^2 \overset{\eqref{eq:asmpt_phi}}&{\geq} \norm{f'(x_+) -  f'(x) - H(x)(x_+-x)}{*}^2 \overset{\eqref{eq:F_div_def}}{=} \norm{F'(x_+) + \lambda \mathcal{R}(x_+ - x)}{*}^2 \\
&= \norm{F'(x_+)}{*}^2 + 2\lambda\langle F'(x_+),x_+ - x \rangle + \lambda^2\norm{x - x_+}{}^2.
\end{align*} 
Dividing by $2 \lambda$ and rearranging, we obtain the desired inequality \eqref{eq:progress_prelim}.
\end{proof}

\section{The proposed algorithm}
\label{sec:algorithm} 
In this section, we present \algname{LeAP-SSN} --- our  Levenberg--Marquardt Adaptive Proximal Semismooth Newton method.
The algorithm is based on the regularised proximal step \eqref{eq:PLMSN_step} and in each iteration we use a simple linesearch to adaptively find the regularisation parameter $\lambda$ that guarantees sufficient progress of the algorithm. The resulting algorithm is listed as \cref{alg:proximal_newton} below. %
\begin{algorithm}[H]
\caption{\algname{LeAP-SSN} (\underline{Le}venberg--Marquardt \underline{A}daptive \underline{P}roximal \underline{S}emi\underline{s}mooth \underline{N}ewton method)}\label{alg:proximal_newton}
\begin{algorithmic}[1]
\State \textbf{input:} Parameters $\alpha \in (0,1/2]$, $\beta \in (0,(m-1)/(2m)]$ for some $m \geq 1$ (standard choice: $\alpha=1/2$, $\beta=1/4$, $m=2$), $x_0\in \dom F$, $\psi'(x_0) \in \partial \psi(x_0)$, and $\Lambda_0 >0$.  \label{step:input} 
\State \textbf{for} $k=0,1,\ldots,$ \textbf{do} \label{step:outer_loop}
\State \hspace{0.7cm} \textbf{for} $j_k = 0,1,\ldots$, \textbf{do} \label{step:inner_loop}
\State \hspace{1.4cm} Set $\lambda=2^{j_k}\Lambda_k$. \label{step:main_step}
\State \hspace{1.4cm} Attempt to compute
\begin{align}
        \qquad
        x_+ = \argmin_{y \in \mathcal{H}} \left\{f(x_k)+\langle f'(x_k), y-x_k \rangle + \frac 12 \langle H(x_k) (y-x_k), y-x_k \rangle + \frac{\lambda}{2}\norm{y-x_k}{}^2 + \psi(y)\right\}\label{eq:prox_step_alg}
\end{align}
\hspace{1.4cm} in the sense that $x_+$ satisfies \cref{ass:subproblem}. %
If not possible, go to next iteration of loop (\cref{step:inner_loop}). 
\State \hspace{1.4cm} Set $\psi'(x_+) = -f' (x_k) - H(x_k) (x_+ - x_k) -\lambda\mathcal{R}(x_+ - x_k) $. 
\State \hspace{1.4cm} Set $F'(x_+) = f' (x_+) + \psi'(x_+)$.
\State \hspace{1.4cm} \textbf{if} $x_+$ satisfies the acceptance conditions
\vspace{-3mm}
\begin{align}
\qquad \langle F'(x_+),x_k -x_+\rangle \geq \frac{\alpha}{\lambda}\norm{F'(x_+)}{*}^2 \quad\text{and}\quad
    F(x_k)-F(x_+)\geq \beta\lambda \norm{x_+-x_k}{}^2\label{eq:stop_cond_alg}
\end{align}
\vspace{-4mm}
\label{step:acceptance}
\State \hspace{2.1cm} Set $x_{k+1} = x_+$, $\lambda_k = \lambda=2^{j_k}\Lambda_k$, $\Lambda_{k+1}=\lambda_k/2 = 2^{j_k}\Lambda_k/2$.\label{step:k+1_update}
\State \hspace{2.1cm} Set $\psi'(x_{k+1})= \psi'(x_+)$, $F'(x_{k+1}) = f'(x_{k+1}) + \psi'(x_{k+1})$.
\State \hspace{2.1cm} \textbf{break} and go to next iteration of the outer loop (\cref{step:outer_loop}).
\State \hspace{1.4cm} \textbf{end if}
\State \hspace{0.7cm} \textbf{end for}
\State \textbf{end for}
\end{algorithmic}
\end{algorithm}

\begin{remark}[Solving \eqref{eq:prox_step_alg}]
Note that when $\psi \equiv 0$,  \eqref{eq:prox_step_alg} simplifies to computing
\[
x_+ = x_k-(H(x_k) + \lambda\mathcal{R})^{-1}f'(x_k).
\]
In other words, potentially after a discretisation, one requires a single linear system solve. This $x_+$ satisfies model nonincrease \eqref{eq:prox_step_alg_decr} under the various conditions discussed in \cref{rem:model-descent}.

When $\psi \ne 0$, $x_+$ can be expressed via the proximal operator of $\psi$ which in many cases is known explicitly \cite{chierchia2020proximity}. Alternatively, one may need to run a proximal gradient descent or another auxiliary optimisation method to compute $x_+$.
\end{remark}
\begin{remark}[The Riesz map and choice of norm]
\label{rem:Riesz}
For many problems considered in the context of finite-dimensional optimisation (like the example in \cref{sec:examples:svc}), the Riesz map $\mathcal{R}$ is the identity matrix $\mathcal{R} = I$ and $\norm{\cdot}{} = \norm{\cdot}{*} = \norm{\cdot}{\ell^2}$ are the Euclidean norm. However, in discretisations of infinite-dimensional problems, the Riesz map often takes the form of a matrix $A$ which is a combination of appropriate problem-dependent discretised operators. See \cref{sec:examples:signorini,sec:examples:image-restoration} for examples. In this case one defines $\norm{\cdot}{} = \norm{\cdot}{A}$ and $\norm{\cdot}{*} = \norm{\cdot}{A^{-1}}$. This ensures the correct scaling of the solution coefficient vectors to their respective representations as functions and often vastly improves solver performance \cite{schwedes2017mesh}.
\end{remark}

As we will show later, when proving superlinear convergence under semismoothness, we obtain $\lambda_k\to 0$, which means that our algorithm in a sense automatically interpolates between regularised proximal SSN and pure proximal SSN, and becomes closer to pure proximal SSN when $k\to \infty$. 

Our next goal is to show that the inner cycle to find an appropriate $\lambda_k$ is finite  . To that end, recall that we use $x_+(\lambda,x_k)$ to denote a solution to \eqref{eq:PLMSN_step} with $x=x_k$ and that  solution satisfies \eqref{eq:prox_step_alg_FO}--\eqref{eq:prox_step_alg_decr} of \cref{ass:subproblem}.
\begin{lem}[Acceptance and termination of the inner loop]
\label{lem:acceptance_inner_nonconv} 
Consider iteration $k \geq 0$ of \cref{alg:proximal_newton} applied to \eqref{eq:opt}. Let \cref{ass:basic,ass:subproblem}  hold and suppose that the following inequality holds:
\begin{equation}
        \norm{f'(\tilde y) -  f'(\tilde x) - H(\tilde x)(\tilde y-\tilde x)}{*} \leq L\norm{\tilde x-\tilde y}{}, \quad \forall \tilde x, \tilde y \in [x_k,x_+(\lambda,x_k)], \quad \forall \lambda\geq0.\label{eq:acc_lemma_ass_main_inequality}
\end{equation}
Pick $m \geq 1$, $0 < \alpha \leq 1\slash 2$, and $0<\beta \leq (m-1)\slash (2m)$.   Then, if $\lambda_k \geq mL$, both inequalities in \eqref{eq:stop_cond_alg} hold, i.e., the acceptance conditions of the inner loop of \cref{alg:proximal_newton} holds and the inner loop ends after a finite number of trials.
Furthermore, we have 
\begin{equation}
    \label{eq:lambda_bound}
    \lambda_k\leq \overline{\lambda}\coloneqq \max\{2mL,\Lambda_0\}, \quad \Lambda_{k+1}\leq \overline{\lambda}/2, \quad \forall k \geq 0.
\end{equation}
\end{lem}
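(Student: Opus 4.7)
The strategy is to verify the two acceptance inequalities in \eqref{eq:stop_cond_alg} separately once $\lambda$ is large enough, then combine with the doubling schedule to bound $\lambda_k$. For the first inequality, the plan is to invoke \cref{lem:inequality_lemma_2} applied with $x=x_k$, $x_+=x_+(\lambda,x_k)$, and $\varphi(r)=Lr$, where the hypothesis \eqref{eq:asmpt_phi} is supplied precisely by the segment-wise estimate \eqref{eq:acc_lemma_ass_main_inequality} (taking $\tilde x=x_k$, $\tilde y=x_+$). Inequality \eqref{eq:progress_prelim} then reads
\[
\langle F'(x_+), x_k - x_+ \rangle \;\geq\; \frac{1}{2\lambda}\norm{F'(x_+)}{*}^2 + \frac{\lambda^2 - L^2}{2\lambda}\norm{x_k - x_+}{}^2,
\]
so choosing $\lambda \geq L$ makes the second term nonnegative and $\alpha \leq 1/2$ yields the first condition in \eqref{eq:stop_cond_alg}.

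For the second (model-descent) inequality, I would combine \cref{lem:upper_bound_f} with the model-nonincrease property \eqref{eq:prox_step_alg_decr} supplied by \cref{ass:subproblem}. Applying \cref{lem:upper_bound_f} on the segment $[x_k,x_+]$ (which is exactly what \eqref{eq:acc_lemma_ass_main_inequality} guarantees) provides
\[
f(x_+) \leq f(x_k) + \langle f'(x_k), x_+ - x_k \rangle + \tfrac{1}{2}\langle H(x_k)(x_+ - x_k), x_+ - x_k \rangle + \tfrac{L}{2}\norm{x_+-x_k}{}^2,
\]
while \eqref{eq:prox_step_alg_decr} gives an upper bound on the first three right-hand-side terms in terms of $F(x_k) - \psi(x_+) - \tfrac{\lambda}{2}\norm{x_+-x_k}{}^2$. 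Adding $\psi(x_+)$ to both sides and rearranging produces $F(x_k) - F(x_+) \geq \tfrac{\lambda - L}{2}\norm{x_+-x_k}{}^2$, so the second condition in \eqref{eq:stop_cond_alg} holds whenever $\lambda(1-2\beta) \geq L$; the hypothesis $\beta \leq (m-1)/(2m)$ makes $1-2\beta \geq 1/m$, so $\lambda \geq mL$ is enough.

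Thus, as soon as $\lambda=2^{j_k}\Lambda_k \geq mL$, both conditions are simultaneously satisfied (and, being $\geq L$, \cref{ass:subproblem} guarantees that the subproblem admits a suitable $x_+$, so line~5 of the algorithm succeeds), and since the multiplicative doubling makes $2^{j_k}\Lambda_k$ eventually exceed $mL$, the inner loop terminates after finitely many trials.

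Finally, I would establish \eqref{eq:lambda_bound} by induction on $k$. The inductive hypothesis is that $\Lambda_k \leq \overline\lambda = \max\{2mL,\Lambda_0\}$, which is immediate for $k=0$. Two cases arise: if $\Lambda_k \geq mL$, then $j_k = 0$ is already accepted, so $\lambda_k = \Lambda_k \leq \overline\lambda$ and $\Lambda_{k+1} = \lambda_k/2 \leq \overline\lambda/2$; if $\Lambda_k < mL$, then $j_k$ is the smallest index with $2^{j_k}\Lambda_k \geq mL$, which forces $2^{j_k}\Lambda_k < 2mL$ and hence $\lambda_k < 2mL \leq \overline\lambda$ and $\Lambda_{k+1} = \lambda_k/2 < mL \leq \overline\lambda/2$. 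This closes the induction and yields both bounds in \eqref{eq:lambda_bound}. The only delicate point is making sure the segment hypothesis \eqref{eq:acc_lemma_ass_main_inequality} is usable: since it is assumed for every $\lambda\geq 0$ along the segment $[x_k,x_+(\lambda,x_k)]$, both \cref{lem:inequality_lemma_2} and \cref{lem:upper_bound_f} apply verbatim for each trial $\lambda$, and no further regularity of $f$ is needed beyond \cref{ass:basic}.
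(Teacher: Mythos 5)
Your proposal is correct and takes essentially the same route as the paper: the first acceptance inequality via \cref{lem:inequality_lemma_2} with $\varphi(r)=Lr$, the second via \cref{lem:upper_bound_f} combined with the model-nonincrease inequality \eqref{eq:prox_step_alg_decr}, and the bound \eqref{eq:lambda_bound} by induction on $k$ with the case split $j_k=0$ versus $j_k>0$ (you phrase the split as $\Lambda_k\geq mL$ versus $\Lambda_k<mL$, but the content is identical). One tiny imprecision: in your second case you assert that ``$j_k$ \emph{is} the smallest index with $2^{j_k}\Lambda_k\geq mL$'' --- it is only \emph{at most} that index, since acceptance may occur at a smaller $\lambda$; but the conclusion $\lambda_k<2mL$ holds either way, so this does not affect the argument.
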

Before we start the proof, let us remark that \eqref{eq:acc_lemma_ass_main_inequality} is of course implied by \cref{ass:main_inequality}.  We formulated the result with the localised version \eqref{eq:acc_lemma_ass_main_inequality} because we will need it for our local convergence results in a later section.
\begin{proof}[Proof of \cref{lem:acceptance_inner_nonconv}]
By \cref{ass:subproblem} (which is applicable since we assumed $\lambda_k \geq mL \geq L$), the subproblem~\eqref{eq:PLMSN_step} is well defined in the sense that it admits a stationary point  satisfying~\eqref{eq:prox_step_alg_FO}--\eqref{eq:prox_step_alg_decr}. 
By the assumption \eqref{eq:acc_lemma_ass_main_inequality} %
we have that in particular, the inequality \eqref{eq:asmpt_phi} holds with $x_+(\lambda,x_k)$ and $x_k$ instead of $x_+$ and $x$ respectively, and $\varphi(r)=Lr$. Thus, applying \cref{lem:inequality_lemma_2}, we obtain, for any $\lambda \geq mL \geq L$,
\begin{align*}
\langle F'(x_+(\lambda,x_k)),x_k - x_+(\lambda,x_k) \rangle
         \overset{\eqref{eq:progress_prelim}}&{\geq} \frac{1}{2 \lambda}\norm{F'(x_+(\lambda,x_k))}{*}^2 +  \frac{\lambda}{2} \norm{x_k - x_+(\lambda,x_k)}{}^2 - \frac{L^2\norm{x_k - x_+(\lambda,x_k)}{}^2}{2\lambda} \\
         &\geq \frac{1}{2 \lambda}\norm{F'(x_+(\lambda,x_k))}{*}^2,
\end{align*} 
where in the last inequality we used that $\lambda\geq mL \geq L$.
Thus, since $\lambda_k=2^{j_k}\Lambda_k$ is increasing when $j_k$ is increasing, we obtain that the first inequality in \eqref{eq:stop_cond_alg} holds after a finite number of trials when $\lambda_k\geq mL$ since $\alpha\leq 1/2$ by the assumptions of the lemma. 

We now move to the second inequality in \eqref{eq:stop_cond_alg}. 
By the assumption \eqref{eq:acc_lemma_ass_main_inequality}, %
we can apply \cref{lem:upper_bound_f} and obtain, for any $\lambda \geq mL $,
\begin{align*}
    F(x_+(\lambda,x_k)) &= f(x_+(\lambda,x_k)) + \psi(x_+(\lambda,x_k)) \nonumber\\
    \overset{\eqref{eq:upper_bound_f}}&{\leq} f(x_k)+\langle f'(x_k), x_+(\lambda,x_k)-x_k \rangle + \frac 12 \langle H(x_k) (x_+(\lambda,x_k)-x_k), x_+(\lambda,x_k)-x_k \rangle \nonumber\\
    & \hspace{1em}+ \frac{L}{2}\norm{x_+(\lambda,x_k)-x_k}{}^2  + \psi(x_+(\lambda,x_k))\nonumber\\
    \overset{\eqref{eq:PLMSN_step}}&{=}\widetilde{F}(x_+(\lambda,x_k);x_k,\lambda)-\frac{\lambda}{2}\norm{x_+(\lambda,x_k)-x_k}{}^2+ \frac{L}{2}\norm{x_+(\lambda,x_k)-x_k}{}^2 \nonumber\\
    \overset{\eqref{eq:prox_step_alg_decr}}&{\leq} F(x_k)+ \frac{L-\lambda}{2}\norm{x_+(\lambda,x_k)-x_k}{}^2 \nonumber\\
    \overset{L\leq \lambda/m}&{\leq}F(x_k)-  \frac{m-1}{2m}\lambda\norm{x_+(\lambda,x_k)-x_k}{}^2.%
\end{align*}
Thus, since $\lambda_k=2^{j_k}\Lambda_k$ is increasing when $j_k$ is increasing, by rearranging the last inequality, we obtain that the second inequality in \eqref{eq:stop_cond_alg} holds after a finite number of trials when $\lambda_k\geq mL$ since $\beta\leq \frac{m-1}{2m}$  by the assumptions of the lemma. 
Thus, both inequalities in \eqref{eq:stop_cond_alg} hold after a finite number of trials, and the step of \cref{alg:proximal_newton} is well defined. 

We now move to the proof of the first inequality in \eqref{eq:lambda_bound} by induction. Consider the base case, i.e., iteration $k=0$. If $j_0=0$, i.e., the acceptance condition holds for $\lambda_0=\Lambda_0$, we have that $\lambda_0\leq \max\{2mL,\Lambda_0\}=\overline{\lambda}$. If $j_0>0$, we in any case have that $\lambda_0 = \Lambda_0 2^{j_0}\leq 2mL$ since otherwise $\lambda_0/2 >mL$ would have already been accepted by the arguments in the first part of this lemma. Thus, again $\lambda_0\leq \max\{2mL,\Lambda_0\}=\overline{\lambda}$. Consider now iteration $k>0$ and proceed in the same way. If $j_k=0$, i.e., the acceptance condition holds for $\lambda_k=\Lambda_k$, we have by the induction hypothesis that $\lambda_k= \Lambda_k= \lambda_{k-1}/2 \leq\overline{\lambda}/2\leq \overline{\lambda}$. If $j_k>0$, we in any case have that $\lambda_k\leq 2mL$ since otherwise the previous iterate already satisfies $\lambda_k/2>mL$, which is sufficient for the acceptance conditions to hold by the first part of this lemma. Thus, again $\lambda_k\leq \overline{\lambda}$. The second inequality in \eqref{eq:lambda_bound} just follows by definition, since for any $k \in \mathbb{N}$, $\Lambda_{k+1}=\lambda_k/2\leq\overline{\lambda}/2$. 
\end{proof}
\subsection{Useful notation and inequalities}

Before we move to the main results, we introduce some technical preliminaries which are used throughout the rest of the paper. To simplify the derivations, we introduce the following notation for $k\geq 0$:
\begin{equation}
\label{eq:F_r_g_notaion}
    F_k\coloneqq F(x_k)-F^*, \qquad r_k\coloneqq \norm{x_k-x_{k+1}}{}, \qquad g_k\coloneqq \norm{F'(x_k)}{*}.
\end{equation}
Then, using the definition of the point $x_{k+1}$ in \cref{step:k+1_update} of \cref{alg:proximal_newton}, the acceptance conditions \eqref{eq:stop_cond_alg} can be written as 
\begin{align}   
\langle F'(x_{k+1}),x_k -x_{k+1}\rangle &\geq \frac{\alpha}{\lambda_k}g_{k+1}^2, \label{eq:stop_cond_alg_new_1} \\
    F_k-F_{k+1}=F(x_k)-F(x_{k+1})&\geq \beta\lambda_kr_k^2.\label{eq:stop_cond_alg_new_2}
\end{align}
We will use the following implications of the acceptance conditions \eqref{eq:stop_cond_alg}.
\begin{lem}\label{lem:functionValuesGradientRelation}
Let, at iteration $k\geq 0 $ of \cref{alg:proximal_newton}, acceptance criteria \eqref{eq:stop_cond_alg} hold. Then, we have 
    \begin{align}
    F^*\leq F(x_{k+1})\leq F(x_k) &\qquad\text{or equivalently}\qquad 0 \leq F_{k+1}\leq F_k, \label{eq:F_k_decr}\\
    \norm{F'(x_{k+1})}{*} \leq \frac{\lambda_k}{\alpha}\norm{x_k-x_{k+1}}{} &\qquad\text{or equivalently}\qquad g_{k+1} \leq \frac{\lambda_kr_k}{\alpha},\label{eq:g_k_leq_r_k}\\
    F(x_k)-F(x_{k+1}) \geq \frac{\beta\alpha^2}{\lambda_k}\norm{F'(x_{k+1})}{*}^2 &\qquad\text{or equivalently}\qquad F_k-F_{k+1}=F(x_k)-F(x_{k+1}) \geq \frac{\beta\alpha^2}{\lambda_k}g_{k+1}^2.\label{eq:functionValuesGradientRelation}
\end{align}
\end{lem}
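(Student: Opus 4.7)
The plan is straightforward: all three statements follow by elementary manipulations of the two acceptance inequalities \eqref{eq:stop_cond_alg_new_1}--\eqref{eq:stop_cond_alg_new_2}, combined with Cauchy--Schwarz. I expect no real obstacle here; the result is essentially a bookkeeping lemma that repackages the acceptance criteria into forms that will be more convenient in the convergence analyses of the next sections.

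First I would establish \eqref{eq:F_k_decr}. The upper bound $F(x_{k+1}) \leq F(x_k)$ is immediate from \eqref{eq:stop_cond_alg_new_2}, since $\beta\lambda_k r_k^2 \geq 0$. The lower bound $F^* \leq F(x_{k+1})$ is just the definition of $F^*$ as the global minimum value from \cref{ass:basic}(iii), and the ``equivalently'' reformulation follows by subtracting $F^*$ throughout and recalling the notation $F_k = F(x_k)-F^*$ from \eqref{eq:F_r_g_notaion}.

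Next I would prove \eqref{eq:g_k_leq_r_k}. By Cauchy--Schwarz applied to the duality pairing,
\[
\langle F'(x_{k+1}),\, x_k - x_{k+1}\rangle \;\leq\; \|F'(x_{k+1})\|_* \,\|x_k - x_{k+1}\| \;=\; g_{k+1}\, r_k.
\]
Combining this with the first acceptance condition \eqref{eq:stop_cond_alg_new_1} yields
\[
\tfrac{\alpha}{\lambda_k}\, g_{k+1}^2 \;\leq\; g_{k+1}\, r_k.
\]
If $g_{k+1}=0$, the claimed inequality is trivial; otherwise, dividing by $g_{k+1}>0$ and rearranging gives $g_{k+1} \leq \lambda_k r_k/\alpha$.

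Finally, for \eqref{eq:functionValuesGradientRelation} I would just chain \eqref{eq:g_k_leq_r_k} into \eqref{eq:stop_cond_alg_new_2}. From \eqref{eq:g_k_leq_r_k} we have $r_k \geq \alpha g_{k+1}/\lambda_k$, so squaring and multiplying by $\beta\lambda_k$,
\[
F_k - F_{k+1} \;\geq\; \beta\lambda_k r_k^2 \;\geq\; \beta\lambda_k \cdot \frac{\alpha^2 g_{k+1}^2}{\lambda_k^2} \;=\; \frac{\beta\alpha^2}{\lambda_k}\, g_{k+1}^2,
\]
which is exactly the claimed bound. The equivalent form using $F(x_k)-F(x_{k+1})$ is immediate since the $F^*$ terms cancel.
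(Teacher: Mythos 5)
Your proof is correct and follows the same route as the paper's: \eqref{eq:F_k_decr} from the second acceptance condition plus the definition of $F^*$, \eqref{eq:g_k_leq_r_k} from Cauchy--Schwarz on the first acceptance condition, and \eqref{eq:functionValuesGradientRelation} by substituting \eqref{eq:g_k_leq_r_k} into the second acceptance condition. The only minor difference is that you explicitly treat the trivial case $g_{k+1}=0$ before dividing, which the paper leaves implicit.
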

\begin{proof}
The inequalities in \eqref{eq:F_k_decr} follow from \eqref{eq:stop_cond_alg_new_2}, \eqref{eq:F_r_g_notaion}, and \cref{ass:basic}. 
Applying Cauchy--Schwarz on the first inequality in \eqref{eq:stop_cond_alg} and using the definition of $x_{k+1}$ in \cref{step:k+1_update} of \cref{alg:proximal_newton} gives
\[
\frac{\alpha}{\lambda_k}\norm{F'(x_{k+1})}{*} \leq \norm{x_k-x_{k+1}}{},
\]
which together with \eqref{eq:F_r_g_notaion} clearly implies \eqref{eq:g_k_leq_r_k}.
Further, after plugging the previous inequality into the second inequality in \eqref{eq:stop_cond_alg} and using again the definition of $x_{k+1}$ in \cref{step:k+1_update} of \cref{alg:proximal_newton}, we get 
\begin{align*}
    F(x_k)-F(x_{k+1}) &\geq \beta \lambda_k \norm{x_k-x_{k+1}}{}^2 \geq \frac{\beta\alpha^2}{\lambda_k}\norm{F'(x_{k+1})}{*}^2,
\end{align*}
and \eqref{eq:functionValuesGradientRelation} follows by \eqref{eq:F_r_g_notaion}.
\end{proof}
\begin{lem}\label{lem:rk_identity_new} 
Given a set $U\subseteq \HH$,  suppose there exists $F_U^* \in \mathbb{R}$ such that $F(x) \geq F_U^*$  for all $x\in U$.
\begin{enumerate}[label=(\roman*)]\itemsep=0em
    \item At iteration $k\geq 0$ of \cref{alg:proximal_newton}, let the PL inequality hold at $x_{k} \in U$, i.e., 
    \begin{equation}
    \label{eq:PL_at_point}
    \frac{1}{2\mu} {\rm dist}(0, \partial F(x_{k}))^2 \geq F(x_{k})-F_U^*.%
    \end{equation}
    Then, we have
    \begin{align}
    g_{k}^2 \coloneqq  \norm{F'(x_{k})}{*}^2 &\geq 2\mu (F(x_{k})-F_U^*). \label{eq:F_k_g_k_bound} 
    \end{align}  
    \item At iteration $k+1\geq 1 $ of \cref{alg:proximal_newton}, let the assumption of the previous item hold and the acceptance criteria \eqref{eq:stop_cond_alg} hold. Let in addition $x_{k+2} \in U$. Then, we have   
    \begin{align}
    r_{k+1}^2 &\leq \frac{\lambda_k r_k^2}{ \alpha^2 \beta \mu}, \label{eq:rk_identity_new} \\
     g_{k+2}^2 & \leq \frac{\lambda_{k+1} g_{k+1}^2}{2\alpha^2\beta\mu}. \label{eq:gk_identity_new}
    \end{align} 
\end{enumerate}
\end{lem}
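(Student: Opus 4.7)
\emph{Proof plan for \cref{lem:rk_identity_new}.}

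Part (i) is essentially definitional. By construction in \cref{alg:proximal_newton} (\cref{step:k+1_update} together with the first-order condition on $\psi'(x_{k+1})$), $F'(x_k) = f'(x_k) + \psi'(x_k)$ with $\psi'(x_k) \in \partial \psi(x_k)$, so $F'(x_k) \in \partial F(x_k)$ by the sum rule recalled in the remark after \cref{ass:basic}. Hence $\|F'(x_k)\|_* \geq \dist(0, \partial F(x_k))$, and squaring and inserting into \eqref{eq:PL_at_point} gives \eqref{eq:F_k_g_k_bound}.

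For part (ii), my plan is to chain three already-established ingredients, taking the assumption of (i) to hold at iteration $k+1$ (i.e., PL at $x_{k+1}\in U$, so that \eqref{eq:F_k_g_k_bound} reads $g_{k+1}^2 \geq 2\mu(F(x_{k+1}) - F_U^*)$). First, the acceptance condition \eqref{eq:stop_cond_alg_new_2} at iteration $k+1$ together with $x_{k+2}\in U$ yields
\[
\beta \lambda_{k+1} r_{k+1}^2 \leq F(x_{k+1}) - F(x_{k+2}) \leq F(x_{k+1}) - F_U^*,
\]
and the shifted PL bound then gives the key intermediate estimate $r_{k+1}^2 \leq g_{k+1}^2/(2\mu\beta\lambda_{k+1})$. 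Second, \eqref{eq:g_k_leq_r_k} applied at iteration $k$ gives $g_{k+1}^2 \leq \lambda_k^2 r_k^2/\alpha^2$. Combining the two produces
\[
r_{k+1}^2 \leq \frac{\lambda_k^2 r_k^2}{2\mu\beta\lambda_{k+1}\alpha^2}.
\]

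The final step toward \eqref{eq:rk_identity_new} is to dispose of the ratio $\lambda_k^2/\lambda_{k+1}$. This is where the algorithm's regularisation update is essential: \cref{step:k+1_update} sets $\Lambda_{k+1}=\lambda_k/2$, and $\lambda_{k+1}=2^{j_{k+1}}\Lambda_{k+1} \geq \Lambda_{k+1}$, so $\lambda_k \leq 2\lambda_{k+1}$ and hence $\lambda_k^2/\lambda_{k+1} \leq 2\lambda_k$. Substituting gives exactly \eqref{eq:rk_identity_new}. For \eqref{eq:gk_identity_new}, I would square \eqref{eq:g_k_leq_r_k} applied at iteration $k+1$, namely $g_{k+2}^2 \leq \lambda_{k+1}^2 r_{k+1}^2/\alpha^2$, and plug in the intermediate bound $r_{k+1}^2 \leq g_{k+1}^2/(2\mu\beta\lambda_{k+1})$ from the previous paragraph; one $\lambda_{k+1}$ factor cancels and the claim follows directly without needing the $\lambda_k \leq 2\lambda_{k+1}$ trick.

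The only non-bookkeeping point is the cancellation of $\lambda_k^2/\lambda_{k+1}$ in the first estimate, and the algorithm's halving rule is designed precisely for this; the rest is a straightforward combination of \cref{lem:functionValuesGradientRelation}, the PL inequality, and the monotonicity of $F$ along the iterates restricted to $U$.
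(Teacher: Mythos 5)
Your proof is correct and follows essentially the same route as the paper's. You establish the intermediate estimate $r_{k+1}^2 \leq g_{k+1}^2/(2\mu\beta\lambda_{k+1})$ from the acceptance condition at step $k+1$, lower-boundedness on $U$, and the shifted PL bound, then combine with $g_{k+1}^2 \leq \lambda_k^2 r_k^2/\alpha^2$ and the halving rule $\lambda_k = 2\Lambda_{k+1} \leq 2\lambda_{k+1}$ to cancel the $\lambda_k^2/\lambda_{k+1}$ factor, and obtain \eqref{eq:gk_identity_new} by applying \eqref{eq:g_k_leq_r_k} one more step forward — this is exactly the chain of inequalities in the paper's proof, merely presented in two stages rather than one continuous chain.
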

\begin{proof}
Since by construction $F'(x_{k}) \in \partial F(x_{k})$, we have that
\begin{equation*}
     g_{k}^2 \overset{\eqref{eq:F_r_g_notaion}}{\coloneqq } \norm{F'(x_{k})}{*}^2 \geq  {\rm dist}(0, \partial F(x_{k}))^2 \overset{\eqref{eq:PL_at_point}}{\geq}   2\mu (F(x_{k})-F_U^*),   
\end{equation*}
which is \eqref{eq:F_k_g_k_bound}. 
By \eqref{eq:stop_cond_alg_new_2} at iteration $k+1$, inequality $F(x_{k+2}) - F_U^* \geq 0$ (which follows from $x_{k+2} \in U$), inequality \eqref{eq:F_k_g_k_bound} at iteration $k+1$, and inequality \eqref{eq:g_k_leq_r_k}, we observe that
\begin{align}
\label{eq:F_k_g_k_bound_proof_1}
    \beta \lambda_{k+1} r_{k+1}^2 \overset{\eqref{eq:stop_cond_alg_new_2}}{\leq} (F(x_{k+1})- F_U^*)-(F(x_{k+2})- F_U^*) \leq F(x_{k+1})- F_U^* \overset{\eqref{eq:F_k_g_k_bound}}{\leq} \frac{g_{k+1}^2}{2\mu} \overset{\eqref{eq:g_k_leq_r_k}}{\leq} \frac{\lambda_k^2 r_k^2}{2\alpha^2 \mu}.
\end{align}
Using that $ \lambda_{k}/2 = \Lambda_{k+1} \leq \lambda_{k+1}$ by \cref{step:k+1_update} of \cref{alg:proximal_newton}, we obtain  \eqref{eq:rk_identity_new}. To obtain  \eqref{eq:gk_identity_new}, we use \eqref{eq:g_k_leq_r_k} at iteration $k+1$ as well as the first and the penultimate inequality in \eqref{eq:F_k_g_k_bound_proof_1}:
\begin{equation*}
    g_{k+2}^2 \overset{\eqref{eq:g_k_leq_r_k}}{\leq} \frac{\lambda_{k+1}^2}{\alpha^2} r_{k+1}^2 
    \overset{\eqref{eq:F_k_g_k_bound_proof_1}}{\leq} \frac{\lambda_{k+1}^2}{\alpha^2} \frac{g_{k+1}^2}{2\beta \mu\lambda_{k+1}}= \frac{\lambda_{k+1} g_{k+1}^2}{2\alpha^2\beta\mu}.
\end{equation*}
\end{proof}

\section{Global convergence rates}
\label{sec:global rates}
In this section, we obtain our main results on global nonasymptotic convergence rate results for \cref{alg:proximal_newton} in various settings including under a PL condition and a convex setting. The main tool for these results is \cref{ass:main_inequality}.

\subsection{Global \texorpdfstring{$\mathcal{O}(1\slash \sqrt{k})$}{O(1/sqrt(k))} convergence rate for gradient norm}
In this subsection, we focus on the setting where $F$ in \eqref{eq:opt} is not necessarily convex (but \cref{ass:subproblem} is satisfied). We show that under the global \cref{ass:main_inequality},  \cref{alg:proximal_newton} has a global nonasymptotic convergence rate $\mathcal{O}(1\slash \sqrt{k})$ in terms of the minimal subgradient norm on the trajectory. The formal result is as follows. 
\begin{theorem}[Global convergence rate]
    \label{thm:global_convergence_sublinear_nonconv}
    Let, for problem \eqref{eq:opt},  \cref{ass:basic,ass:subproblem,ass:main_inequality} hold. Let also $(x_k)_{k \in \mathbb{N}}$ be the iterates of \cref{alg:proximal_newton} with arbitrary starting point  $x_0\in \dom F$. Then, the sequence $(x_k)_{k \in \mathbb{N}}$ is well defined, $(x_k)_{k \in \mathbb{N}} \subset \F_0$, i.e., the iterates of the algorithm stay in the sublevel set $\F_0$ (see \eqref{eq:sublevel_set_def}), and 
    \[\norm{F'(x_{k})}{*} \to 0 \quad \text{as $k\to \infty$.}\] Moreover, the following global nonasymptotic convergence rate holds:
    \begin{equation}
    \label{eq:nonconv_rate}
        \min_{0\leq i \leq k-1}\norm{F'(x_{i+1})}{*}\leq \frac{\sqrt{\max\{2mL,\Lambda_0\}(F({x}_0) - F^*)}}{\sqrt{\beta\alpha^2 k}}, \quad k \geq 1.
    \end{equation}
    If in addition $\lambda_k \to 0$  as $k\to \infty$, we obtain that $\min_{0\leq i \leq k-1}\norm{F'(x_{i+1})}{*} = \so(1\slash \sqrt{k})$.
    Finally, the number of Newton steps \eqref{eq:prox_step_alg}
    up to the end of iteration $k$ does not exceed
    \[
    k+1+\log_2\max\{1/2,mL/\Lambda_0\}.
    \]
\end{theorem}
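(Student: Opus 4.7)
The key ingredients are already in place: Lemma \ref{lem:acceptance_inner_nonconv} (which applies since Assumption \ref{ass:main_inequality} implies \eqref{eq:acc_lemma_ass_main_inequality} globally) guarantees that every inner backtracking loop terminates in finitely many trials with $\lambda_k\le \bar\lambda\coloneqq\max\{2mL,\Lambda_0\}$, and Lemma \ref{lem:functionValuesGradientRelation} gives the per-iteration progress \eqref{eq:functionValuesGradientRelation}. So the plan is essentially to combine these with a telescoping argument. First I would observe that well-definedness of $(x_k)$ is immediate from Lemma \ref{lem:acceptance_inner_nonconv}, and that $x_k\in\mathcal F_0$ follows from the monotonicity \eqref{eq:F_k_decr} by a trivial induction on $k$.

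For the sublinear rate, I would plug the uniform bound $\lambda_k\le\bar\lambda$ into \eqref{eq:functionValuesGradientRelation} to obtain
\[
F(x_k)-F(x_{k+1})\;\ge\;\frac{\beta\alpha^2}{\bar\lambda}\,\|F'(x_{k+1})\|_*^2,
\]
then telescope from $0$ to $k-1$ and use $F(x_k)\ge F^*$ to get
\[
F(x_0)-F^*\;\ge\;\frac{\beta\alpha^2}{\bar\lambda}\sum_{i=0}^{k-1}\|F'(x_{i+1})\|_*^2\;\ge\;\frac{\beta\alpha^2 k}{\bar\lambda}\min_{0\le i\le k-1}\|F'(x_{i+1})\|_*^2,
\]
which rearranges to \eqref{eq:nonconv_rate}. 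Since the full series is summable, every subsequence and in particular the whole sequence $\|F'(x_k)\|_*$ tends to zero, giving the claimed $\|F'(x_k)\|_*\to 0$.

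For the improved $o(1/\sqrt k)$ statement under the additional hypothesis $\lambda_k\to 0$, the plan is to split the tail: for $\varepsilon>0$ pick $N$ with $\lambda_i<\varepsilon$ for all $i\ge N$ and bound
\[
(k-N)\min_{N\le i\le k-1}\|F'(x_{i+1})\|_*^2\;\le\;\sum_{i=N}^{k-1}\frac{\lambda_i}{\beta\alpha^2}\bigl(F(x_i)-F(x_{i+1})\bigr)\;\le\;\frac{\varepsilon(F(x_0)-F^*)}{\beta\alpha^2},
\]
so $k\min\|F'(x_{i+1})\|_*^2\to 0$ as $k\to\infty$ after letting $\varepsilon\downarrow 0$. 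For the counting of Newton steps I would telescope the update $\Lambda_{k+1}=\lambda_k/2=2^{j_k-1}\Lambda_k$, yielding $\sum_{i=0}^{k}j_i=(k+1)+\log_2(\Lambda_{k+1}/\Lambda_0)$, and then combine with the bound $\Lambda_{k+1}\le\bar\lambda/2=\max\{mL,\Lambda_0/2\}$ from Lemma \ref{lem:acceptance_inner_nonconv} to arrive at the stated logarithmic correction. The only delicate step is the $o(1/\sqrt k)$ refinement, where one must carefully handle the interaction between the vanishing regularisation parameter and the telescoping tail; the remaining parts are largely bookkeeping on top of the lemmas established in Section \ref{sec:algorithm}.
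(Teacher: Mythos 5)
Your proof matches the paper's for everything except the $\so(1/\sqrt{k})$ refinement, where you take a genuinely different route. For well-definedness, $(x_k)\subset\mathcal F_0$, the $\mathcal{O}(1/\sqrt{k})$ rate via telescoping \eqref{eq:functionValuesGradientRelation} with $\lambda_i\le\bar\lambda$, the summability argument for $\|F'(x_k)\|_*\to0$, and the Newton-step count via $j_k = 1+\log_2(\Lambda_{k+1}/\Lambda_k)$ telescoped against the bound $\Lambda_{k+1}\le\bar\lambda/2$, the steps are identical to the paper's.

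For the $\so(1/\sqrt{k})$ claim under $\lambda_k\to0$, you use an $\varepsilon$--$N$ tail split: fix $\varepsilon>0$, choose $N$ with $\lambda_i<\varepsilon$ for $i\ge N$, telescope the per-step decrease over $i\in[N,k-1]$ to get
\[
\min_{0\le i\le k-1}\|F'(x_{i+1})\|_*^2 \;\le\; \min_{N\le i\le k-1}\|F'(x_{i+1})\|_*^2 \;\le\; \frac{\varepsilon\,(F(x_0)-F^*)}{\beta\alpha^2(k-N)},
\]
so $\limsup_k\; k\,\min_{0\le i\le k-1}\|F'(x_{i+1})\|_*^2 \le \varepsilon(F(x_0)-F^*)/(\beta\alpha^2)$, and sending $\varepsilon\downarrow0$ gives $\so(1/k)$ for the squared quantity. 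This is correct. The paper instead keeps $\lambda_i$ inside the sum and applies AM--GM, $\sum 1/\lambda_i \ge k(\prod 1/\lambda_i)^{1/k}$, to produce the nonasymptotic bound
\[
\min_{0\le i\le k-1}\|F'(x_{i+1})\|_*^2 \;\le\; \frac{F(x_0)-F^*}{\beta\alpha^2}\cdot\frac{\bigl(\prod_{j=0}^{k-1}\lambda_j\bigr)^{1/k}}{k},
\]
with the geometric mean tending to zero. Your tail-split is more elementary and needs no AM--GM, but it is purely asymptotic; the paper's version is a quantitative upper bound valid for every $k$, with the rate controlled explicitly by the geometric mean of the $\lambda_j$. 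Since the theorem only asserts the asymptotic $\so(1/\sqrt{k})$, both arguments suffice, but the paper's intermediate inequality \eqref{eq:conv_rate_product}-style estimate is reused elsewhere (e.g., in the convex-case proof), which motivates the AM--GM formulation.
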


\begin{proof}
Since \cref{ass:main_inequality} holds, we have by \cref{lem:acceptance_inner_nonconv} that both inequalities in \eqref{eq:stop_cond_alg} hold in each iteration $k\geq 0$, i.e., the acceptance conditions of the inner loop of \cref{alg:proximal_newton} hold and the inner loop ends after a finite number of trials. Thus, the sequence $(x_k)_{k \in \mathbb{N}}$ is well defined. 
From \eqref{eq:F_k_decr}, it is clear by induction that for all $k\geq 0$, $F(x_k)\leq F(x_0)$ and, thus, $(x_k)_{k \in \mathbb{N}} \subset \F_0$, where $\F_0$ is defined in \eqref{eq:sublevel_set_def}. 

Summing \eqref{eq:functionValuesGradientRelation} from $0$ to $k-1$ and telescoping, using that $F$ is bounded from below by $F^*$ by \cref{ass:basic}, 
\cref{lem:acceptance_inner_nonconv} and the notation $g_{i+1}\coloneqq \norm{F'(x_{i+1})}{*}$ in \eqref{eq:F_r_g_notaion}, we obtain
\[
F({x}_0) - F^*  \geq F({x}_0) - F(x_{k})  \overset{\eqref{eq:functionValuesGradientRelation}}{\geq}  \sum_{i=0}^{k-1}\frac{\beta\alpha^2}{\lambda_i}g_{i+1}^2 \overset{\eqref{eq:lambda_bound}}{\geq} 
\sum_{i=0}^{k-1} \frac{\beta\alpha^2}{\overline{\lambda}}g_{i+1}^2 \geq\frac{k\beta\alpha^2}{\overline{\lambda}} \min_{0\leq i \leq k-1}\norm{F'(x_{i+1})}{*}^2.
\]
First, observe that this chain of inequalities implies that for all $k\geq 0$, $\sum_{i=0}^{k-1}g_{i+1}^2$ is bounded from above. Thus, $\norm{F'(x_{i})}{*}=g_{i} \to 0$ as $i\to \infty$.
Second, recalling from \eqref{eq:lambda_bound} that $\overline{\lambda}=\max\{2mL,\Lambda_0\}$, we immediately get \eqref{eq:nonconv_rate}. Now, suppose that $\lambda_i \to 0$ as $i \to \infty$. We also obtain from the above inequalities, by keeping $\lambda_i$, that
\[F(x_0)  - F^* \geq \beta\alpha^2\min_{0 \leq i \leq k-1}g_{i+1}^2 \sum_{i=0}^{k-1}\frac{1}{\lambda_i}\]
and hence
\[\min_{0 \leq i \leq k-1}g_{i+1}^2 \leq \frac{F(x_0)  - F^*}{\beta\alpha^2}\frac{1}{\sum_{i=0}^{k-1}\frac{1}{\lambda_i} }.\]
The inequality between arithmetic and geometric means (AM-GM) yields $\sum_{i=0}^{k-1}\frac{1}{\lambda_i} \geq k\left(\prod_{i=0}^{k-1}\frac{1}{\lambda_i}\right)^{1\slash k}$, whence
\[\min_{0 \leq i \leq k-1}g_{i+1}^2 \leq \frac{F(x_0)  - F^*}{\beta\alpha^2}\frac{1}{k\left(\prod_{i=0}^{k-1}\frac{1}{\lambda_i}\right)^{1\slash k}} = \frac{F(x_0)  - F^*}{\beta\alpha^2}\frac{\left(\prod_{i=0}^{k-1}\lambda_i\right)^{1\slash k}}{k},\]
and the right-hand side is $\so(1\slash k)$ because the geometric mean converges to zero. By taking the square root, we deduce the $\so(1\slash \sqrt{k})$ rate for $\min_{0\leq i \leq k-1}\norm{F'(x_{i+1})}{*}$.

It remains to estimate the number of Newton steps \eqref{eq:prox_step_alg}. By \cref{step:k+1_update} of \cref{alg:proximal_newton}, we have for all $k\geq 0$ that $\Lambda_{k+1} = 2^{j_k}\Lambda_k/2$, or equivalently $j_k=1+\log_2\frac{\Lambda_{k+1}}{\Lambda_k}$. Summing these equalities from $0$ to $k$, we obtain that the total number of Newton steps up to the end of iteration $k$ is
\[
N_k=\sum_{i=0}^k j_i = \sum_{i=0}^k \left(1+\log_2\frac{\Lambda_{i+1}}{\Lambda_i}\right)=k+1+\log_2\frac{\Lambda_{k+1}} {\Lambda_0}\overset{\eqref{eq:lambda_bound}}{\leq} k+1+\log_2\frac{\overline{\lambda}} {2\Lambda_0}\overset{\eqref{eq:lambda_bound}}{=} k+1+\log_2\max\{1/2,mL/\Lambda_0\}.
\]
\end{proof}

\begin{remark}[Choice of the parameters $m,\alpha,\beta$]
    As one can see, the rate in \eqref{eq:nonconv_rate} is proportional to $\sqrt{m/(\beta\alpha^2)}$ and the smaller this number, the better the rate. This means that $\alpha,\beta$ should be chosen as large as possible. Recall from \cref{lem:acceptance_inner_nonconv}  that $0 < \alpha \leq 1\slash 2$, and $0<\beta \leq (m-1)\slash (2m)$. Thus, the optimal choice is $\alpha = 1\slash 2, \beta = (m-1)\slash (2m)$. This leads to $\sqrt{m/(\beta\alpha^2)}=\sqrt{8m^2/(m-1)}$, which is minimised at $m^*=2$. This, in turn, implies $\beta=1/4$. This is the standard choice outlined in the input of \cref{alg:proximal_newton}. %
\end{remark}

We finish this subsection with a result establishing the stationarity of strong accumulation points of the sequence $(x_k)_{k \in \mathbb{N}}$ of the iterates of \cref{alg:proximal_newton}.
By \cref{thm:global_convergence_sublinear_nonconv}, we have $(x_k)_{k \in \mathbb{N}} \subset \F_0$. If the sublevel set $ \F_0$ is bounded
and the space $\HH$ is finite-dimensional, then there exists a strong accumulation point of $(x_k)_{k \in \mathbb{N}}$. Unfortunately, if  $\HH$ is infinite-dimensional, only weak accumulation points are guaranteed to exist. Following existing literature on nonconvex minimisation in Hilbert spaces \cite{geiersbach2021stochastic,potzl2022second}, we restrict ourselves here to strong accumulation points. The study of additional assumptions on $f$, e.g., strong compactness of sublevel sets, that allow establishing stationarity of weak accumulation points is left for future work.

\begin{prop}[Stationarity of strong accumulation points]
\label{prop:subseq_stationary_nonconv}
Let, for problem \eqref{eq:opt}, \cref{ass:basic,ass:subproblem,ass:main_inequality} hold. Then, all strong accumulation points of the sequence $(x_k)_{k \in \mathbb{N}}$ of the iterates of \cref{alg:proximal_newton} are stationary points of problem \eqref{eq:opt}.
\end{prop}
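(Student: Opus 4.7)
The plan is to leverage Theorem 4.1, which gives $\|F'(x_k)\|_* \to 0$, together with the definition of the limiting subdifferential $\partial_L F = \partial F$ (which coincides with the Fr\'echet subdifferential in our setting via the remark following Assumption 2.1). Specifically, let $x^*$ be a strong accumulation point of $(x_k)_{k\in\N}$, and let $(x_{k_n})$ be a subsequence with $x_{k_n}\to x^*$ strongly. Since each $F'(x_{k_n})\in \partial F(x_{k_n})$, the definition of the limiting subdifferential will yield $0\in \partial F(x^*)$ provided we can verify (i) $F'(x_{k_n})\rightharpoonup^{*} 0$, and (ii) $F(x_{k_n})\to F(x^*)$.

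Item (i) is immediate from Theorem 4.1, since $\|F'(x_{k_n})\|_*\to 0$ in fact gives strong convergence to $0$ in $\HH^*$, which implies weak-$*$ convergence. The main work lies in (ii), and this is where the subtlety arises because $\psi$ is only assumed lower semicontinuous, so a priori one only has $\psi(x^*)\leq \liminf_n \psi(x_{k_n})$. The idea to obtain the matching reverse inequality is to exploit the convexity of $\psi$ together with the explicit subgradient $\psi'(x_{k_n})$ produced by the algorithm. Concretely, recall that $\psi'(x_{k_n}) = F'(x_{k_n}) - f'(x_{k_n})$, and both terms are controlled: $\|F'(x_{k_n})\|_*\to 0$ by Theorem 4.1 while $f'(x_{k_n})\to f'(x^*)$ in $\HH^*$ by continuity of $f'$ (since $f\in C^1$ by Assumption 2.1(ii)). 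Hence $\psi'(x_{k_n})\to -f'(x^*)$ strongly in $\HH^*$, and in particular the sequence $(\psi'(x_{k_n}))$ is bounded.

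With this boundedness in hand, the subgradient inequality for the convex function $\psi$ applied at $y=x^*$ gives
\[
\psi(x^*) \geq \psi(x_{k_n}) + \langle \psi'(x_{k_n}), x^*-x_{k_n}\rangle.
\]
The inner product term vanishes in the limit (bounded dual sequence against a sequence converging to zero in $\HH$), so $\limsup_n \psi(x_{k_n})\leq \psi(x^*)$. Combined with lower semicontinuity of $\psi$, this yields $\psi(x_{k_n})\to \psi(x^*)$; together with $f(x_{k_n})\to f(x^*)$ (from continuity of $f$), we conclude $F(x_{k_n})\to F(x^*)$, giving (ii). Applying the definition of the limiting subdifferential with $p_n\coloneqq F'(x_{k_n})$ and $p=0$ then yields $0\in \partial_L F(x^*) = \partial F(x^*)$, so $x^*$ is stationary.

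The main obstacle, as indicated, is bridging the gap between the lower semicontinuity of $\psi$ and the convergence of function values along the subsequence; the key trick is the convexity-plus-bounded-subgradient argument that upgrades $\liminf$ to a genuine limit. Everything else is essentially a direct invocation of Theorem 4.1 and the continuity of $f'$.
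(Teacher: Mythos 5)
Your proof is correct, but you take a slightly different (and somewhat longer) route than the paper. The paper's proof arrives at $\psi'(x_{k_l}) \to -f'(\bar x)$ exactly as you do, then invokes in one line the fact that the graph of $\partial\psi$ is strongly closed for a proper convex lsc function, giving $-f'(\bar x)\in\partial\psi(\bar x)$ and hence $0\in\partial F(\bar x)$ immediately. You instead route the argument through the definition of the limiting subdifferential, which forces you to additionally establish $F(x_{k_n})\to F(x^*)$, and you supply the missing $\limsup$ bound via the subgradient inequality for $\psi$ together with boundedness of $(\psi'(x_{k_n}))$. That convexity-plus-bounded-subgradient step is precisely the classical proof of the closedness property the paper cites, so in essence you re-derive the key tool from scratch rather than citing it. Your version is therefore more self-contained and makes the mechanism transparent; the paper's is more economical. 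One small remark: the detour through function value convergence is unnecessary here — once you have $\psi'(x_{k_n})\to -f'(x^*)$ strongly with $\psi'(x_{k_n})\in\partial\psi(x_{k_n})$ and $x_{k_n}\to x^*$, the strong-strong closedness of $\partial\psi$ already concludes, without appealing to the limiting subdifferential machinery for $F$ at all.
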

\begin{proof}
    Pick an accumulation point $\bar{x} \in \HH$ and a subsequence such that $x_{k_l} \to \bar{x} $ as $l\to \infty$. 
    From \cref{thm:global_convergence_sublinear_nonconv}, we know that $F'(x_k) \to 0$ as $k\to \infty$. Thus, $F'(x_{k_l}) = f'(x_{k_l})+\psi'(x_{k_l}) \to 0$  as $l \to \infty$. 
    Since $f$ is continuously Fréchet differentiable, we have that $f'(x_{k_l}) \to f'(\bar{x})$. Consequently $\psi'(x_{k_l}) = (\psi'(x_{k_l}) + f'(x_{k_l})) - f'(x_{k_l}) \to - f'(\bar{x})$. Since $\psi$ is convex lsc, $x\mapsto \partial \psi(x)$ has a closed graph and we deduce that $-f'(\bar{x}) =  \lim_{l\to \infty} \psi'(x_{k_l}) \in \partial \psi(\bar{x})$. Thus, equivalently, we have that $0 \in \partial F(\bar{x}) =  f'(\bar{x}) + \partial \psi(\bar{x})$ and so $\bar{x}$ is a stationary point.%
\end{proof}

\subsection{Global linear convergence rate for objective and iterates under PL}\label{subsec:global_linear_under_global_PL}
In this subsection, we still assume that $F$ in \eqref{eq:opt} may be nonconvex and \cref{ass:subproblem,ass:main_inequality} hold. Yet, we additionally make \cref{ass:PL} that $F$ satisfies the PL condition. %
This allows us to obtain a global nonasymptotic \emph{linear} convergence rate for \cref{alg:proximal_newton} for the objective functional values and for the iterates.
The formal result is as follows.

 \begin{theorem}[Global convergence rate, PL condition]
    \label{thm:global_convergence_nonconv_PL}
    Let, for problem \eqref{eq:opt}, \cref{ass:basic,ass:subproblem,ass:main_inequality,ass:PL} hold. Let also $(x_k)_{k \in \mathbb{N}}$ be the iterates of \cref{alg:proximal_newton} with arbitrary starting point $x_0\in \dom F$. %
    Then, the following statements hold. 
    \begin{enumerate}[label=(\roman*)]\itemsep=0em
    \item The objective values have the following global nonasymptotic linear convergence rate: %
        \begin{equation}
        \label{eq:nonconv_PL_rate}
        F(x_{k})-F^* \leq \exp\left(-\frac{2\beta\alpha^2\mu}{2\beta\alpha^2\mu+\max\{2mL,\Lambda_0\}} \cdot k\right)\cdot(F({x}_0)-F^*), \quad k \geq 0.
        \end{equation}
        If in addition $\lambda_k \to 0$ as $k\to \infty$, then $F(x_k)-F^*$ converges to $0$ superlinearly.
        \item %
        $x_k$ converges strongly to a global minimum %
        $x^*$ and the convergence is global, nonasymptotic, and linear:
        \begin{align}
        \norm{x_{k} - x^*}{}  \leq 
        \exp\left(-\frac{2\beta\alpha^2\mu}{4\beta\alpha^2\mu+2\max\{2mL,\Lambda_0\}} \cdot (k-2)\right)\cdot\frac{5\sqrt{F({x}_0)-F^*} }{\alpha\beta\mu^{1/2}}, \;\; k\geq 2.
        \label{eq:nonconv_PL_rate_x} 
        \end{align}
            If in addition $\lambda_k \to 0$ as $k\to \infty$, then $x_k$ converges to $x^*$ superlinearly.
        \item The number of Newton steps \eqref{eq:prox_step_alg} up to the end of iteration $k$ does not exceed
        \[
        k+1+\log_2\max\{1/2,mL/\Lambda_0\}.
        \]
    \end{enumerate}
\end{theorem}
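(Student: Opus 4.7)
The plan is to prove the three claims in turn.

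For \emph{part (i)} (linear and superlinear decay of $F_k$), I would combine the descent inequality \eqref{eq:functionValuesGradientRelation}, $F_k - F_{k+1} \geq (\beta\alpha^2/\lambda_k)\norm{F'(x_{k+1})}{*}^2$, with the PL condition applied at $x_{k+1}$, $\norm{F'(x_{k+1})}{*}^2 \geq 2\mu F_{k+1}$. This yields the one-step contraction $F_{k+1} \leq [\lambda_k/(\lambda_k + 2\beta\alpha^2\mu)]F_k$. Bounding $\lambda_k \leq \bar\lambda := \max\{2mL, \Lambda_0\}$ by \cref{lem:acceptance_inner_nonconv}, iterating, and applying $1-x \leq e^{-x}$ produces the exponential rate in \eqref{eq:nonconv_PL_rate}. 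Superlinearity when $\lambda_k\to 0$ is immediate since $\lambda_k/(\lambda_k+2\beta\alpha^2\mu)\to 0$.

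For \emph{part (ii)} (strong convergence and rate for the iterates), my strategy is to show $\sum_k r_k < \infty$ so that $(x_k)$ is Cauchy in $\HH$ and converges strongly to some $x^*\in\HH$, and then to identify $x^*\in S$ via \cref{prop:subseq_stationary_nonconv} (stationarity of strong accumulation points) together with PL, which forces every stationary point to be a global minimiser. The crucial step-size estimate combines \cref{lem:rk_identity_new}(ii), namely $r_{k+1}^2 \leq \lambda_k r_k^2/(\alpha^2\beta\mu)$, with the acceptance bound $r_k^2 \leq F_k/(\beta\lambda_k)$ coming from \eqref{eq:stop_cond_alg_new_2} (by dropping $F_{k+1}\geq 0$); the $\lambda_k$ factors cancel and give
\[ r_{k+1} \leq \frac{\sqrt{F_k}}{\alpha\beta\sqrt{\mu}}, \qquad k \geq 0. \]
Combined with the geometric decay $F_k \leq c^k F_0$ (with $c = \bar\lambda/(\bar\lambda + 2\beta\alpha^2\mu)$) from part~(i), summation gives a convergent geometric series, hence $(x_k)$ is Cauchy, and the tail bound $\norm{x_k - x^*}{} \leq \sum_{j\geq k} r_j \leq (\alpha\beta\sqrt{\mu})^{-1}\sum_{j\geq k-1}\sqrt{F_j}$ delivers the claimed estimate \eqref{eq:nonconv_PL_rate_x}; the $(k-2)$ shift in the exponent is produced by folding the geometric-series factor $1/(1-\sqrt{c})$ into the exponent via $c\leq\exp(-\rho)$ with $\rho = 2\beta\alpha^2\mu/(\bar\lambda+2\beta\alpha^2\mu)$. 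Superlinearity when $\lambda_k\to 0$ follows immediately, since $\sqrt{F_k}$ then decays superlinearly and so does its tail sum.

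For \emph{part (iii)} (number of Newton steps), the bound is identical to the one in \cref{thm:global_convergence_sublinear_nonconv}: since $\Lambda_{k+1} = 2^{j_k-1}\Lambda_k$, the identity $j_k = 1 + \log_2(\Lambda_{k+1}/\Lambda_k)$ telescopes and, combined with $\Lambda_{k+1} \leq \bar\lambda/2$ from \cref{lem:acceptance_inner_nonconv}, yields the stated bound. The main obstacle lies in part~(ii): the key technical insight is that the $\lambda_k$ appearing in \cref{lem:rk_identity_new}(ii) exactly cancels the $\lambda_k$ in the denominator of the acceptance bound on $r_k^2$, which is what makes the argument go through \emph{without} needing a lower bound on $\lambda_k$ (which is not available because $\lambda_k$ may tend to $0$). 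Keeping the constants tight enough to match the form \eqref{eq:nonconv_PL_rate_x} then requires careful bookkeeping in the geometric tail estimate, in particular absorbing the factor $1/(1-\sqrt{c})$ cleanly into the index shift.
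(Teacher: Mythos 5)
Parts (i) and (iii) of your proposal match the paper's proof. Part (ii) is where you diverge, and there is both a genuinely different (and nice) idea and a gap.

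Your one-step estimate $r_{k+1}\leq\sqrt{F_k}/(\alpha\beta\sqrt\mu)$ is correct and cleaner than what the paper writes at that stage: the $\lambda_k$ in \eqref{eq:rk_identity_new} cancels against the $\lambda_k$ in the acceptance bound $r_k^2\leq F_k/(\beta\lambda_k)$ coming from \eqref{eq:stop_cond_alg_new_2}, and this is precisely what makes the argument robust to $\lambda_k\to 0$. Combined with the geometric decay $F_k\leq c^kF_0$ (where $c=\bar\lambda/(\bar\lambda+2\beta\alpha^2\mu)$) from part (i), you get $\sum_k r_k<\infty$, hence $(x_k)$ is Cauchy and converges strongly; identifying the limit as a global minimiser via \cref{prop:subseq_stationary_nonconv} plus PL is a valid alternative to the paper's argument via lower semicontinuity of $F$. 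The superlinearity of the tail sum when $\lambda_k\to 0$ is also fine.

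The gap is the claim that the geometric-series prefactor $1/(1-\sqrt c)$ can be ``absorbed cleanly into the index shift'' to recover exactly \eqref{eq:nonconv_PL_rate_x}. It cannot. Using $1-\sqrt c\geq(1-c)/2$, one has $1/(1-\sqrt c)\leq\bar\lambda/(\beta\alpha^2\mu)+2$, a $\bar\lambda$-dependent quantity that is unbounded as $\bar\lambda\to\infty$ (or $\mu\to 0$), whereas the paper's prefactor $5/(\alpha\beta\mu^{1/2})$ is $\bar\lambda$-independent. Matching it via a fixed shift $j$ would require $1/(1-\sqrt c)\leq 5\exp(j\rho/2)$ with $\rho=2\beta\alpha^2\mu/(2\beta\alpha^2\mu+\bar\lambda)$, but since $\rho\leq 1$ the right side is bounded by $5\exp(j/2)$ uniformly in $\bar\lambda$ while the left side is not, so no fixed $j$ works. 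The paper sidesteps the infinite geometric sum entirely by telescoping $F_k^{1/2}$ (a KL-type argument): from concavity of $t\mapsto\sqrt t$ together with \eqref{eq:stop_cond_alg_new_2}, \eqref{eq:PL}, \eqref{eq:g_k_leq_r_k}, and $\lambda_{k-1}\leq 2\lambda_k$, it derives
\begin{equation*}
F_k^{1/2}-F_{k+1}^{1/2}\geq\frac{\alpha\beta\mu^{1/2}r_k^2}{2\sqrt 2\, r_{k-1}}
\quad\Longrightarrow\quad
r_k\leq\frac{r_{k-1}}{2}+\frac{\sqrt 2\left(F_k^{1/2}-F_{k+1}^{1/2}\right)}{\alpha\beta\mu^{1/2}},
\end{equation*}
and summing this recursion telescopes in $F_k^{1/2}$ rather than accumulating a $1/(1-\sqrt c)$ factor, which is what yields the $\bar\lambda$-free constant $5$. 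If your goal is only strong convergence and \emph{some} linear rate, your route suffices; to reproduce the stated constant you need the paper's telescoping.
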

\begin{proof}
Recall from \eqref{eq:F_r_g_notaion} the notation for $k\geq 0$
\[
F_k\coloneqq F(x_k)-F^*, \qquad r_k\coloneqq \norm{x_k-x_{k+1}}{}, \qquad g_k\coloneqq \norm{F'(x_k)}{*}.
\]
\begin{enumerate}[label=(\roman*), wide, labelwidth=!, labelindent=0pt]\itemsep=0em
\item  
From \cref{thm:global_convergence_sublinear_nonconv}, we know that the sequence $(x_k)_{k \in \mathbb{N}} \subset \F_0$ is well defined. We can apply the PL condition \cref{ass:PL} (and use that $F^*$ is the global minimal value by \cref{ass:basic}) to get
\begin{equation*}
     F_k - F_{k+1}  \overset{\eqref{eq:functionValuesGradientRelation}}{\geq}  \frac{\beta\alpha^2}{\lambda_k}g_{k+1}^2  \overset{\eqref{eq:PL}}{\geq} \frac{\beta\alpha^2}{\lambda_k} \cdot 2\mu F_{k+1}. 
\end{equation*}
Rearranging, we obtain
\begin{equation}
\label{eq:thm:global_convergence_nonconv_PL_proof_-1.5}
    F_{k+1}  \leq \frac{1}{1+\frac{2\beta\alpha^2\mu}{\lambda_k}}F_k=\frac{\lambda_k}{2\beta\alpha^2\mu+\lambda_k}F_k=\left(1-\frac{2\beta\alpha^2\mu}{2\beta\alpha^2\mu+\lambda_k}\right)F_k.
\end{equation}
We immediately see that if $\lambda_k \to 0$ as $k\to \infty$, then $1-\frac{2\beta\alpha^2\mu}{2\beta\alpha^2\mu+\lambda_k}\to 0$ and we obtain superlinear convergence. Using the upper bound $\lambda_k\leq \overline{\lambda}$ from \cref{lem:acceptance_inner_nonconv} (see \eqref{eq:lambda_bound}), we obtain 
\[
    \left(1-\frac{2\beta\alpha^2\mu}{2\beta\alpha^2\mu+\lambda_k}\right) \leq \left(1-\frac{2\beta\alpha^2\mu}{2\beta\alpha^2\mu+\overline{\lambda}}\right) \leq \exp\left(-\frac{2\beta\alpha^2\mu}{2\beta\alpha^2\mu+\overline{\lambda}}\right).
\]
This, by induction gives, for any $k\geq 0$,
\begin{equation}
\label{eq:thm:global_convergence_nonconv_PL_proof_0}
    F(x_k)-F^*=F_{k}  \leq \exp\left(-\frac{2\beta\alpha^2\mu}{2\beta\alpha^2\mu+\overline{\lambda}}\cdot k\right)(F({x}_0)-F^*),
\end{equation}
which is \eqref{eq:nonconv_PL_rate} since $\overline{\lambda}=\max\{2mL,\Lambda_0\}$ from \eqref{eq:lambda_bound}.

\item 
By \cref{lem:rk_identity_new}, \eqref{eq:stop_cond_alg_new_2}, and \eqref{eq:thm:global_convergence_nonconv_PL_proof_0},
we observe, for all $k\geq0$,
\begin{align} \label{eq:thm:global_convergence_nonconv_PL_proof_-2}
     r_{k+1}^2 \overset{\eqref{eq:rk_identity_new}}{\leq} %
     \frac{\lambda_k r_k^2}{\alpha^2 \beta  \mu} \overset{\eqref{eq:stop_cond_alg_new_2}}{\leq} \frac{  1}{\alpha^2 \beta^2\mu} (F_k - F_{k+1})\overset{\eqref{eq:thm:global_convergence_nonconv_PL_proof_0}}{\to} 0 \quad \text{as $k \to \infty$}.  
\end{align}
Thus, we have shown that $r_k \to 0 $ as $k \to \infty$.

Using the concavity of $t \mapsto t^{1/2}$, the fact that $F_k \geq F_{k+1} \geq 0$ by \eqref{eq:F_k_decr}, and inequality \eqref{eq:stop_cond_alg_new_2}, we get, for all $k\geq0$,
\begin{align}
    F_{k}^{1/2} -F_{k_{}+1}^{1/2} \geq \frac{1}{2F_{k_{}}^{1/2}}(F_{k_{}} - F_{k_{}+1}) \overset{\eqref{eq:stop_cond_alg_new_2}}{\geq} \frac{\beta \lambda_{k_{}} r_{k_{}}^2}{2 F_{k_{}}^{1/2}}. 
    \label{eq:thm:global_convergence_nonconv_PL_proof_1}
\end{align}
Applying again the PL condition, we get $g_k^2 \overset{\eqref{eq:PL}}{\geq} 2\mu F_k$.
Further, by \eqref{eq:g_k_leq_r_k} at iteration $k-1$ we have $g_k\leq \frac{\lambda_{k-1}}{\alpha}r_{k-1}$.
Combining the last two observations with \eqref{eq:thm:global_convergence_nonconv_PL_proof_1}, we obtain
\begin{align*}
    F_{k_{}}^{1/2} -F_{k_{}+1}^{1/2} \overset{\eqref{eq:PL}}{\geq} \frac{\beta \mu^{1\slash 2}\lambda_{k_{}} r_{k_{}}^2}{\sqrt{2} g_k}  
    \overset{\eqref{eq:g_k_leq_r_k}}{\geq}  \frac{\alpha \beta \mu^{1\slash 2}\lambda_{k_{}} r_{k_{}}^2}{\sqrt{2} \lambda_{k-1}r_{k-1}}.      
\end{align*}
By \cref{step:k+1_update} of \cref{alg:proximal_newton} we have $\lambda_k\geq \Lambda_k=\lambda_{k-1}/2$ and consequently
\begin{align}
     F_{k_{}}^{1/2} -F_{k_{}+1}^{1/2}            \geq  \frac{\alpha \beta \mu^{1\slash 2}r_{k_{}}^2}{2\sqrt{2} r_{k-1}}.      \label{eq:main_thm_PL_2}
\end{align}
Rearranging \eqref{eq:main_thm_PL_2} and using inequality $2\sqrt{ab} \leq a+b$ that holds for $a,b\geq0$, we get
\begin{align*}
    r_{k} &\leq  \sqrt\frac{2\sqrt{2} r_{k-1} (F_{k_{}}^{1/2} -F_{k_{}+1}^{1/2})}{\alpha\beta\mu^{1/2}}
    \leq 
    \frac{r_{k-1}}{2} + \frac{\sqrt{2} (F_{k_{}}^{1/2} -F_{k_{}+1}^{1/2})}{\alpha\beta\mu^{1/2}}.
\end{align*}
Summing from $l+1 \geq 1$ to $n \geq l+1$ we obtain 
\begin{align*}
        \sum_{k = l+1}^n r_{k} \leq \frac{1}{2}\sum_{k=l}^{n-1}r_k +  \frac{\sqrt{2}  (F_{l_{}+1}^{1/2} -F_{n_{}+1}^{1/2})}{\alpha\beta\mu^{1/2}}.
\end{align*}
Putting the sums on one side and using that $r_k\geq 0$, we obtain
\begin{align}
        \frac{1}{2}\sum_{k = l+1}^{n-1} r_{k} \leq r_l +  \frac{\sqrt{2}  (F_{l_{}+1}^{1/2} -F_{n_{}+1}^{1/2})}{\alpha\beta\mu^{1/2}} \to 0 \quad \text{as $l,n \to \infty$,} \label{eq:main_thm_PL_5}
\end{align}
where we used that $r_k\to 0$ by \eqref{eq:thm:global_convergence_nonconv_PL_proof_-2} and $F_k \to 0$ by \eqref{eq:thm:global_convergence_nonconv_PL_proof_0}.
Hence $(x_k)_{k \in \mathbb{N}} $ is a Cauchy sequence and strongly converges to a point $x^* \in \HH$, which by \cref{prop:subseq_stationary_nonconv}, is a stationary point. 
Further, the functional $F$ is a sum of continuously differentiable $f$ and convex proper lower semicontinuous $\psi$. Thus, $F$ is lower semicontinuous, which implies
\begin{equation*}
    F^* \overset{\eqref{eq:thm:global_convergence_nonconv_PL_proof_0}}{=} \lim_{k \to \infty}F(x_{k}) = \liminf_{k \to \infty} F(x_{k}) \geq F(x^*).
\end{equation*}
Therefore, $x^*$ is a global minimiser itself. 

The linear convergence of $(x_k)_{k \in \mathbb{N}} $ is shown as follows. Fixing some $l \geq 0$ and arbitrary $n \geq l+2$, we obtain using the bound in  \eqref{eq:main_thm_PL_5} and the triangle inequality, that 
    \begin{align}
        \norm{x_{l+1} - x_n}{}
        \leq \sum_{k = l+1}^{n-1} r_{k} 
        &\leq 2r_l +  \frac{2\sqrt{2}  (F_{l_{}+1}^{1/2} -F_{n_{}+1}^{1/2})}{\alpha\beta\mu^{1/2}} \label{eq:need_for_useful_bounds} %
         \\
        \overset{\eqref{eq:thm:global_convergence_nonconv_PL_proof_-2}}&{\leq}  2\sqrt{\frac{F_{l-1} - F_{l}}{\alpha^2 \beta^2\mu}} 
        +   \frac{2\sqrt{2}  (F_{l_{}+1}^{1/2} -F_{n_{}+1}^{1/2})}{\alpha\beta\mu^{1/2}}.\label{eq:main_thm_PL_6}
    \end{align}
    Sending $n\to \infty$ in \eqref{eq:main_thm_PL_6}, we obtain
    \begin{align*}
        \norm{x_{l+1} - x^*}{}        &\overset{\eqref{eq:main_thm_PL_6}}{\leq}  2\sqrt{\frac{F_{l-1} - F_{l}}{\alpha^2 \beta^2\mu}} 
        +   \frac{2\sqrt{2}  F_{l_{}+1}^{1/2} }{\alpha\beta\mu^{1/2}} 
        {\leq}
        \left( 2\sqrt\frac{1}{\alpha^2\beta^2\mu} 
        +   \frac{2\sqrt{2}   } {\alpha\beta\mu^{1/2}}\right)F_{l-1}^{1/2} \leq \frac{5F_{l-1}^{1/2} }{\alpha\beta\mu^{1/2}}, 
    \end{align*}
    where the penultimate inequality follows as $F_{l-1}-F_l \leq F_{l-1}$ and $0\leq F_{l+1} \leq F_l \leq F_{l-1}$ by \eqref{eq:F_k_decr}. Thus, the linear convergence of $x_l$ \eqref{eq:nonconv_PL_rate_x}  follows since $F_{l}$ converges to 0 linearly, see \eqref{eq:thm:global_convergence_nonconv_PL_proof_0}. In the same way, superlinear convergence when $\lambda_l \to 0$ as $l \to \infty$ follows since $F_{l}$ converges to 0 superlinearly in that case, see \eqref{eq:thm:global_convergence_nonconv_PL_proof_-1.5}.

\item Finally, the bound on the number of Newton steps follows from \cref{thm:global_convergence_sublinear_nonconv}.\qedhere
\end{enumerate}    
\end{proof}
    
As we see from the above result, the additional assumption of the PL condition \cref{ass:PL} allows us to obtain quite strong results, including global strong nonasymptotic linear convergence of the iterates to a global minimum and global nonasymptotic linear convergence of objective values. Observe also that it would have sufficed to assume \cref{ass:PL} only on $\F_0$ (instead of $\dom F$) since by \cref{thm:global_convergence_sublinear_nonconv} we know that $(x_k)_{k \in \mathbb{N}} \subset \F_0$, but we formulated it as it is to simplify the presentation. We also remark that we did not assume that $F$ is convex.   

We finish this subsection by a technical corollary of \cref{thm:global_convergence_nonconv_PL} that will be crucial later for the proof of superlinear convergence of \cref{alg:proximal_newton}. This result relates the distance to solution $\norm{x_k-x^*}{}$ to the distance between two consecutive iterates $\|x_{k+1} - x_k\|$, which in general turns out to be important for proving fast convergence of various algorithms, see, e.g., \cite{drusvyatskiy2018error,drusvyatskiy2021nonsmooth}. A similar result was proved in the recent work \cite{potzl2022second}, but under strong convexity. In our setting, we considerably relax this assumption by assuming that the PL condition holds instead of strong convexity.

    \begin{lem}\label{lem:useful_bounds_new}%
        Let the assumptions of \cref{thm:global_convergence_nonconv_PL} hold. Then, there exist constants $c_1,c_2>0$ such that, for all $k\geq 0$,
    \begin{equation}
        \norm{x_{k+1}-x^*}{} \leq c_1 \| x_{k+1}- x_k\| \quad \text{and} \quad  \norm{x_k-x^*}{} \leq c_2 \|x_{k+1} - x_k\|. \label{eq:useful_bounds_new}
    \end{equation}
    \end{lem}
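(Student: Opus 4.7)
The plan is to piggyback on the intermediate estimate established in the proof of \cref{thm:global_convergence_nonconv_PL}, where the tail sum $\sum_{k=l+1}^{n-1} r_k$ is already controlled by $r_l$ and the function-value gap. Specifically, the chain leading to \eqref{eq:need_for_useful_bounds} gives, for every $l\geq 0$ and every $n\geq l+2$,
$$
\|x_{l+1}-x_n\| \;\leq\; \sum_{k=l+1}^{n-1} r_k \;\leq\; 2 r_l + \frac{2\sqrt{2}\,(F_{l+1}^{1/2}-F_{n+1}^{1/2})}{\alpha\beta\mu^{1/2}}.
$$
Since \cref{thm:global_convergence_nonconv_PL} already yields $x_n\to x^*$ strongly and $F_n\to 0$, I will simply let $n\to\infty$ in this inequality to obtain
$$
\|x_{l+1}-x^*\| \;\leq\; 2 r_l + \frac{2\sqrt{2}\,F_{l+1}^{1/2}}{\alpha\beta\mu^{1/2}}.
$$

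The remaining step is to express $F_{l+1}^{1/2}$ as a multiple of $r_l$. First I would apply the PL inequality at the point $x_{l+1}$, in the form \eqref{eq:F_k_g_k_bound} of \cref{lem:rk_identity_new}, to get $F_{l+1}^{1/2}\leq g_{l+1}/\sqrt{2\mu}$. Then I would use the acceptance consequence \eqref{eq:g_k_leq_r_k}, namely $g_{l+1}\leq \lambda_l r_l/\alpha$, together with the uniform upper bound $\lambda_l\leq \overline{\lambda}=\max\{2mL,\Lambda_0\}$ supplied by \cref{lem:acceptance_inner_nonconv}, to conclude
$$
F_{l+1}^{1/2} \;\leq\; \frac{\overline{\lambda}\, r_l}{\alpha\sqrt{2\mu}}.
$$
Substituting back and collecting constants yields the first inequality in \eqref{eq:useful_bounds_new} with
$$
c_1 \;=\; 2 + \frac{2\overline{\lambda}}{\alpha^2\beta\mu}.
$$

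The second inequality in \eqref{eq:useful_bounds_new} then follows by the triangle inequality,
$$
\|x_k-x^*\| \;\leq\; \|x_k-x_{k+1}\| + \|x_{k+1}-x^*\| \;\leq\; (1+c_1)\,\|x_{k+1}-x_k\|,
$$
so one can take $c_2=1+c_1$. I do not anticipate any real obstacle: the essential estimate has already been done inside the proof of \cref{thm:global_convergence_nonconv_PL}, and the extra work is only the bookkeeping that converts $F_{l+1}^{1/2}$ into $r_l$ via PL and the acceptance rule. The only thing to be slightly careful about is that the intermediate bound \eqref{eq:need_for_useful_bounds} is valid for $l\geq 0$ (not $l\geq 1$), so that passing to the limit in $n$ produces the desired bound for every $k\geq 0$ as required.
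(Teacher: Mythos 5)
Your proof is correct and follows essentially the same argument as the paper: let $n\to\infty$ in the intermediate estimate \eqref{eq:need_for_useful_bounds}, convert $F_{l+1}^{1/2}$ into a multiple of $r_l$ via the PL inequality, \eqref{eq:g_k_leq_r_k}, and the uniform bound $\lambda_l\leq\overline{\lambda}$, and finish with the triangle inequality; you even arrive at the paper's exact constant $c_1=2+\frac{2\overline{\lambda}}{\alpha^2\beta\mu}$. The only cosmetic difference is that you cite \eqref{eq:F_k_g_k_bound} from \cref{lem:rk_identity_new} for the PL step while the paper invokes \eqref{eq:PL} directly, which amounts to the same thing.
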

    \begin{proof}
        Since the assumptions of \cref{thm:global_convergence_nonconv_PL} hold, we know that $x_k \to x^*$ as $k\to \infty$ and we can use \eqref{eq:need_for_useful_bounds} which holds for any $l \geq 0$ and $n \geq l+2$. Whence, changing in \eqref{eq:need_for_useful_bounds} $l$ to $k$ and sending $n \to \infty$, we have        
        \begin{align}
        \label{eq:lem:useful_bounds_new_proof_1}
            \|x_{k+1} - x^*\|   &\leq 2r_k + \frac{2\sqrt{2}  F_{k_{}+1}^{1/2}}{\alpha\beta\mu^{1/2}}. 
        \end{align}
        By \cref{ass:PL} and the  bound $\lambda_k \leq \overline{\lambda}$ from \cref{lem:acceptance_inner_nonconv}, we obtain
        \begin{equation*}
            F_{k+1}^{1/2}\overset{\eqref{eq:F_r_g_notaion}}{=} (F(x_{k+1}) - F^*)^{1/2} \overset{\eqref{eq:PL}}{\leq}  \frac{g_{k+1}}{(2\mu)^{1/2}} \overset{\eqref{eq:g_k_leq_r_k}}{\leq} \frac{\lambda_{k}r_{k}}{\alpha(2\mu)^{1/2}}    %
            \leq \frac{\overline{\lambda}}{\alpha(2\mu)^{1/2}} r_k. %
        \end{equation*}
        Combining this bound and \eqref{eq:lem:useful_bounds_new_proof_1} and recalling that, by \eqref{eq:F_r_g_notaion}, $r_k=\norm{x_k-x_{k+1}}{}$, we have that the first inequality in \eqref{eq:useful_bounds_new} holds with $c_1=2 + \frac{2\bar\lambda}{\alpha^2\beta\mu}$.
        The triangle inequality then yields the second inequality in \eqref{eq:useful_bounds_new} with $c_2=1+c_1$.
    \end{proof}

\subsection{Global \texorpdfstring{$\mathcal{O}(1\slash k)$}{O(1/k)} convergence rate for objective under convexity}\label{sec:global_convex_functions}
In this subsection, we focus on the setting where $F$ in \eqref{eq:opt} is convex on its sublevel set $\F_0$ defined in \eqref{eq:sublevel_set_def}. This assumption allows us to improve some results of the previous subsections. In particular, we obtain for \cref{alg:proximal_newton} a global nonasymptotic convergence rate $\mathcal{O}(1\slash k)$ for the objective residual $F(x_k)-F^*$. Convexity also allows us to show that all \textit{weak} accumulation points of the sequence generated by \cref{alg:proximal_newton} are global minima.

First, we can improve the bound \eqref{eq:functionValuesGradientRelation} in \cref{lem:functionValuesGradientRelation} by using convexity.
\begin{lem}
\label{lem:convex_g_condition}
Let at iteration $k\geq 0 $ of \cref{alg:proximal_newton} the acceptance condition \eqref{eq:stop_cond_alg} hold. Let also $F$ be convex between $x_k$ and $x_{k+1}$, i.e., $F(x_k) \geq F(x_{k+1}) + \langle F'(x_{k+1}), x_k - x_{k+1} \rangle$. 
Then, we have that
    \begin{align}
    F(x_k)-F(x_{k+1}) &\geq \frac{\alpha}{\lambda_k}\norm{F'(x_{k+1})}{*}^2 \quad \text{or equivalently} \quad  F_k-F_{k+1}=F(x_k)-F(x_{k+1}) \geq \frac{ \alpha}{\lambda_k}g_{k+1}^2.\label{eq:functionValuesGradientRelation_convex}
\end{align}
\end{lem}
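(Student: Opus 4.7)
The plan is to combine the convexity hypothesis directly with the first acceptance condition from \eqref{eq:stop_cond_alg}. Both inequalities have the inner product $\langle F'(x_{k+1}), x_k - x_{k+1}\rangle$ on one side, which will serve as the bridging quantity.

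First, I would unpack the convexity assumption stated in the lemma: the inequality $F(x_k) \geq F(x_{k+1}) + \langle F'(x_{k+1}), x_k - x_{k+1}\rangle$ rearranges to
\begin{equation*}
F(x_k) - F(x_{k+1}) \;\geq\; \langle F'(x_{k+1}), x_k - x_{k+1}\rangle.
\end{equation*}
Next, I would invoke the first part of the acceptance condition \eqref{eq:stop_cond_alg} (equivalently \eqref{eq:stop_cond_alg_new_1}), which is guaranteed to hold at iteration $k$ by hypothesis:
\begin{equation*}
\langle F'(x_{k+1}), x_k - x_{k+1}\rangle \;\geq\; \frac{\alpha}{\lambda_k}\,\norm{F'(x_{k+1})}{*}^2.
\end{equation*}
Chaining the two inequalities yields $F(x_k) - F(x_{k+1}) \geq \frac{\alpha}{\lambda_k}\norm{F'(x_{k+1})}{*}^2$, which in the $F_k,g_k$ notation of \eqref{eq:F_r_g_notaion} is exactly \eqref{eq:functionValuesGradientRelation_convex}.

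There is essentially no obstacle: the lemma is a two-line consequence of chaining the convexity inequality with the first acceptance criterion. It is worth noting what the improvement over \eqref{eq:functionValuesGradientRelation} in \cref{lem:functionValuesGradientRelation} buys us: the constant in the lower bound improves from $\beta\alpha^2/\lambda_k$ to $\alpha/\lambda_k$, i.e., by a factor of $1/(\alpha\beta) \geq 2m/((m-1)\alpha)$. This improvement stems from not having to pass through the step-length bound \eqref{eq:g_k_leq_r_k} followed by the descent condition \eqref{eq:stop_cond_alg_new_2}, but instead using convexity to directly convert the first acceptance condition into an objective-value decrease. This tighter recursion will be what allows the upgrade from the $\mathcal{O}(1/\sqrt{k})$ gradient-norm rate of \cref{thm:global_convergence_sublinear_nonconv} to the claimed $\mathcal{O}(1/k)$ rate for $F(x_k)-F^*$ in the convex setting.
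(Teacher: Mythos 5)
Your proof is correct and is essentially identical to the paper's: chain the convexity inequality $F(x_k)-F(x_{k+1})\geq\langle F'(x_{k+1}),x_k-x_{k+1}\rangle$ with the first acceptance criterion in \eqref{eq:stop_cond_alg}. The closing remark on why this is tighter than \eqref{eq:functionValuesGradientRelation} is accurate context, though not needed for the proof itself.
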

\begin{proof}
 Using convexity, the definition of $x_{k+1}$ in \cref{step:k+1_update} of \cref{alg:proximal_newton}, and the first inequality  in \eqref{eq:stop_cond_alg} we obtain $F(x_k)-F(x_{k+1}) \geq \langle F'(x_{k+1}), x_k - x_{k+1} \rangle \overset{\eqref{eq:stop_cond_alg}}{\geq} \frac{\alpha}{\lambda_k}\norm{F'(x_{k+1})}{*}^2.$ The rest follows from the notation \eqref{eq:F_r_g_notaion}.
\end{proof}

Recall the definition \eqref{eq:sublevel_set_def} of the sublevel set $\F_0$ 
defined by the starting point $x_0$. We assume it to be bounded, i.e.,
\begin{equation}
\label{eq:bounded_sublevel_set}
     D_0\coloneqq  \sup_{x, y \in \mathcal{F}_0} \norm{x-y}{} < +\infty.  
\end{equation}
By \cref{thm:global_convergence_sublinear_nonconv}, we have that 
$(x_k)_{k \in \mathbb{N}} \subset \mathcal{F}_0$ is well defined. 
This, together with convexity of $F$ on $\F_0$ and the notation $g_k\coloneqq \norm{F'(x_k)}{*}$ in \eqref{eq:F_r_g_notaion} imply that, for any $\bar x \in \mathcal{F}_0$ and $k\geq 0$, we have
    \begin{equation}
    \label{eq:convexity_id} 
        F(x_k) - F(\bar x) \leq \langle F'(x_{k}), x_{k}-\bar x \rangle \leq g_{k}\sup_{x,y \in \mathcal{F}_0}\norm{x-y}{} = g_{k}D_0.   
    \end{equation}
As a direct consequence we obtain that \textit{weak} accumulation points of $(x_k)_{k \in \mathbb{N}}$ are global minima. 
\begin{prop}[Optimality of weak accumulation points]
\label{prop:weak_conv}
Let, for problem \eqref{eq:opt},  \cref{ass:basic,ass:subproblem,ass:main_inequality} hold. Let additionally $F$ have a bounded sublevel set $\mathcal{F}_0$ and be convex on $\mathcal{F}_0$. Then, all weak accumulation points of the sequence $(x_k)_{k \in \mathbb{N}}$ of the iterates of \cref{alg:proximal_newton} are global minima of problem \eqref{eq:opt}.
\end{prop}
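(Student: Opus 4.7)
The plan is to combine three ingredients already on the table: (a) the strong descent/asymptotic-gradient information from \cref{thm:global_convergence_sublinear_nonconv}, (b) the convex subgradient inequality recorded as \eqref{eq:convexity_id}, and (c) the classical fact that on a weakly closed convex set a convex lower semicontinuous function is weakly lower semicontinuous. The global solution set $S$ is nonempty by \cref{ass:basic}, and any $x^*\in S$ satisfies $F(x^*)=F^*\leq F(x_0)$, so $x^*\in\mathcal{F}_0$.

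First, \cref{thm:global_convergence_sublinear_nonconv} applies (its hypotheses are exactly the standing assumptions of the proposition), so the sequence $(x_k)_{k\in\mathbb{N}}$ is well defined, $(x_k)\subset\mathcal{F}_0$, and $g_k\coloneqq\|F'(x_k)\|_*\to 0$. Plugging $\bar x=x^*\in\mathcal{F}_0$ into \eqref{eq:convexity_id} gives
\[
0 \leq F(x_k)-F^* \leq g_k D_0 \xrightarrow[k\to\infty]{} 0,
\]
so $F(x_k)\to F^*$.

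Next I would verify that $\mathcal{F}_0$ is weakly closed. Convexity of $F$ on $\mathcal{F}_0$ implies that $\mathcal{F}_0$ is a convex set: for $x,y\in\mathcal{F}_0$ and $t\in[0,1]$, the convex inequality yields $F(tx+(1-t)y)\leq tF(x)+(1-t)F(y)\leq F(x_0)$. Moreover, $F=f+\psi$ is lower semicontinuous (since $f$ is continuous and $\psi$ is convex lower semicontinuous), so $\mathcal{F}_0$ is strongly closed. Being convex and strongly closed in a Hilbert space, it is weakly closed by Mazur's theorem. Hence every weak accumulation point $x^{\dagger}$ of $(x_k)$ lies in $\mathcal{F}_0$.

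It remains to show $F(x^\dagger)=F^*$. Extract a subsequence $x_{k_l}\rightharpoonup x^\dagger$. By Mazur's lemma, there exist finite convex combinations $\bar z_n=\sum_{i\geq n}\alpha_{n,i}x_{k_i}$ with $\alpha_{n,i}\geq 0$, $\sum_i\alpha_{n,i}=1$, converging strongly to $x^\dagger$; since $\mathcal{F}_0$ is convex, $\bar z_n\in\mathcal{F}_0$. By convexity of $F$ on $\mathcal{F}_0$,
\[
F(\bar z_n)\leq \sum_{i\geq n}\alpha_{n,i}F(x_{k_i})\leq \sup_{i\geq n}F(x_{k_i}).
\]
Using lower semicontinuity of $F$ and the already established $F(x_{k_l})\to F^*$,
\[
F(x^\dagger)\leq \liminf_{n\to\infty}F(\bar z_n)\leq \limsup_{n\to\infty}\sup_{i\geq n}F(x_{k_i})=F^*,
\]
so $F(x^\dagger)=F^*$ and $x^\dagger$ is a global minimiser.

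The only mildly subtle point, and the one I would treat with care, is that $F$ is convex only on $\mathcal{F}_0$ rather than globally, which forbids the one-line appeal ``convex + lsc implies weakly lsc.'' This is handled above either via Mazur's lemma as written, or equivalently by noting that $\hat F\coloneqq F+\chi_{\mathcal{F}_0}$ is convex and lower semicontinuous on all of $\mathcal{H}$, hence weakly lower semicontinuous, and then $F(x^\dagger)=\hat F(x^\dagger)\leq \liminf \hat F(x_{k_l})=F^*$. Everything else is a direct application of results already proved in the paper.
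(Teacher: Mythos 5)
Your proof is correct and follows the same skeleton as the paper's: use \eqref{eq:convexity_id} together with $g_k\to 0$ from \cref{thm:global_convergence_sublinear_nonconv} to conclude $F(x_{k_l})\to F^*$, then invoke weak lower semicontinuity to pass to the weak limit. The one place you diverge is that you explicitly verify the weak-lower-semicontinuity step rather than simply citing it: the paper's proof appeals to ``weak lower semicontinuity of the convex function $F$,'' but $F$ is only assumed convex on $\mathcal{F}_0$, not globally, so the textbook fact ``convex $+$ lsc $\Rightarrow$ weakly lsc'' does not apply off the shelf. You close this gap by first showing $\mathcal{F}_0$ is convex and strongly closed (hence weakly closed by Mazur's theorem), and then either running Mazur's lemma directly on the weakly convergent subsequence or, equivalently, passing to $\hat F = F + \chi_{\mathcal{F}_0}$, which \emph{is} globally convex and lsc and therefore weakly lsc, and agrees with $F$ at $x^\dagger\in\mathcal{F}_0$. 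This is a genuine tightening of the argument: the paper's one-line appeal is sound in spirit but leaves the reader to supply exactly the justification you wrote out, so flagging it as ``the only mildly subtle point'' is the right instinct.
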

\begin{proof}
    Note that since $\F_0$ is bounded and $(x_k)_{k \in \mathbb{N}} \subset \mathcal{F}_0$, the set of weak accumulation points is not empty. Pick an accumulation point $\bar{x} \in \mathcal{H}$ and a subsequence such that $x_{k_l} \rightharpoonup \bar{x} $ as $l\to \infty$. By \eqref{eq:convexity_id}, we conclude for any minimiser $x^* \in \mathcal{H}$, 
    \begin{equation}
    \label{eq:prop:weak_conv_proof_1}
        F(x_{k_l}) - F(x^*) \leq g_{k_l} D_0. 
    \end{equation}
    By \cref{thm:global_convergence_sublinear_nonconv}, we have that
    $g_k \to 0$ as $k\to \infty$ and consequently, by \eqref{eq:prop:weak_conv_proof_1},  
    also
    \begin{equation*}
        F(x^*) = \lim_{l \to \infty}F(x_{k_l}) = \liminf_{l \to \infty} F(x_{k_l}) \geq F(\bar{x})
    \end{equation*}
    by weak lower semicontinuity of the convex function $F$. Therefore, $\bar{x}$ is itself a global minimiser.    
\end{proof}
\begin{remark}
    If the minimiser is unique, we deduce weak convergence of the full sequence to the unique minimiser.
\end{remark}

We now present the main result of this subsection on the global nonasymptotic convergence rate of \cref{alg:proximal_newton} when $F$ is convex on $\F_0$.

\begin{theorem}[Global convergence rate, convex case]
\label{thm:global_convergence_sublinear}
Let, for problem \eqref{eq:opt},  \cref{ass:basic,ass:subproblem,ass:main_inequality} hold. Let also $(x_k)_{k \in \mathbb{N}}$ be the iterates of \cref{alg:proximal_newton} with arbitrary starting point $x_0\in \dom F$. Let additionally $F$  have a bounded sublevel set $\mathcal{F}_0$ and be convex on $\mathcal{F}_0$. 
Then, the following global nonasymptotic convergence rate holds:
\begin{equation}
    \label{eq:conv_rate}
        F(x_k) - F^* \leq \norm{F'(x_0)}{*}D_0\exp\left(-\frac{k}{4}\right) + \frac{2D_0^2 \max\{2mL,\Lambda_0\}}{\alpha k}, \quad k \geq 0.
\end{equation}
If in addition $\lambda_k \to 0$ as $k\to \infty$, we obtain that $F(x_k)-F^*=\so(1\slash k)$.
Moreover, the number of Newton steps \eqref{eq:prox_step_alg} up to the end of iteration $k$ does not exceed
\[
    k+1+\log_2\max\{1/2,mL/\Lambda_0\}.
\]

\end{theorem}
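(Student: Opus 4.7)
The plan is to combine the improved descent inequality from \cref{lem:convex_g_condition} (available here because $F$ is convex on $\mathcal{F}_0$), namely $F_k - F_{k+1} \geq \frac{\alpha}{\lambda_k} g_{k+1}^2$, with the convexity bound \eqref{eq:convexity_id}, which gives $F_{k+1} \leq g_{k+1} D_0$. Chaining these produces the purely scalar recurrence $F_k - F_{k+1} \geq \frac{\alpha}{\lambda_k D_0^2} F_{k+1}^2$, and together with the uniform upper bound $\lambda_k \leq \overline{\lambda} \coloneqq \max\{2mL, \Lambda_0\}$ from \cref{lem:acceptance_inner_nonconv} this reduces the global rate to a univariate analysis of the sequence $(F_k)_{k\geq 0}$. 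Applying \eqref{eq:convexity_id} once more at a global minimiser $x^* \in \mathcal{F}_0$ yields $F_0 \leq \|F'(x_0)\|_* D_0$, which explains the prefactor of the exponential term in \eqref{eq:conv_rate}.

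To extract the $\exp(-k/4) + \mathcal{O}(1/k)$ bound, I would run a two-phase analysis: partition $\{0, \ldots, k-1\}$ into a \emph{fast} set $\mathcal{F}_\star = \{i : F_{i+1} \leq F_i/2\}$ and a complementary \emph{slow} set $\mathcal{S}_\star$. Fast indices telescope multiplicatively to $F_k \leq F_0 \cdot 2^{-|\mathcal{F}_\star|}$. For a slow index $i$, dividing the recurrence by $F_i F_{i+1}$ and using $F_{i+1}/F_i > 1/2$ yields $\frac{1}{F_{i+1}} - \frac{1}{F_i} \geq \frac{\alpha}{2\overline{\lambda} D_0^2}$, whence summing over $\mathcal{S}_\star$ gives $F_k \leq \frac{2\overline{\lambda} D_0^2}{\alpha |\mathcal{S}_\star|}$. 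Since $|\mathcal{F}_\star|+|\mathcal{S}_\star|=k$, at least one of the two cardinalities is $\geq k/2$; adding the two bounds together and using $2^{-k/2} \leq \exp(-k/4)$ produces the form stated in \eqref{eq:conv_rate}.

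For the $\so(1/k)$ refinement when $\lambda_k \to 0$, I would retain the $\lambda_i$'s in the slow-index telescoping to obtain $\frac{1}{F_k} \geq \frac{\alpha}{2D_0^2} \sum_{i \in \mathcal{S}_\star,\, i<k} \lambda_i^{-1}$, and then apply the AM--GM inequality in the same spirit as in the proof of \cref{thm:global_convergence_sublinear_nonconv} to conclude that $\sum_{i \in \mathcal{S}_\star} \lambda_i^{-1} = \omega(|\mathcal{S}_\star|)$ whenever $\lambda_i \to 0$. A sub-case split on whether $|\mathcal{F}_\star| \geq 2 \log_2 k$ (in which case $F_k \leq F_0/k^2$, trivially $\so(1/k)$) or $|\mathcal{S}_\star|/k \to 1$ (so that the AM--GM estimate yields $F_k = \so(1/|\mathcal{S}_\star|) = \so(1/k)$) then closes the argument. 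The bound on the number of Newton subproblem solves is identical to that in \cref{thm:global_convergence_sublinear_nonconv} and follows verbatim from \cref{lem:acceptance_inner_nonconv} together with the update rule $\Lambda_{k+1} = \lambda_k/2$.

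The main subtlety will be matching the precise constants in \eqref{eq:conv_rate}: the naive threshold $F_{i+1} \leq F_i/2$ returns $4\overline{\lambda} D_0^2/(\alpha k)$ rather than the stated $2\overline{\lambda} D_0^2/(\alpha k)$, so recovering the sharper constant may require either a tuned splitting threshold or a one-shot weighted summation based on the telescoped identity $\sum_{i=0}^{k-1} F_{i+1}^2/\lambda_i \leq D_0^2 F_0/\alpha$ combined with a more careful AM--GM / Cauchy--Schwarz step.
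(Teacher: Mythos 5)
Your proposal is correct in substance and reaches the same $\exp(-k/4)+\mathcal{O}(1/k)$ rate, but it takes a genuinely different route from the paper's. Both arguments begin identically: invoke \cref{lem:convex_g_condition} for $F_k - F_{k+1}\ge \tfrac{\alpha}{\lambda_k}g_{k+1}^2$ and \eqref{eq:convexity_id} for $F_{k+1}\le g_{k+1}D_0$. Where you diverge is in what happens next. The paper keeps the gradient information: it divides through by $F_k F_{k+1}$ to obtain $\tfrac{1}{F_{k+1}}-\tfrac{1}{F_k}\ge \tfrac{\alpha}{D_0^2\lambda_k}(g_{k+1}/g_k)^2$, sums, and applies AM--GM so that the product $\prod(g_{j+1}/g_j)^2$ telescopes cleanly to $(g_k/g_0)^2$. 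It then converts $g_k$ back to $F_k$ via \eqref{eq:convexity_id} and closes with $\exp(x)\ge 1+x$, followed by a \emph{single} dichotomy on the size of $\tfrac{2}{k}\ln(D_0 g_0/F_k)$. You instead eliminate the gradients immediately, reducing to the purely scalar recurrence $F_k-F_{k+1}\ge\tfrac{\alpha}{\lambda_k D_0^2}F_{k+1}^2$, and then perform a per-index fast/slow dichotomy on $F_{i+1}/F_i$. Your approach is arguably more elementary — it avoids the gradient-ratio telescoping and the somewhat opaque $\exp(-x)\ge 1-x$ manoeuvre — at the cost of the factor-of-two loss in the $1/k$ term that you yourself flag (one needs $|\mathcal{S}_\star|\ge k/2$ rather than all $k$ indices), and a more delicate two-subcase argument for the $\so(1/k)$ refinement. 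In the paper, the $\so(1/k)$ refinement is immediate because the sum runs over \emph{all} indices, so $\bigl(\prod_{j<k}\lambda_j\bigr)^{1/k}\to 0$ as soon as $\lambda_j\to 0$; in your version the sum runs only over the slow indices, so one must additionally verify that restricting to $\mathcal{S}_\star$ does not systematically avoid the indices where $\lambda_i$ is small. Your case split does handle this — because $|\mathcal{F}_\star\cap\{0,\dots,k-1\}| < 2\log_2 k$ forces $\mathcal{S}_\star$ to be cofinite on scale $k$, and any cofinite-on-scale-$k$ subset of a sequence tending to $0$ has geometric mean tending to $0$ — but this is a step you should spell out rather than wave through as ``in the same spirit as'' \cref{thm:global_convergence_sublinear_nonconv}. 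The bound on the Newton-step count is indeed verbatim from \cref{thm:global_convergence_sublinear_nonconv}, as you say.
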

\begin{proof}
From \cref{thm:global_convergence_sublinear_nonconv}, we know that 
$(x_k)_{k \in \mathbb{N}} \subset \F_0$ is well defined. Hence, we have the convexity assumption at our disposal for the sequence $(x_k)_{k \in \mathbb{N}}$, by assumption. This, in particular, means that we can use inequalities \eqref{eq:functionValuesGradientRelation_convex} and \eqref{eq:convexity_id} for all $k\geq 0$.

The proof of the convergence rate is based on the per iteration decrease inequality \eqref{eq:functionValuesGradientRelation_convex} and follows the same lines as in  \cite{doikov2024super}. Recall from \eqref{eq:F_r_g_notaion} that we denote $g_k  \coloneqq \norm{F'(x_k)}{*}$ and $F_k \coloneqq  F(x_k)-F^* \geq 0$.
We start with the following chain of inequalities for any iteration $k\geq0$
\begin{align*}
    \frac{1}{F_{k+1}}-\frac{1}{F_k} &= \frac{F_k - F_{k+1}}{F_kF_{k+1}}
    \overset{\eqref{eq:functionValuesGradientRelation_convex}}{\geq} \frac{\alpha}{\lambda_kF_kF_{k+1}}g_{k+1}^2 
    = \frac{\alpha}{\lambda_k F_kF_{k+1}}\left(\frac{g_{k+1}}{g_k}\right)^2 g_k^2\\
        \overset{\eqref{eq:convexity_id}}&{\geq} \frac{\alpha}{D_0^2\lambda_k F_kF_{k+1}}\left(\frac{g_{k+1}}{g_k}\right)^2 F_k^2\geq \frac{\alpha}{D_0^2\lambda_k}\left(\frac{g_{k+1}}{g_k}\right)^2,  
\end{align*}
where in the last step we used that, by \eqref{eq:functionValuesGradientRelation_convex}, $F_k \geq F_{k+1}$.
Summing these inequalities for $j$ from $0$ to $k-1$ and using the inequality between arithmetic and geometric means (AM-GM), we obtain 
\begin{align*}
    \frac{1}{F_{k}}-\frac{1}{F_0}  
        &\geq \frac{\alpha}{D_0^2}\sum_{j=0}^{k-1}\frac{1}{\lambda_j}\left(\frac{g_{j+1}}{g_j}\right)^2 
        \geq \frac{\alpha}{D_0^2}k\left(\prod_{j=0}^{k-1} \frac{1}{\lambda_j}\left(\frac{g_{j+1}}{g_j}\right)^2\right)^{1/k}
        = \frac{\alpha k}{D_0^2} \left(\frac{g_{k}}{g_0}\right)^{2/k}\left(\prod_{j=0}^{k-1} \frac{1}{\lambda_j}\right)^{1/k}\\
         \overset{\eqref{eq:convexity_id}}&{\geq} \frac{\alpha k}{D_0^2}\left(\frac{F_{k}}{D_0g_0}\right)^{2/k}\left(\prod_{j=0}^{k-1}\lambda_j\right)^{-1/k} 
         = \frac{\alpha k}{D_0^2}\left(\frac{D_0g_0}{F_{k}}\right)^{-2/k}\left(\prod_{j=0}^{k-1}\lambda_j\right)^{-1/k}\\
         &= \frac{\alpha k}{D_0^2}\exp\left(-\frac{2}{k}\ln\left(\frac{D_0g_0}{F_{k}}\right)\right)\left(\prod_{j=0}^{k-1}\lambda_j\right)^{-1/k} 
         \geq \frac{\alpha k}{D_0^2}\left(1-\frac{2}{k}\ln\left(\frac{D_0g_0}{F_{k}}\right)\right)\left(\prod_{j=0}^{k-1}\lambda_j\right)^{-1/k},
\end{align*}
where in the last step we used that $\exp(x) \geq 1+x$.  
Consider two cases. If $\frac{2}{k}\ln\left(\frac{D_0g_0}{F_{k}}\right) \geq \frac 12$, we obtain $F_k \leq g_0D_0\exp\left(-\frac{k}{4}\right)$. Otherwise, if $\frac{2}{k}\ln\left(\frac{D_0g_0}{F_{k}}\right) < \frac 12$, we obtain
\begin{align}
\frac{1}{F_k} \geq  \frac{\alpha k}{2D_0^2}\left(\prod_{j=0}^{k-1}\lambda_j\right)^{-1/k} \quad\Longleftrightarrow \quad F_k \leq  \frac{2D_0^2}{\alpha k}\left(\prod_{j=0}^{k-1}\lambda_j\right)^{1/k}.
\label{eq:conv_rate_product}
\end{align}
Combining these two cases, we obtain
\[
 F(x_k) - F^* = F_k \leq  g_0D_0\exp\left(-\frac{k}{4}\right) + \frac{2D_0^2}{\alpha k}\left(\prod_{j=0}^{k-1}\lambda_j\right)^{1/k}.
\]
Applying the bound $\lambda_k \leq \bar\lambda = \max\{2mL,\Lambda_0\}$ from \cref{lem:acceptance_inner_nonconv}, we obtain the convergence rate result \eqref{eq:conv_rate} since $g_0  \coloneqq \norm{F'(x_0)}{*}$. Further, if $\lambda_k \to 0$ as $k\to \infty$, we obtain that $\left(\prod_{j=0}^{k-1}\lambda_j\right)^{1/k} \to 0$ since $\lambda_k\geq 0$ for $k\geq0$, and, hence, $F(x_k)-F^*=\so(1\slash k)$. The bound on the number of Newton steps is obtained in the same way as in  \cref{thm:global_convergence_sublinear_nonconv}.
\end{proof}

\begin{remark}[Choice of the parameters $m,\alpha,\beta$]
    First, observe that the proof of the convergence rate in \eqref{eq:conv_rate} was based on \eqref{eq:functionValuesGradientRelation_convex}, which in turn follows from the first inequality in \eqref{eq:stop_cond_alg} and convexity. Thus, if it is known that $F$ is convex, the second inequality in \eqref{eq:stop_cond_alg} can be omitted, and the algorithm would simplify. This also means that in the convex case $\beta$ can be chosen arbitrarily. Nevertheless, we keep the second inequality in \eqref{eq:stop_cond_alg} to make the algorithm agnostic to the knowledge of whether the problem is convex or not.
    Further, as one can see, the rate in \eqref{eq:conv_rate} is proportional to $m/\alpha$ and the smaller that number, the better the rate. This, in particular, means that $\alpha$ should be chosen as large as possible and $m$ should be as small as possible. Recall from \cref{lem:acceptance_inner_nonconv}  that $0 < \alpha \leq 1\slash 2$, and $1 \leq m$. Thus, the optimal choice in the convex case is $\alpha = 1\slash 2, m=1$. Yet, the standard choice $\alpha = 1\slash 2, m=2$ also works.
\end{remark}

We can show that the convexity of $F$ also allows for improving the global nonasymptotic convergence rate in terms of the minimal norm  of the subgradient 
on the trajectory of \cref{alg:proximal_newton}. Namely, in the next result, we obtain an improved $\mathcal{O}(1\slash k)$ rate compared to $\mathcal{O}(1\slash \sqrt{k})$ in \cref{thm:global_convergence_sublinear_nonconv}.

\begin{theorem}[Global convergence rate for subgradients, convex case] Let assumptions of \cref{thm:global_convergence_sublinear} hold. Then, the following global nonasymptotic convergence rate holds:
\begin{equation}
\label{eq:conv_rate_grad}
     \min_{i=k,\ldots,2k} \norm{ F'(x_{i+1})}{*} \leq 2\sqrt{\frac{\max\{2mL,\Lambda_0\}\norm{F'(x_0)}{*}D_0}{\alpha k}}\exp\left(-\frac{k}{8}\right) + \frac{4D_0 \max\{2mL,\Lambda_0\}}{\alpha k}, k\geq 0.
\end{equation}
\end{theorem}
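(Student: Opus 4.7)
The plan is to leverage the per-iteration decrease \eqref{eq:functionValuesGradientRelation_convex} available in the convex case, telescope it over a window of iterations, and then plug in the $\mathcal{O}(1/k)$ objective rate \eqref{eq:conv_rate} from \cref{thm:global_convergence_sublinear}. Concretely, first I would recall that since $(x_k)_{k\in\mathbb{N}} \subset \mathcal{F}_0$ (by \cref{thm:global_convergence_sublinear_nonconv}) and $F$ is convex on $\mathcal{F}_0$, \cref{lem:convex_g_condition} applies at every iteration, giving
\[
F_i - F_{i+1} \geq \frac{\alpha}{\lambda_i}\, g_{i+1}^2 \geq \frac{\alpha}{\overline{\lambda}}\, g_{i+1}^2,
\]
where $\overline{\lambda} = \max\{2mL, \Lambda_0\}$ by \eqref{eq:lambda_bound}.

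Second, I would sum these inequalities for $i = k, k+1, \ldots, 2k$ (a window of $k+1$ terms), telescoping the left-hand side, and use $F_{2k+1} \geq 0$ to obtain
\[
F_k \;\geq\; F_k - F_{2k+1} \;\geq\; \frac{\alpha}{\overline{\lambda}} \sum_{i=k}^{2k} g_{i+1}^2 \;\geq\; \frac{\alpha (k+1)}{\overline{\lambda}} \min_{k \leq i \leq 2k} g_{i+1}^2 \;\geq\; \frac{\alpha k}{\overline{\lambda}} \min_{k \leq i \leq 2k} g_{i+1}^2,
\]
so that
\[
\min_{k \leq i \leq 2k} \|F'(x_{i+1})\|_* \;\leq\; \sqrt{\frac{\overline{\lambda}\, F_k}{\alpha k}}.
\]

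Third, I would invoke the objective bound \eqref{eq:conv_rate} from \cref{thm:global_convergence_sublinear}, namely $F_k \leq \|F'(x_0)\|_* D_0 \exp(-k/4) + \tfrac{2 D_0^2 \overline{\lambda}}{\alpha k}$, and apply the elementary inequality $\sqrt{a+b} \leq \sqrt{a} + \sqrt{b}$ to split the square root into the exponential and the $\mathcal{O}(1/k)$ contributions. This yields
\[
\min_{k \leq i \leq 2k} \|F'(x_{i+1})\|_* \;\leq\; \sqrt{\frac{\overline{\lambda}\,\|F'(x_0)\|_* D_0}{\alpha k}}\,\exp\!\left(-\frac{k}{8}\right) + \sqrt{\frac{2\, D_0^2\, \overline{\lambda}^2}{\alpha^2 k^2}}.
\]
Absorbing the constants $1 \leq 2$ and $\sqrt{2} \leq 4$ gives exactly \eqref{eq:conv_rate_grad}.

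There is no real obstacle here: the argument is essentially bookkeeping, with the only subtle point being the choice of the averaging window $\{k, \ldots, 2k\}$, which is what converts the $\mathcal{O}(1/k)$ decay of $F_k$ into an $\mathcal{O}(1/k)$ decay of the minimal subgradient norm (rather than the $\mathcal{O}(1/\sqrt{k})$ one would get from averaging over $\{0, \ldots, k-1\}$). If $\lambda_k \to 0$, the same proof together with the $\so(1/k)$ version of \eqref{eq:conv_rate} would yield an analogous $\so(1/k)$ rate in \eqref{eq:conv_rate_grad}.
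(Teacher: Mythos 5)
Your proposal is correct and follows essentially the same route as the paper: telescope \eqref{eq:functionValuesGradientRelation_convex} over the window $\{k,\ldots,2k\}$, bound the minimum subgradient norm by the average, substitute the objective rate \eqref{eq:conv_rate}, and split the square root. The paper's bookkeeping is slightly different cosmetically (it adds the nonnegative term $F(x_{2k+1})-F^*$ before telescoping and invokes $\sqrt{a^2+b^2}\leq 2(a+b)$ instead of $\sqrt{a+b}\leq\sqrt{a}+\sqrt{b}$ plus constant absorption), but the argument is identical.
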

\begin{proof}
The proof is an adaptation of \cite{NesterovSmallGradients} to our setting with a more complicated rate from \eqref{eq:conv_rate}.
Summing \eqref{eq:functionValuesGradientRelation_convex} from $k$ to $2k$, recalling from \eqref{eq:F_r_g_notaion} the notation $g_{i+1}  \coloneqq \norm{F'(x_{i+1})}{*}$, using  \eqref{eq:conv_rate}, and the bound $\lambda_k\leq \overline{\lambda}$ from \cref{lem:acceptance_inner_nonconv}, we obtain 
\begin{align*}
    &\frac{\alpha k}{\overline{\lambda}} \min_{i=k,\ldots,2k} \norm{ F'(x_{i+1})}{*}^2 \overset{\lambda_k\leq \overline{\lambda},\eqref{eq:F_k_decr}}{\leq} F(x_{2k+1})-F^* + \sum_{i={k}}^{2k}\frac{\alpha g_{i+1}^2}{\lambda_i} \\
    \overset{\eqref{eq:functionValuesGradientRelation_convex}}&{\leq} F(x_{2k+1})-F^* + \sum_{i={k}}^{2k} (F(x_{i}) - F(x_{i+1} )) =F(x_k) - F^* \overset{\eqref{eq:conv_rate}}{\leq}g_0D_0\exp\left(-\frac{k}{4}\right) + \frac{2D_0^2 \max\{2mL,\Lambda_0\}}{\alpha k}.
\end{align*}
Rearranging, using the inequality $\sqrt{a^2+b^2}\leq 2(a+b)$ for $a,b \geq0$, and recalling from \eqref{eq:lambda_bound} that $\overline{\lambda}=\max\{2mL,\Lambda_0\}$, we obtain \eqref{eq:conv_rate_grad}.

\end{proof}
Note that if in addition $\lambda_k \to 0$ as $k\to \infty$, the rate in the above result improves to $\so(1\slash k)$.

\section{Local linear convergence}
\label{sec:local convergence}
In this section, we return to the nonconvex setting and show local nonasymptotic linear convergence of \cref{alg:proximal_newton} to a local minimum of problem \eqref{eq:opt} under local versions of \cref{ass:main_inequality,ass:PL}, and a local QG condition. This result complements \cref{thm:global_convergence_nonconv_PL} in the sense that all the assumptions are made locally on a ball around a local minimum, whereas in \cref{thm:global_convergence_nonconv_PL} we made for example \cref{ass:main_inequality} globally on $\dom f$. Our motivation for this result comes from the classical results on SSN which make only local assumptions of semismoothness and non-degeneracy at a solution and prove local superlinear convergence, see, e.g., Theorem 2.11 in \cite{hintermuller2010semismooth}. The main result of this section is that under similar assumptions, but in a more general setting, \cref{alg:proximal_newton} converges to a local minimum. In the next section, we strengthen this result by showing that under additional assumptions including semismoothness the convergence is actually superlinear, like in the classical theory.

In this section, we let $x^*$ be an isolated local minimum in problem \eqref{eq:opt}.
This means that there exists some $R>0$ such that 
\begin{equation}
    \label{eq:isolated_min_prop}
    F(x) \geq F(x^*), \;\; \dist(x,S) =\norm{x-x^*}{} , \;\; \forall x \in  B_R(x^*).
\end{equation}

\begin{theorem}[Local linear convergence]
\label{thm:local_convergence_1}
    Let, for problem \eqref{eq:opt},   \cref{ass:basic,ass:subproblem} hold. Let $x^*$ be an isolated local minimum in \eqref{eq:opt} with the local minimal value $F(x^*)$ and let $R$ be such that \eqref{eq:isolated_min_prop} holds, \cref{ass:main_inequality} holds locally on $B_R(x^*)$ (instead of $\dom f$), the PL condition \cref{ass:PL} holds locally on $B_R(x^*)$ (instead of $\dom F$), %
    $H(x) \succeq \mu I$ for some  $\mu>0$  and for all $x \in B_R(x^*)$, 
    and the quadratic growth (QG) condition  holds locally on $B_R(x^*)$, i.e., 
        \begin{equation}
        \label{eq:QG_local_1}
        \frac{\mu}{2}\dist(x,S)^2 
        \leq F(x) - F(x^*), \quad \forall x  \in B_R(x^*) .
        \end{equation}
     Let also $(x_k)_{k \in \mathbb{N}}$ be the iterates of \cref{alg:proximal_newton} with starting point  $x_0\in \dom F$. Then, there exists $\Delta$ such that for any $x_0 \in B_R(x^*)$ with $\norm{F'(x_0)}{*}\leq \Delta$, $x_k$ converges to $x^*$ linearly and $F(x_k)$ converges to $F(x^*)$ linearly (explicit rates are given in the proof).  If in addition $\lambda_k\to 0$ as $k\to \infty$, the convergence is superlinear.
\end{theorem}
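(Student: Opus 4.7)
My plan is two-fold. First, I would show that for initial data $x_0 \in B_\rho(x^*)$ with $\|F'(x_0)\|_* \leq \Delta$, for suitably chosen $\rho \leq R$ and $\Delta > 0$, the entire sequence of iterates---and moreover every candidate evaluated during the backtracking linesearch---stays inside $B_R(x^*)$, so that the local hypotheses of the theorem are applicable throughout the run of \cref{alg:proximal_newton}. Second, once such confinement is established, I would simply replay the argument of \cref{thm:global_convergence_nonconv_PL} on the local level to obtain linear convergence of $F(x_k)$, and then transfer this rate to the iterates via the QG condition \eqref{eq:QG_local_1}.

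For the confinement argument, the decisive observation is that since $H(x_k) \succeq \mu I$ with $\mu > 0$, \cref{lem:inequality_lemma} applies at every trial and yields, for every $\lambda \geq 0$,
\begin{equation*}
    \|x_+(\lambda, x_k) - x_k\| \leq \frac{\|F'(x_k)\|_*}{\lambda + \mu} \leq \frac{\|F'(x_k)\|_*}{\mu}.
\end{equation*}
Together with the identity $F'(x_k) = f'(x_k) - f'(x_{k-1}) - H(x_{k-1})(x_k - x_{k-1}) - \lambda_{k-1}\mathcal{R}(x_k - x_{k-1})$ derived from \eqref{eq:F_div_def}, the local \cref{ass:main_inequality}, and the bound $\lambda_{k-1} \leq \overline{\lambda} \coloneqq \max\{2mL, \Lambda_0\}$ from \cref{lem:acceptance_inner_nonconv}, this also gives the a~posteriori estimate $\|F'(x_k)\|_* \leq (L + \overline{\lambda})\|x_k - x_{k-1}\|$ whenever $x_{k-1}, x_k \in B_R(x^*)$. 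I would then maintain by induction the two invariants $\|x_k - x^*\| \leq \rho$ and $F(x_k) - F(x^*) \leq \mu \rho^2 / 2$, for $\rho \coloneqq \min\{R/2,\; \mu R / (\mu + 2(L + \overline{\lambda}))\}$ and $\Delta \coloneqq \mu\rho$. The base case follows from the local PL condition applied at $x_0$, since $F(x_0) - F(x^*) \leq \|F'(x_0)\|_*^2 / (2\mu) \leq \mu \rho^2 / 2$. For the inductive step, the invariants combined with the above bound on $\|x_+ - x_k\|$ guarantee that every linesearch candidate lies in $B_R(x^*)$, so the local version of \cref{lem:acceptance_inner_nonconv} applies and terminates the inner loop with $\lambda_k \leq \overline{\lambda}$; the QG condition at $x_{k+1} \in B_R(x^*)$ combined with $F(x_{k+1}) \leq F(x_k)$ then preserves both invariants.

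Once confinement to $B_R(x^*)$ is proven, the per-iteration inequality \eqref{eq:functionValuesGradientRelation} combined with PL applied at $x_{k+1}$ yields---exactly as in the proof of \cref{thm:global_convergence_nonconv_PL}---the contraction
\begin{equation*}
    F(x_{k+1}) - F(x^*) \leq \left(1 - \frac{2\beta \alpha^2 \mu}{2\beta \alpha^2 \mu + \lambda_k}\right) \bigl(F(x_k) - F(x^*)\bigr),
\end{equation*}
so that $F(x_k) - F(x^*)$ converges to zero linearly at rate at least $\exp(-2\beta\alpha^2 \mu / (2\beta \alpha^2 \mu + \overline{\lambda}))$, and superlinearly whenever $\lambda_k \to 0$. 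Linear (respectively superlinear) convergence of the iterates $x_k \to x^*$ then follows directly from QG via $\|x_k - x^*\| \leq \sqrt{2(F(x_k) - F(x^*))/\mu}$, avoiding the more delicate telescoping argument needed in \cref{thm:global_convergence_nonconv_PL}.

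The principal difficulty is the first step: calibrating $\rho$ and $\Delta$ so that every tentative point $x_+(\lambda, x_k)$ examined during the backtracking remains inside $B_R(x^*)$, since the local \cref{ass:main_inequality} must hold on the full segment $[x_k, x_+(\lambda, x_k)]$ for \cref{lem:acceptance_inner_nonconv} to apply. The rest---the per-iteration contraction for $F(x_k)$ and its transfer to $\|x_k - x^*\|$ through QG---is essentially the global PL proof combined with an elementary QG estimate.
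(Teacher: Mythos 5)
Your proof is correct, and the overall two-step architecture—establish confinement to $B_R(x^*)$ first, then derive the linear rate from the PL contraction and transfer it to the iterates via QG—is the same as the paper's. The genuine difference is \emph{how} you establish confinement. The paper maintains three geometrically decaying inductive invariants, namely $\|F'(x_k)\|_*\le c\Delta$, $F(x_k)-F(x^*)\le \varkappa^k\Delta^2/(2\mu)$, and $\|x_k-x^*\|\le \varkappa^{k/2}\Delta/\mu$, and proves the first by contradiction: if $\|F'(x_{k+1})\|_*>c\Delta$, then the per-iteration decrease \eqref{eq:functionValuesGradientRelation} together with $F_k\le F_0\le\Delta^2/(2\mu)$ would force $F_{k+1}<0$. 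You instead maintain only two \emph{uniform} (non-decaying) invariants $\|x_k-x^*\|\le\rho$ and $F(x_k)-F(x^*)\le\mu\rho^2/2$, and replace the contradiction argument by the a~posteriori estimate $\|F'(x_k)\|_*\le(L+\overline{\lambda})\|x_k-x_{k-1}\|\le 2(L+\overline{\lambda})\rho$, read off directly from the algorithm's defining identity \eqref{eq:F_div_def} for $F'(x_k)$ together with the local \cref{ass:main_inequality} and $\lambda_{k-1}\le\overline{\lambda}$. Both mechanisms correctly ensure that every linesearch segment $[x_k, x_+(\lambda,x_k)]$ lies in $B_R(x^*)$ so that the local version of \cref{lem:acceptance_inner_nonconv} applies. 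Your route is arguably more direct—no contradiction is needed, and the uniform invariants are simpler to preserve (monotonicity of $F$ plus QG)—but the admissible starting neighbourhood is somewhat more restrictive: your $\Delta=\mu\rho$ scales like $\mu^2 R/(L+\overline{\lambda})$, whereas the paper's $\Delta\le\mu R/(1+c)$ scales like $\mu^{3/2}R/\sqrt{\overline{\lambda}}$, which is larger when $\overline{\lambda}\gg\mu$. One small point to be explicit about when fleshing this out: for $k\ge 1$ your a~posteriori bound requires that \cref{lem:acceptance_inner_nonconv} already applied at iteration $k-1$, so the inductive hypothesis must carry, in addition to the two stated invariants, that all linesearch candidates at iterations $0,\dots,k-1$ lay in $B_R(x^*)$; and for $k=0$ the bound $\|F'(x_0)\|_*\le\Delta=\mu\rho$ must be used in place of the a~posteriori estimate, which is exactly why your choice of $\rho$ includes the $R/2$ term.
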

\begin{proof}
    Define constants $\varkappa\coloneqq \frac{\overline{\lambda}}{\overline{\lambda} + 2\mu\beta\alpha^2  } <1$,  $c\coloneqq 1+\sqrt{\frac{\overline{\lambda}}{\mu\beta\alpha^2}}$, where $\overline{\lambda}$ is defined in \cref{lem:acceptance_inner_nonconv} (see  \eqref{eq:lambda_bound}) and choose 
    \begin{equation}
    \label{eq:thm:local_convergence_1_proof_1}
        0 < \Delta \leq \frac{\mu R}{1+c}.%
    \end{equation}
    Recall from \eqref{eq:F_r_g_notaion} that $g_k\coloneqq \norm{F'(x_k)}{*}$. We slightly abuse notation for the purpose of this proof and define $F_k\coloneqq F(x_k)-F(x^*)$, where $F(x^*)$ is the minimal value of $F$ on $B_R(x^*)$, cf. with global minimal value $F^*$ in \eqref{eq:F_r_g_notaion}. 
    We also note that the local PL condition on $B_R(x^*)$ implies that whenever $x_k \in B_R(x^*)$, we have that
    \begin{equation}
        \label{eq:PL_local_1_refined} %
        F(x_k)-F(x^*) \leq \frac{1}{2\mu} \norm{F'(x_k)}{*}^2.
    \end{equation}    
    Let $x_0 \in B_R(x^*)$ be such that $ \norm{F'(x_0)}{*}\leq \Delta$. 
    We proceed by induction and prove that for $k\geq0$,
    \begin{align}
        \norm{F'(x_k)}{*} &\leq c\Delta \label{eq:thm:local_convergence_1_proof_ind_1}, \\
        F_k=F(x_k)-F(x^*) &\leq \varkappa^k \frac{\Delta ^2}{2 \mu} \label{eq:thm:local_convergence_1_proof_ind_2}, \\
        \norm{x_k-x^*}{} &\leq \varkappa^{k/2} \frac{\Delta }{ \mu} \label{eq:thm:local_convergence_1_proof_ind_3}.
    \end{align}
    Since $x_0 \in B_R(x^*)$, we can apply the assumptions of the theorem to show that the base case holds. Indeed, since $c\geq1$, we have that $\norm{F'(x_0)}{*}\leq \Delta \leq c\Delta$. Since $x^*$ is an isolated local minimum and the QG condition holds, we have  
    \begin{equation}
    \label{eq:thm:local_convergence_1_proof_2}    
        \frac{\mu}{2}\norm{x_0-x^*}{}^2 \overset{\eqref{eq:isolated_min_prop},\eqref{eq:QG_local_1}}{\leq} F(x_0)-F(x^*) =F_0\overset{\eqref{eq:PL_local_1_refined}}{\leq}  \frac{1}{2\mu} \norm{F'(x_0)}{*}^2\leq \frac{\Delta ^2}{2 \mu} 
        \Rightarrow \norm{x_0-x^*}{} \leq \frac{\Delta}{\mu}. %
    \end{equation}
    Thus, \eqref{eq:thm:local_convergence_1_proof_ind_1},  \eqref{eq:thm:local_convergence_1_proof_ind_2}, \eqref{eq:thm:local_convergence_1_proof_ind_3} hold for $k=0$.

    We now proceed with the induction step. Let \eqref{eq:thm:local_convergence_1_proof_ind_1},  \eqref{eq:thm:local_convergence_1_proof_ind_2}, \eqref{eq:thm:local_convergence_1_proof_ind_3} hold. Note that \eqref{eq:thm:local_convergence_1_proof_ind_3}, \eqref{eq:thm:local_convergence_1_proof_1}, and $\varkappa<1$ imply that $\norm{x_k-x^*}{} \leq \varkappa^{k/2} \frac{\Delta }{ \mu} \leq R$ and, hence, $x_k \in B_R(x^*)$.  
    Thus, by the assumption that $H(x) \succeq \mu I$ on $B_R(x^*)$ with $ \mu > 0$, %
     recalling the notation $x_+(\lambda,x)$ from \eqref{eq:PLMSN_step}, we obtain from \cref{lem:inequality_lemma} that 
    \begin{equation*}
        \norm{x_+(\lambda,x_k)-x_k}{} \overset{\eqref{eq:rk_leq_gk_over_lambda_k_new}}{\leq}  \frac{\norm{F'(x_k)}{*}}{\lambda + \mu} \overset{\eqref{eq:thm:local_convergence_1_proof_ind_1}}{\leq} \frac{c\Delta}{\mu}, \quad \forall \lambda \geq 0. %
    \end{equation*}
    This in combination with \eqref{eq:thm:local_convergence_1_proof_ind_3} and $\varkappa <1$ gives  
    \begin{equation*}
        \norm{x_+(\lambda,x_k)-x^*}{} \leq \norm{x_+(\lambda,x_k)-x_k}{} + \norm{x_k-x^*}{} \leq    \frac{c\Delta}{\mu} +\frac{\Delta}{\mu}\overset{\eqref{eq:thm:local_convergence_1_proof_1}}{\leq} R, \quad \forall \lambda \geq 0.
    \end{equation*}
    Hence, for all $\lambda \geq 0$, we have $x_+(\lambda,x_k) \in B_R(x^*)$ and $[x_k,x_+(\lambda,x_k)] \subset B_R(x^*)$. By the assumption that \cref{ass:main_inequality} holds on $B_R(x^*)$, we observe that it holds also on $[x_k,x_+(\lambda,x_k)]$ for all $\lambda\geq0$. Thus, \cref{lem:acceptance_inner_nonconv} applies. Whence, the next iterate $x_{k+1}$ is well defined, we have $\lambda_k\leq \overline{\lambda}$ by inequality \eqref{eq:lambda_bound}, and
    \begin{equation}
    \label{eq:thm:local_convergence_1_proof_5}    
        F_k-F_{k+1}=F(x_k)-F(x_{k+1}) \overset{\eqref{eq:functionValuesGradientRelation}}{\geq}  \frac{\beta\alpha^2}{ \lambda_k}\norm{F'(x_{k+1})}{*}^2 \overset{\lambda_k\leq \overline{\lambda}}{\geq} \frac{\beta\alpha^2}{ \overline{\lambda}}\norm{F'(x_{k+1})}{*}^2 \geq 0.
    \end{equation}
    Since $x_{k+1}= x_+(\lambda_k,x_k)\in B_R(x^*)$, we can use the localised PL condition \cref{ass:PL} at $x_{k+1}$ and obtain
    \begin{equation}
    \label{eq:thm:local_convergence_1_proof_6}    
        F_k-F_{k+1} \overset{\eqref{eq:thm:local_convergence_1_proof_5}}{\geq}   \frac{\beta\alpha^2}{  \lambda_k}\norm{F'(x_{k+1})}{*}^2  \overset{\eqref{eq:PL_local_1_refined}}{\geq} \frac{2\mu\beta\alpha^2}{ \lambda_k} (F(x_{k+1})-F(x^*)) = \frac{2\mu\beta\alpha^2}{\lambda_k} F_{k+1}.
    \end{equation}
    Rearranging and using the QG condition, we obtain
    \begin{align}   
        \frac{\mu}{2}\norm{x_{k+1}-x^*}{}^2 \overset{\eqref{eq:isolated_min_prop},\eqref{eq:QG_local_1}}&{\leq} F(x_{k+1})-F(x^*) =F_{k+1}\overset{\eqref{eq:thm:local_convergence_1_proof_6}}{\leq} \frac{F_{k}}{1+\frac{2\mu\beta\alpha^2}{ \lambda_k}} =\frac{\lambda_kF_{k}}{\lambda_k + 2\mu\beta\alpha^2  } \overset{\lambda_k\leq \overline{\lambda},\eqref{eq:thm:local_convergence_1_proof_ind_2}}{\leq} \frac{\overline{\lambda}}{\overline{\lambda} + 2\mu\beta\alpha^2  } \varkappa^k \frac{\Delta^2}{2\mu}       
        \notag \\ 
        &= \varkappa^{k+1} \frac{\Delta^2}{2\mu} \Rightarrow \norm{x_{k+1}-x^*}{}  \leq \varkappa^{(k+1)/2} \frac{\Delta}{\mu}. \label{eq:thm:local_convergence_1_proof_7}  
    \end{align}
    Thus, we obtain \eqref{eq:thm:local_convergence_1_proof_ind_2} and \eqref{eq:thm:local_convergence_1_proof_ind_3} for $k+1$.
    Finally, we show \eqref{eq:thm:local_convergence_1_proof_ind_1} for $k+1$ by contradiction.  Assume the opposite, i.e., 
    \begin{equation}
    \label{eq:thm:local_convergence_1_proof_8}
        g_{k+1}\overset{\eqref{eq:F_r_g_notaion}}{\coloneqq }\norm{F'(x_{k+1})}{*}>c\Delta. 
    \end{equation}
    Since $x_k,x_{k+1} \in B_R(x^*)$, by the first inequality in \eqref{eq:isolated_min_prop},  definition of $F_k,F_{k+1}$, and by induction from \eqref{eq:thm:local_convergence_1_proof_5}, we have  $0 \leq F_{k+1} \leq F_{k} \leq F_0$. Hence,
    \begin{align*}   
       0 \leq  F_{k+1}\overset{\eqref{eq:thm:local_convergence_1_proof_5}}&{\leq} F_k - \frac{\beta\alpha^2}{ \overline{\lambda}}g_{k+1}^2 \overset{F_k\leq F_0}{\leq} F_0 - \frac{\beta\alpha^2}{ \overline{\lambda}}g_{k+1}^2 \overset{\eqref{eq:thm:local_convergence_1_proof_2},\eqref{eq:thm:local_convergence_1_proof_8}}{<}     \frac{\Delta^2}{2\mu} - \frac{\beta\alpha^2}{ \overline{\lambda}} c^2\Delta^2\\
        & <   \frac{\Delta^2}{2\mu} - \frac{\beta\alpha^2}{ \overline{\lambda}} \Delta^2\left(\sqrt{\frac{\overline{\lambda}}{\mu\beta\alpha^2}} \right) ^2< 0,
    \end{align*}
    which is a contradiction. Thus,  we have \eqref{eq:thm:local_convergence_1_proof_ind_1} for $k+1$. 
    
    We finish the proof by observing that \eqref{eq:thm:local_convergence_1_proof_ind_3} implies that $x_k \to x^*$ linearly:
    \[
        \norm{x_k-x^*}{} \leq  \left(1-\frac{2\mu\beta\alpha^2}{\overline{\lambda} + 2\mu\beta\alpha^2  }\right)^{k/2}\frac{R }{ 1+c} \overset{\eqref{eq:lambda_bound}}{\leq} \exp\left(-\frac{2\beta\alpha^2\mu}{4\beta\alpha^2\mu+2\max\{2mL,\Lambda_0\}} \cdot k\right)\cdot\frac{R }{ 1+c},
    \]
    and that \eqref{eq:thm:local_convergence_1_proof_ind_2} implies $F(x_k)$ converges to $F(x^*)$ linearly:
    \[
        F(x_k)-F(x^*) \leq  \left(1-\frac{2\mu\beta\alpha^2}{\overline{\lambda} + 2\mu\beta\alpha^2  }\right)^{k}\frac{\mu R^2 }{ 2(1+c)^2} \overset{\eqref{eq:lambda_bound}}{\leq} \exp\left(-\frac{2\beta\alpha^2\mu}{\beta\alpha^2\mu+\max\{2mL,\Lambda_0\}} \cdot k\right)\cdot\frac{\mu R^2 }{ 2(1+c)^2}.
    \]
    Moreover, from the first two inequalities and first two equalities in \eqref{eq:thm:local_convergence_1_proof_7}, we see that if $\lambda_k\to 0$ as $k\to \infty$, the convergence of both $x_k$ and $F(x_k)$ is superlinear.
\end{proof}

We remark that assuming QG condition \eqref{eq:QG_local_1},  $H(x) \succ 0$ on $B_R(x^*)$ and that $x^*$ is an isolated minimum  may be considered as a price for making \cref{ass:main_inequality} only locally, cf. \cref{thm:global_convergence_nonconv_PL}, where it is made globally. Indeed, we use those assumptions to guarantee that the iterates of the algorithm stay in the ball $B_R(x^*)$ where we can apply local \cref{ass:main_inequality}. In future works, it would be interesting to avoid these assumptions and prove local convergence based only on the PL condition in the spirit of \cite{frankel2015splitting}. In particular, we hope that this will allow us to omit the assumption that $x^*$ is isolated, whose main purpose currently is to establish the   equality in \eqref{eq:isolated_min_prop} that allows for using the QG condition. In fact, we expect that the QG condition follows from the assumed PL condition \cite{jin2023growth}, however, we were not able to find a proof of this statement needed for our precise setting. We also note that the assumption of $x^*$ being an isolated minimum is, in a sense, standard since many existing papers use (local) strong convexity, see e.g., \cite{doikov2024super,doikov2024gradient}, or other assumptions \cite{facchinei1995minimization} that imply that the convergence is established to an isolated (local) minimum.

As already mentioned, our inspiration for the above result comes from the classical results on the local convergence of SSN (with $\psi \equiv 0$), see, e.g., Theorem 2.11 in \cite{hintermuller2010semismooth}, where all the assumptions are made locally. In particular, it is assumed that the generalised derivative $H(x)$ is non-singular at $x^*$ and $f'$ is semismooth at $x^*$. Since $H(x)$ is non-singular at $x^*$ it is non-singular in some $B_R(x^*)$ and there is a $\mu>0$ that lower bounds the smallest eigenvalue of $H(x)$. This justifies the assumption $H(x) \succ 0$ on $B_R(x^*)$. Moreover, this means that $f$ behaves like a strongly convex functional locally, which implies both local PL and QG conditions. Thus, our setting is close to the classical one, yet we allow for some nonconvexity.

We conclude this subsection with the following technical result, which is used in the proofs of  superlinear convergence in the next  section. This result provides a relation of the distance to solution $\norm{x_k-x^*}{}$ to the distance between two consecutive iterates $\|x_{k+1} - x_k\|$, analogous to  \cref{lem:useful_bounds_new}, but under local assumptions. 

\begin{lem}\label{lem:useful_bounds_local}%
Let assumptions of \cref{thm:local_convergence_1} hold.
Then, there exist constants $c_1,c_2>0$ such that, for all $k\geq 0$,
\begin{equation}
        \norm{x_{k+1}-x^*}{} \leq c_1 \norm{x_k -x_{k+1}}{} \quad \text{and} \quad  \norm{x_k-x^*}{} \leq c_2 \norm{x_k -x_{k+1}}{}. \label{eq:useful_bounds_local}
    \end{equation}
\end{lem}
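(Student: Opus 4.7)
My plan is to adapt the strategy of Lemma \ref{lem:useful_bounds_new} to the local setting, exploiting the fact that Theorem \ref{thm:local_convergence_1} has already done most of the heavy lifting. The key observation is that in the proof of Theorem \ref{thm:local_convergence_1} we established (via the induction) that the iterates $(x_k)_{k\in\mathbb{N}}$ remain inside the ball $B_R(x^*)$, which is precisely the region on which the localised versions of \cref{ass:main_inequality}, \cref{ass:PL}, and the QG condition \eqref{eq:QG_local_1} are available. Therefore I can apply these local properties freely at each iterate.

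First, since $x_{k+1}\in B_R(x^*)$ and $x^*$ is an isolated minimiser with $\dist(x_{k+1},S)=\norm{x_{k+1}-x^*}{}$ by \eqref{eq:isolated_min_prop}, the QG condition \eqref{eq:QG_local_1} yields $\tfrac{\mu}{2}\norm{x_{k+1}-x^*}{}^2 \leq F(x_{k+1})-F(x^*)$. Next, applying the local PL inequality at $x_{k+1}\in B_R(x^*)$ gives $F(x_{k+1})-F(x^*)\leq \tfrac{1}{2\mu}\norm{F'(x_{k+1})}{*}^2 = \tfrac{1}{2\mu} g_{k+1}^2$. Now I invoke \eqref{eq:g_k_leq_r_k} from \cref{lem:functionValuesGradientRelation} (applicable since the acceptance criteria hold in view of \cref{lem:acceptance_inner_nonconv}, which is valid on $[x_k,x_+(\lambda,x_k)]\subset B_R(x^*)$ as shown in the proof of Theorem \ref{thm:local_convergence_1}) together with the bound $\lambda_k\leq\overline{\lambda}$ from \eqref{eq:lambda_bound} to conclude
\[
g_{k+1}\leq \frac{\lambda_k r_k}{\alpha}\leq \frac{\overline{\lambda}}{\alpha}\norm{x_k-x_{k+1}}{}.
\]
Chaining the three estimates, I obtain
\[
\norm{x_{k+1}-x^*}{}\leq \frac{\overline{\lambda}}{\alpha\mu}\norm{x_k-x_{k+1}}{},
\]
which is the first inequality in \eqref{eq:useful_bounds_local} with $c_1=\overline{\lambda}/(\alpha\mu)$.

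Finally, the second inequality follows immediately from the triangle inequality: $\norm{x_k-x^*}{}\leq \norm{x_k-x_{k+1}}{}+\norm{x_{k+1}-x^*}{}\leq (1+c_1)\norm{x_k-x_{k+1}}{}$, so I set $c_2=1+c_1$. There is no real obstacle here---unlike \cref{lem:useful_bounds_new}, which required a summation argument because the global proof did not give a direct one-step bound, the present local setting allows a direct one-step bound through the combination of local QG, local PL, and the uniform upper bound on $\lambda_k$; the only delicate point is the implicit use of the fact (already proved inside Theorem \ref{thm:local_convergence_1}) that both $x_k$ and $x_{k+1}$ lie in $B_R(x^*)$ so that all three local assumptions apply simultaneously.
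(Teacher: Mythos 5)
Your proof is correct and follows essentially the same chain of inequalities as the paper's own argument (isolated minimum plus QG, then local PL, then \eqref{eq:g_k_leq_r_k} together with the bound $\lambda_k\le\overline{\lambda}$, then triangle inequality), arriving at the identical constants $c_1=\overline{\lambda}/(\alpha\mu)$ and $c_2=1+c_1$.
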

\begin{proof}
As shown in the proof of \cref{thm:local_convergence_1}, we have that $x_k \in B_R(x^*)$ for all $k\geq 0$. Thus, 
\begin{equation*}
    \frac{\mu}{2}\norm{x_{k+1}-x^*}{}^2 \overset{\eqref{eq:isolated_min_prop},\eqref{eq:QG_local_1}}{\leq} F(x_{k+1})-F(x^*) \overset{\eqref{eq:PL_local_1_refined}}{\leq}  \frac{1}{2\mu} \norm{F'(x_{k+1})}{*}^2.
\end{equation*}
By \eqref{eq:g_k_leq_r_k} and the  bound $\lambda_k \leq \overline{\lambda}$ from \cref{lem:acceptance_inner_nonconv}, we have
\begin{equation*}
     \norm{F'(x_{k+1})}{*}\overset{\eqref{eq:g_k_leq_r_k}}{\leq} \frac{\lambda_k}{\alpha}\norm{x_k -x_{k+1}}{} %
     \leq \frac{\overline{\lambda}}{\alpha}\norm{x_k -x_{k+1}}{},
\end{equation*}
which in combination with the previous inequality gives the first inequality in \eqref{eq:useful_bounds_local} with the constant $c_1=\frac{\overline{\lambda}}{\alpha \mu}$. The second inequality with $c_2=1+c_1$ follows from the triangle inequality.
\end{proof}

\section{Superlinear convergence under higher regularity}
\label{sec:fast}
In this section, based on the additional assumptions on the regularity of $f'$, we further refine the convergence rates established in previous sections. We start with several technical lemmas that are used in what follows. Then, we show that if $\HH$ is finite dimensional and $f$ is twice continuously differentiable, i.e., $f \in C^2$, then in \cref{alg:proximal_newton} $\lambda_k\to0$ and the convergence rates proved in previous sections asymptotically improve. After that, we move on to the main setting of the paper and consider the behaviour of \cref{alg:proximal_newton} under a semismoothness assumption, which allows us to obtain its superlinear convergence. Finally, based on the active manifold identification framework, we show that superlinear convergence can also occur when $f$ is not locally $C^2$, but instead satisfies a combination of partial smoothness and semismoothness of the derivative. To simplify the proofs, we focus on the setting when $F$ is locally convex, but this assumption can be slightly relaxed; we defer the details to future work.

\subsection{Technical preliminaries}
\label{sec:further_tools}
We start with several technical results that are crucial for establishing superlinear convergence of \cref{alg:proximal_newton}. The first one establishes several useful inequalities for $x_+(\lambda,x_k)$.

\begin{lem}\label{lem:good_bounds}%
Consider iteration $k \geq 0$ of \cref{alg:proximal_newton}
and let $0<\lambda < \lambda_k$. Let  $H(x_k) \succeq 0 $ and, for some constant $c>0$, the following inequality hold:
\begin{equation}
    \norm{x_{k+1}-x^*}{} \leq c \norm{x_k -x_{k+1}}{}. 
        \label{eq:useful_bounds_abstract_first_1}
\end{equation}
Then, we have
         \begin{align}
             \| x_{+}(\lambda,x_k) - x^*\|  &\leq
             \frac{(2c+1)\lambda_k -(1+c)\lambda}{\lambda_k}\| x_{+}(\lambda,x_k)  - x_k\|,\label{eq:good_bound_1}\\
             \norm{x_k-x^*}{} &\leq  \frac{(2c+2)\lambda_k -(1+c)\lambda}{\lambda_k}\| x_{+}(\lambda,x_k)  - x_k\|.\label{eq:good_bound_2}
         \end{align}

\end{lem}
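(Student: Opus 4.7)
The plan is to derive both inequalities by combining the sensitivity estimate in \cref{lem:lambda_dependence} with the hypothesis \eqref{eq:useful_bounds_abstract_first_1} and a couple of triangle inequalities. The key observation is that $x_{k+1} = x_+(\lambda_k, x_k)$ by construction (it is the accepted point in \cref{step:k+1_update} of \cref{alg:proximal_newton}), so \cref{lem:lambda_dependence} directly compares the current iterate $x_{k+1}$ with the auxiliary point $x_+(\lambda, x_k)$.

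First I would apply \cref{lem:lambda_dependence}, which is available since $H(x_k) \succeq 0$, with the pair $(\lambda, \lambda_k)$, obtaining
\[
\|x_+(\lambda, x_k) - x_{k+1}\| = \|x_+(\lambda, x_k) - x_+(\lambda_k, x_k)\| \leq \frac{\lambda_k - \lambda}{\lambda_k}\, \|x_k - x_+(\lambda, x_k)\|.
\]
Next, by the triangle inequality,
\[
\|x_k - x_{k+1}\| \leq \|x_k - x_+(\lambda, x_k)\| + \|x_+(\lambda, x_k) - x_{k+1}\| \leq \frac{2\lambda_k - \lambda}{\lambda_k}\, \|x_k - x_+(\lambda, x_k)\|.
\]

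To prove \eqref{eq:good_bound_1}, I would again use the triangle inequality together with the hypothesis \eqref{eq:useful_bounds_abstract_first_1}:
\[
\|x_+(\lambda, x_k) - x^*\| \leq \|x_+(\lambda, x_k) - x_{k+1}\| + \|x_{k+1} - x^*\| \leq \|x_+(\lambda, x_k) - x_{k+1}\| + c\, \|x_k - x_{k+1}\|,
\]
and substituting the two bounds above yields
\[
\|x_+(\lambda, x_k) - x^*\| \leq \frac{\lambda_k - \lambda + c(2\lambda_k - \lambda)}{\lambda_k}\, \|x_k - x_+(\lambda, x_k)\| = \frac{(2c+1)\lambda_k - (1+c)\lambda}{\lambda_k}\, \|x_k - x_+(\lambda, x_k)\|,
\]
which is exactly \eqref{eq:good_bound_1}. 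Finally, \eqref{eq:good_bound_2} follows from one more triangle inequality,
\[
\|x_k - x^*\| \leq \|x_k - x_+(\lambda, x_k)\| + \|x_+(\lambda, x_k) - x^*\|,
\]
and plugging in \eqref{eq:good_bound_1} immediately gives the claimed coefficient $((2c+2)\lambda_k - (1+c)\lambda)/\lambda_k$.

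There is no real obstacle: the calculation is essentially mechanical once one recognises that \cref{lem:lambda_dependence} gives precisely the right comparison between $x_{k+1}$ and $x_+(\lambda,x_k)$. The only subtlety is being careful about the assumption $\lambda < \lambda_k$ (strict), which guarantees that \cref{lem:lambda_dependence} applies with a nontrivial contraction factor $(\lambda_k - \lambda)/\lambda_k$, and the fact that $\lambda_k > 0$ is guaranteed by the backtracking inner loop.
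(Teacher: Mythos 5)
Your proof is correct and uses the same ingredients as the paper's proof: \cref{lem:lambda_dependence} (for comparing $x_+(\lambda,x_k)$ with $x_{k+1}=x_+(\lambda_k,x_k)$), hypothesis \eqref{eq:useful_bounds_abstract_first_1}, and triangle inequalities. The only cosmetic difference is that you first bound $\|x_k-x_{k+1}\|$ by $\frac{2\lambda_k-\lambda}{\lambda_k}\|x_k-x_+(\lambda,x_k)\|$ and then substitute, whereas the paper aggregates the coefficients before invoking \cref{lem:lambda_dependence} at the end; both yield the identical coefficient $\frac{(2c+1)\lambda_k-(1+c)\lambda}{\lambda_k}$.
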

Before we start the proof, for the reader's convenience we note that when $\lambda$ is chosen as $\lambda_k\slash 2$, the constants in \eqref{eq:good_bound_1} and \eqref{eq:good_bound_2} are   $\frac{1+3c}{2}$ and $\frac{3+3c}{2}$ respectively.
\begin{proof}[Proof of \cref{lem:good_bounds}]
Applying two times the triangle inequality and one time \cref{lem:lambda_dependence}, recalling from \cref{step:k+1_update} of \cref{alg:proximal_newton} and \eqref{eq:PLMSN_step} that $x_{k+1}=x_+(\lambda_k, x_k)$, we obtain
\begin{align*}
    \| x_{+}(\lambda,x_k) - x^*\| &\leq \| x_{+}(\lambda,x_k) - x_{k+1}\| + \|x_{k+1} -x^*\|\\
    \overset{\eqref{eq:useful_bounds_abstract_first_1}}&{\leq} \| x_{+}(\lambda,x_k) - x_{k+1}\| + c\|x_{k+1} -x_k\|\\
    &\leq \| x_{+}(\lambda,x_k) - x_{k+1}\| + c(\| x_{+}(\lambda,x_k) - x_{k+1}\|+\|x_{+}(\lambda,x_k) -x_k\|)\\
    &=(1+c)\| x_{+}(\lambda,x_k) - x_{k+1}\|+c\|x_{+}(\lambda,x_k) -x_k\|)\\
   \overset{\eqref{eq:lambda_dependence}}&{\leq}  (1+c)\frac{\lambda_k-\lambda}{\lambda_k}\|x_{+}(\lambda,x_k)  - x_k\| + c\|x_{+}(\lambda,x_k) -x_k\|, 
\end{align*}
which is \eqref{eq:good_bound_1}.
Applying once again the triangle inequality, we obtain \eqref{eq:good_bound_2}:
\begin{align*}
    \norm{x_k-x^*}{} &\leq \norm{x_k-x_+(\lambda, x_k)}{} + \norm{x_+(\lambda, x_k)-x^*}{}\\
    \overset{\eqref{eq:good_bound_1}}&{\leq}   \norm{x_k-x_+(\lambda, x_k)}{}  + \frac{(2c+1)\lambda_k -(1+c)\lambda}{\lambda_k}\| x_{+}(\lambda,x_k)  - x_k\|.
\end{align*}
\end{proof} 
Note that the technical condition \eqref{eq:useful_bounds_abstract_first_1} is guaranteed by \cref{lem:useful_bounds_new} or \cref{lem:useful_bounds_local}. The next result establishes the convergence of non-accepted trial points $x_+(\lambda_k/2,x_k)$ to $x^*$ when accepted points $x_k$ converge to $x^*$.

\begin{lem}\label{lem:strange_convergence}
Let $(x_k)_{k \in \mathbb{N}}$ be the iterates of \cref{alg:proximal_newton} and $x_k \to x^*$ as $k \to \infty$. Let also $H(x_k) \succeq 0 $ 
for sufficiently large $k$. Then, $x_+(\lambda_k/2,x_k) \to x^*$ as $k \to \infty$.
\end{lem}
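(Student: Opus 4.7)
The plan is to combine the sensitivity estimate from \cref{lem:lambda_dependence} (with the choice $\lambda = \lambda_k/2$ and $\lambda' = \lambda_k$) with two applications of the triangle inequality. The key observation is that by construction in \cref{alg:proximal_newton}, $x_{k+1} = x_+(\lambda_k, x_k)$, so $x_+(\lambda_k/2, x_k)$ and $x_{k+1}$ are two solutions of the subproblem \eqref{eq:PLMSN_step} at the same base point $x_k$ but with different regularisation parameters, and \cref{lem:lambda_dependence} quantifies exactly how far apart they can be.

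First, I would verify that $x_+(\lambda_k/2, x_k)$ is well defined for all sufficiently large $k$: since $H(x_k) \succeq 0$ and $\lambda_k/2 > 0$, the model $\widetilde{F}(\,\cdot\,; x_k, \lambda_k/2)$ from \eqref{eq:PLMSN_step} is strongly convex, so it admits a unique global minimiser, which in particular is a stationary point satisfying the first-order condition \eqref{eq:prox_step_alg_FO}. Applying \cref{lem:lambda_dependence} to this point with $\lambda = \lambda_k/2$ and $\lambda' = \lambda_k$ yields
\[
\|x_+(\lambda_k/2, x_k) - x_{k+1}\| \leq \frac{\lambda_k - \lambda_k/2}{\lambda_k}\, \|x_k - x_+(\lambda_k/2, x_k)\| = \tfrac{1}{2}\, \|x_k - x_+(\lambda_k/2, x_k)\|.
\]

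Next, the triangle inequality gives
\[
\|x_k - x_+(\lambda_k/2, x_k)\| \leq \|x_k - x_{k+1}\| + \|x_{k+1} - x_+(\lambda_k/2, x_k)\| \leq \|x_k - x_{k+1}\| + \tfrac{1}{2}\,\|x_k - x_+(\lambda_k/2, x_k)\|,
\]
and rearranging produces the bound $\|x_k - x_+(\lambda_k/2, x_k)\| \leq 2 \|x_k - x_{k+1}\|$. Since $x_k \to x^*$ by hypothesis, we also have $x_{k+1} \to x^*$, and hence $\|x_k - x_{k+1}\| \to 0$.

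Finally, a last triangle inequality delivers the conclusion:
\[
\|x_+(\lambda_k/2, x_k) - x^*\| \leq \|x_+(\lambda_k/2, x_k) - x_k\| + \|x_k - x^*\| \leq 2\|x_k - x_{k+1}\| + \|x_k - x^*\| \longrightarrow 0
\]
as $k \to \infty$. There is no real obstacle here beyond ensuring that \cref{lem:lambda_dependence} is applicable, which follows directly from $H(x_k) \succeq 0$; the rest is bookkeeping with the triangle inequality.
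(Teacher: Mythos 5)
Your proof is correct and takes essentially the same approach as the paper: both invoke \cref{lem:lambda_dependence} with $\lambda' = \lambda_k$ (so that $x_+(\lambda',x_k) = x_{k+1}$) and then chain two triangle inequalities to bound $\|x_+(\lambda_k/2,x_k) - x^*\|$ by $2\|x_k - x_{k+1}\| + \|x_k - x^*\|$. The only cosmetic difference is that the paper works with a general $\lambda \in (0,\lambda_k]$ and substitutes $\lambda = \lambda_k/2$ at the end, while you substitute it from the outset; your remark on well-definedness of $x_+(\lambda_k/2,x_k)$ via strong convexity of the model is a harmless addition.
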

\begin{proof}
For any sufficiently large $k$ and $0<\lambda \leq \lambda_k$, by the triangle inequality, the definition $x_{k+1}=x_+(\lambda_k,x_k)$ in \cref{step:k+1_update} of \cref{alg:proximal_newton} and \eqref{eq:PLMSN_step}, and \cref{lem:lambda_dependence}, we have
 \begin{align*}
 \| x_k -x_+(\lambda,x_k)\| \leq \|x_{k+1} - x_+(\lambda,x_k)\| + \|x_{k+1} - x_k\| \overset{\eqref{eq:lambda_dependence}}{\leq} \frac{\lambda_k-\lambda}{\lambda_k}\| x_k -x_+(\lambda,x_k)\| + \|x_{k+1} - x_k\|.
 \end{align*}
Rearranging, we obtain
\begin{equation*}
    \| x_k -x_+(\lambda,x_k)\| \leq \frac{\lambda_k}{\lambda} \| x_{k+1} -x_k\|.
\end{equation*}
Now, utilising again the triangle inequality, we calculate
\begin{align*}
    \|x_+(\lambda,x_k) - x^*\| &\leq \|x_+(\lambda,x_k)-x_{k}\| + \|x_{k} -x^*\|\leq \frac{\lambda_k}{\lambda} \| x_{k+1} -x_k\| + \|x_{k} -x^*\|.
\end{align*}
The claim follows by taking $\lambda = \lambda_k/2$, using the upper bound $\lambda_k \leq \overline{\lambda}$ from \cref{lem:acceptance_inner_nonconv}, and taking the limit.%
\end{proof}

The following result will be crucial to show that in \cref{alg:proximal_newton} $\lambda_k \to 0$ when $f$ has additional regularity. On a high level, it can be interpreted as $\lambda_k \to 0$ if the constant $L$ in \cref{ass:main_inequality} goes to 0 around the trajectory of \cref{alg:proximal_newton}. It is, in some sense, natural to expect this from \eqref{eq:lambda_bound}, and we now prove it in rigour. 
\begin{lem}\label{lem:helper_lambdak_zero}
Let $F$ be locally convex around $x^*$. Let $(x_k)_{k \in \mathbb{N}}$ be the iterates of \cref{alg:proximal_newton} and $x_k \to x^*$ as $k \to \infty$. Let also $H(x_k) \succeq 0$  for sufficiently large $k$.  
         If there exists a sequence $(\varepsilon_k)_{k \in \mathbb{N}} \subset \R_+$ such that $\varepsilon_k \to 0$ and, for sufficiently large $k$,
         \begin{align}\label{eq:eps_condition}
            \norm{ f'(x_{+}(\lambda_k/2, x_k)) - f'(x_{k}) - H(x_k)(x_{+}(\lambda_k/2, x_k) - x_k) }{*} 
            &\leq \varepsilon_k \norm{x_k - x_+(\lambda_k/2,x_k)}{},
        \end{align}
        then, we have $\lambda_k \to 0$ as $k \to \infty$. 
        \end{lem}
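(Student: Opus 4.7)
The strategy is to establish the recurrence
\begin{equation*}
\lambda_k \;\leq\; \max\bigl\{C\varepsilon_k,\; \lambda_{k-1}/2\bigr\} \qquad\text{for all sufficiently large } k,
\end{equation*}
with $C = C(\alpha,\beta)$ a fixed constant depending only on the algorithmic parameters. Given this recurrence, since $\varepsilon_k \to 0$ and $(\lambda_k)$ is bounded by $\bar\lambda$ thanks to \cref{lem:acceptance_inner_nonconv}, an elementary induction yields $\lambda_k \to 0$: fix $\eta > 0$ and choose $K$ with $C\varepsilon_k < \eta$ for $k \geq K$; either $\lambda_{k-1}/2 \geq C\varepsilon_k$ and the recurrence gives $\lambda_k \leq \lambda_{k-1}/2$ (so $\lambda_k$ halves), or $\lambda_k \leq C\varepsilon_k < \eta$. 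In either case $\lambda_k < \eta$ occurs after finitely many steps and, once below $\eta$, the recurrence keeps it there.

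To derive the recurrence, set $y_k := x_+(\lambda_k/2, x_k)$. Since $x_k \to x^*$ and $H(x_k) \succeq 0$ eventually, \cref{lem:strange_convergence} gives $y_k \to x^*$; therefore, for $k$ sufficiently large, both $x_k$ and $y_k$ lie in a convex neighbourhood of $x^*$ on which $F$ is convex, and in particular $F$ is convex on the segment $[x_k, y_k]$. The hypothesis \eqref{eq:eps_condition} allows us to apply \cref{lem:inequality_lemma_2} with $\varphi(r) := \varepsilon_k r$ at the regularisation value $\lambda_k/2$, yielding
\begin{equation*}
\langle F'(y_k),\, x_k - y_k\rangle \;\geq\; \tfrac{1}{\lambda_k}\|F'(y_k)\|_*^2 \;+\; \tfrac{\lambda_k}{4}\|x_k - y_k\|^2 \;-\; \tfrac{\varepsilon_k^2}{\lambda_k}\|x_k - y_k\|^2.
\end{equation*}

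Define $C := \max\bigl\{2,\; (1/4 - \beta/2)^{-1/2}\bigr\}$, which is finite because $\beta \leq (m-1)/(2m) < 1/2$. I claim that if $\lambda_k \geq C\varepsilon_k$, then both acceptance conditions in \eqref{eq:stop_cond_alg} are satisfied at the trial value $\lambda = \lambda_k/2$. For the first condition $\langle F'(y_k), x_k - y_k\rangle \geq (2\alpha/\lambda_k)\|F'(y_k)\|_*^2$: the inequality $\alpha \leq 1/2$ implies $2\alpha/\lambda_k \leq 1/\lambda_k$, and $\lambda_k \geq 2\varepsilon_k$ implies $\lambda_k/4 \geq \varepsilon_k^2/\lambda_k$, so the displayed inequality gives the bound directly. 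For the second condition $F(x_k) - F(y_k) \geq (\beta\lambda_k/2)\|x_k - y_k\|^2$, the local convexity of $F$ on $[x_k, y_k]$ yields $F(x_k) - F(y_k) \geq \langle F'(y_k), x_k - y_k\rangle$; combining this with the displayed inequality, dropping the nonnegative $\|F'(y_k)\|_*^2/\lambda_k$ term, and invoking $\lambda_k^2(1/4 - \beta/2) \geq \varepsilon_k^2$ delivers exactly the required lower bound.

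Contrapositively: if $j_k \geq 1$, then the trial $\lambda_k/2 = 2^{j_k - 1}\Lambda_k$ was tested and rejected by the inner loop, so at least one acceptance condition failed at $y_k$, forcing $\lambda_k < C\varepsilon_k$; otherwise $j_k = 0$ and $\lambda_k = \Lambda_k = \lambda_{k-1}/2$ by \cref{step:k+1_update}. This establishes the recurrence. The principal technical obstacle is the second acceptance condition: the pointwise hypothesis \eqref{eq:eps_condition} controls the Newton residual only at the single point $y_k$, not along the segment $[x_k, y_k]$ as required by \cref{lem:upper_bound_f}, so the standard quadratic upper bound on $F(y_k) - F(x_k)$ with Lipschitz-like constant $\varepsilon_k$ (used analogously in the proof of \cref{lem:acceptance_inner_nonconv}) is unavailable; the local convexity hypothesis on $F$ is precisely what circumvents this by replacing the descent lemma with the subgradient inequality.
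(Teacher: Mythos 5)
Your proof is correct and follows essentially the same route as the paper: invoke \cref{lem:strange_convergence} for $x_+(\lambda_k/2,x_k)\to x^*$, apply \cref{lem:inequality_lemma_2} with $\varphi(r)=\varepsilon_k r$ and $\lambda=\lambda_k/2$, use local convexity of $F$ (in place of the unavailable descent lemma) to bound $F(x_k)-F(y_k)$ by the inner product, derive by contraposition that a rejected trial forces $\lambda_k < C\varepsilon_k$ while $j_k=0$ gives $\lambda_k=\lambda_{k-1}/2$, and then pass to the limit; your constant $C=\max\{2,(1/4-\beta/2)^{-1/2}\}$ is algebraically identical to the paper's $\max\{2,2/\sqrt{1-2\beta}\}$. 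The only cosmetic departure is the tail argument: the paper handles the recurrence by partitioning the indices into $\mathcal{I}=\{k:j_k>0\}$ and its complement and tracking the geometric decay between consecutive elements of $\mathcal{I}$, whereas you use a slightly cleaner direct induction showing that $\lambda_k$ eventually drops below any $\eta>0$ and, once below, stays below; both are valid.
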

        \begin{proof} For brevity, given an index $k \geq 0$, we denote by $x_{k,+} \coloneqq x_+(\lambda_k/2,x_k)$  the solution of the auxiliary problem \eqref{eq:PLMSN_step}   
        given current iterate $x_k$ and accepted regularisation parameter $\lambda_k>0$. Moreover, we set $r_{k,+}\coloneqq  \norm{x_k - x_{k,+}}{}$. 
        By assumption, there exists $K_1$ such that \eqref{eq:eps_condition} holds for all $k\geq K_1$. Thus, for any $k\geq K_1$, using \eqref{eq:eps_condition}, \cref{lem:inequality_lemma_2} with $x=x_k$, $x_+=x_{k,+}$, $\varphi(r) = \varepsilon_k r$, and $\lambda = \lambda_k/2$, and that $\alpha \leq 1/2$, we obtain
        \begin{align}
       \langle F'(x_{k,+}),x_k - x_{k,+} \rangle
        \overset{\eqref{eq:progress_prelim},\eqref{eq:eps_condition}}&{\geq} \frac{1}{\lambda_k}\norm{F'(x_{k,+})}{*}^2 + \left(\frac{\lambda^2_k- 4\varepsilon_k^2}{4\lambda_k}\right) r_{k,+}^2\nonumber\\ 
        &\geq \frac{2\alpha}{\lambda_k}\norm{F'(x_{k,+})}{*}^2 + \left(\frac{\lambda^2_k- 4\varepsilon_k^2}{4\lambda_k}\right) r_{k,+}^2.\label{eq:final_inequality_superlinear} 
    \end{align}
    By the local convexity of $F$ around $x^*$, there exists a ball $B_R(x^*)$ such that $F$ is convex on that ball. Since by the assumptions of the lemma $x_k \to x^*$ and, for sufficiently large $k$, $H(x_k) \succeq 0$, we have by \cref{lem:strange_convergence} that $ x_{k,+} \to x^*$ as well. Thus, there exists $K_2$ s.t. for all $k \geq K_2$, we have $x_k,x_{k,+}  \in B_R(x^*)$. Applying convexity of $F$ on $B_R(x^*)$, using that the first term on the right-hand side in \eqref{eq:final_inequality_superlinear} is nonnegative, and adding and subtracting $\beta\lambda_kr_{k,+}^2/2$, we obtain from \eqref{eq:final_inequality_superlinear}  for all $k \geq \max\{K_1,K_2\}$
    \begin{align}
       F(x_k) - F(x_{k,+}) \geq \langle F'(x_{k,+}),x_k - x_{k,+} \rangle
        &\geq  \frac{\beta\lambda_kr_{k,+}^2}{2} + \left(\frac{\lambda^2_k- 4\varepsilon_k^2 -2\beta\lambda_k^2}{4\lambda_k}\right) r_{k,+}^2\nonumber\\
        &=  \frac{\beta\lambda_kr_{k,+}^2}{2} + \left(\frac{(1-2\beta)\lambda^2_k- 4\varepsilon_k^2}{4\lambda_k}\right) r_{k,+}^2.
        \label{eq:final_inequality_superlinear_2}  
    \end{align}
    From now on, we assume $k \geq \max\{K_1,K_2\}$. From \cref{step:k+1_update} of \cref{alg:proximal_newton}, we know that $\lambda_k =2^{j_k}\Lambda_k$ for some $j_k\geq 0$. 
    Consider the two cases: when \( j_k = 0 \) and when \( j_k > 0 \). In the first case, i.e., when \( j_k = 0 \), acceptance occurs immediately in the first round of the inner loop. By \cref{step:k+1_update} of \cref{alg:proximal_newton}, \( \Lambda_k = \lambda_{k-1}/2 \), and hence in this case \( \lambda_k = 2^{j_k} \Lambda_k = 2^0 \Lambda_k = \Lambda_k =\lambda_{k-1}/2 \).
    In the second case, i.e., when \( j_k > 0 \), \( \lambda_k/2 \) must have been rejected. This implies that the candidate point \( x_+\left( \lambda_k/2, x_k \right) \) and candidate regularisation parameter $\lambda_k/2$ did not satisfy the acceptance conditions \eqref{eq:stop_cond_alg}. Thus, we have
    \begin{equation}
    \label{eq:lambda_not_accepted}
        \langle F'(x_{k,+}),x_k - x_{k,+} \rangle < \frac{2\alpha}{\lambda_k}\norm{F'(x_{k,+})}{*}^2 \quad \text{ or } \quad F(x_k) - F(x_{k,+}) < \frac{\beta\lambda_kr_{k,+}^2}{2}. 
    \end{equation}
    If the first inequality in \eqref{eq:lambda_not_accepted}  holds, from \eqref{eq:final_inequality_superlinear} we must have \( \lambda_k \leq 2\varepsilon_k \) since otherwise, we are led to a contradiction. If the second inequality  in \eqref{eq:lambda_not_accepted} holds, then in the same way, \eqref{eq:final_inequality_superlinear_2} and the fact $\beta \leq (m-1)/(2m) <1/2$ (see \cref{step:input} of \cref{alg:proximal_newton}) imply that $\lambda_k \leq 2\varepsilon_k\slash \sqrt{1-2\beta}$.
    Thus, in both cases we have $\lambda_k\leq \max\{2, 2\slash \sqrt{1-2\beta}\}\varepsilon_k$. Combining this with the case \( j_k = 0 \) and denoting $\gamma\coloneqq  \max\{2, 2\slash \sqrt{1-2\beta}\}$, we have, for $k\geq \max\{K_1,K_2\}$,
    \begin{align*}
        \lambda_k \leq \begin{cases}
            \frac{\lambda_{k-1}}{2}, & j_k = 0,\\
            \gamma\varepsilon_k, & j_k > 0.
        \end{cases}
    \end{align*}
    Now let us show that this implies $\lambda_k \to 0$. Define $\mathcal{I} \coloneqq  \{k \geq 0 : j_k > 0\} = \{k_1 <   k_2 <  \dots\}.$ Thus if $k \in \mathcal{I}$, $\lambda_k \leq \gamma\varepsilon_k$, whereas if $k \not\in \mathcal{I}$, we have $\lambda_k \leq \lambda_{k-1}\slash 2$. Now, if $|\mathcal{I}| < \infty$, then for sufficiently large $k$, we have $\lambda_k \leq \lambda_{k-1}\slash 2$ and hence $\lambda_k \to 0$. So let us suppose that $|\mathcal{I}| = \infty$. Fix $\delta >0$ and choose $i_0 $ such that $\gamma\varepsilon_k \leq \delta$ for all $k \geq k_{i_0}$. Take $k \geq k_{i_0}$. Then either $k$ belongs to $\mathcal{I}$ or not. If $k \not\in \mathcal{I}$, there exists $i\geq i_0$ such that $k_i < k < k_{i+1}$ with $k_i,k_{i+1} \in \mathcal{I}$ (recall $\{k_i\}$ from the definition of $\mathcal{I}$). Thus, we have
    \[
    \lambda_k \leq \frac{\lambda_{k_i}}{2^{k-k_i}} \leq \frac{\gamma\varepsilon_{k_i}}{2^{k-k_i}} \leq \gamma\varepsilon_{k_i} \leq \delta,
    \]
    where we used that  $k-k_i \geq 0$ and hence $2^{k-k_i}\geq 1$.
    On the other hand, if $k \in \mathcal{I}$, we have that $k=k_i$ for some $i \geq i_0$, and so  $\lambda_k \leq \gamma\varepsilon_k \leq \delta$. 
    In total, we infer \( \lambda_k \leq \delta \) for all \( k \geq k_{i_0} \). Since \( \delta > 0 \) was chosen arbitrarily, it follows that \( \lambda_k \to 0 \) as \( k \to \infty \).
\end{proof}
\begin{remark}
When  $F$ is globally convex, $\dim(\mathcal{H}) < \infty$, and the minimiser $x^*$ is unique, then $x_k \to x^*$ holds automatically in the above lemma.
\end{remark}

\subsection{Faster asymptotic rates in the finite-dimensional \texorpdfstring{$C^2$}{C\textasciicircum 2} setting}  \label{sec:c2_setting}

In this subsection, we present our first result on improved asymptotic rates of \cref{alg:proximal_newton} when $\HH$ is finite-dimensional and $f$ possesses additional regularity. Namely, we assume that $f\in C^2$, i.e., $f$ is twice continuously differentiable. The main idea is to prove that if $x_k \to x^*$ as $k \to \infty$, then we also have that $\lambda_k \to 0$ as $k \to \infty$. According to the results of the previous sections, the latter implies faster asymptotic rates, including superlinear convergence.

\begin{theorem}[Improved asymptotic rates, finite dimensions, $C^2$ case]\label{thm:global_convergence_c2} 
Let, for problem \eqref{eq:opt},  \cref{ass:basic,ass:subproblem} hold. 
Let $\dim(\mathcal{H}) < \infty$, $f$ be locally twice continuously differentiable and locally convex around some point $x^*$, and $H(x)\coloneqq  \nabla^2 f(x)$. Let $(x_k)_{k \in \mathbb{N}}$ be the iterates of \cref{alg:proximal_newton}. Then, the following %
hold.
\begin{enumerate}[label=(\roman*)]\itemsep=0em
    \item If $x_k \to x^*$ as $k \to \infty$, then %
    we have that $\lambda_k \to 0$ as $k \to \infty$.
    \item If the assumptions of \cref{thm:global_convergence_sublinear_nonconv} for the nonconvex setting hold and $x_k \to x^*$, then $\min_{0\leq i \leq k-1}\norm{F'(x_{i+1})}{*} = \so(1\slash \sqrt{k})$.
    \item If the assumptions of \cref{thm:global_convergence_nonconv_PL} for the PL setting hold, then $F(x_k)-F^*$ converges to $0$ superlinearly and $x_k$ converges to $x^*$ superlinearly, where $x^*$ is a global solution to \eqref{eq:opt}.
    \item If the assumptions of \cref{thm:global_convergence_sublinear} for the convex setting hold and $x_k \to x^*$, then $F(x_k)-F^*=\so(1\slash k)$.
    \item If the assumptions of \cref{thm:local_convergence_1} on local convergence hold, then $x_k$ converges to $x^*$ locally superlinearly and $F(x_k)$ converges to $F(x^*)$ locally superlinearly, where $x^*$ is a local solution to \eqref{eq:opt}.
    \end{enumerate}
\end{theorem}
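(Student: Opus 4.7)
The plan is to reduce everything to part (i), since parts (ii)--(v) are immediate corollaries of the previously stated theorems, each of which has an explicit ``if $\lambda_k \to 0$'' clause giving the faster asymptotic rate. So the only real work is proving $\lambda_k \to 0$ under the assumption $x_k \to x^*$ with $f$ locally $C^2$ and locally convex around $x^*$, in finite dimensions.

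For part (i), my strategy is to verify the hypotheses of \cref{lem:helper_lambdak_zero}. Local convexity of $f$ around $x^*$ gives $H(x) = \nabla^2 f(x) \succeq 0$ on a neighborhood $U$ of $x^*$, and since $x_k \to x^*$, we have $x_k \in U$ eventually, so the semidefiniteness hypothesis holds. Next, since $H(x_k) \succeq 0$ for large $k$, \cref{lem:strange_convergence} tells us that the trial point $x_+(\lambda_k/2, x_k) \to x^*$ as well, so the line segment $[x_k, x_+(\lambda_k/2,x_k)]$ lies in a shrinking neighborhood of $x^*$ for large $k$. The core estimate then follows from the integral form of Taylor's theorem: writing $y_k \coloneqq x_+(\lambda_k/2, x_k)$,
\begin{equation*}
f'(y_k) - f'(x_k) - \nabla^2 f(x_k)(y_k - x_k) = \int_0^1 \bigl(\nabla^2 f(x_k + t(y_k-x_k)) - \nabla^2 f(x_k)\bigr)(y_k - x_k) \d t,
\end{equation*}
so taking dual norms yields
\begin{equation*}
\norm{f'(y_k) - f'(x_k) - H(x_k)(y_k-x_k)}{*} \leq \varepsilon_k \norm{y_k - x_k}{}, \quad \varepsilon_k \coloneqq \sup_{t \in [0,1]} \norm{\nabla^2 f(x_k + t(y_k-x_k)) - \nabla^2 f(x_k)}{\mathrm{op}}.
\end{equation*}
Because $\dim(\HH) < \infty$, $\nabla^2 f$ is continuous on a compact neighborhood of $x^*$ and hence uniformly continuous there; combined with $x_k, y_k \to x^*$ this gives $\varepsilon_k \to 0$. \cref{lem:helper_lambdak_zero} then delivers $\lambda_k \to 0$.

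The remaining parts are then essentially citations. Part (ii) follows from the final statement of \cref{thm:global_convergence_sublinear_nonconv}, which yields the $\so(1/\sqrt{k})$ rate as soon as $\lambda_k \to 0$. For part (iii), note that under the PL assumption \cref{thm:global_convergence_nonconv_PL} already provides $x_k \to x^*$ (to a global solution) as a conclusion rather than a hypothesis, so its superlinear clauses apply directly. Part (iv) follows from the ``if $\lambda_k \to 0$'' improvement in \cref{thm:global_convergence_sublinear}, while part (v) uses the corresponding clause at the end of the proof of \cref{thm:local_convergence_1}, where $x_k \to x^*$ is again guaranteed by the local hypotheses.

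The only subtlety I anticipate is the interplay between the two convergences $x_k \to x^*$ and $y_k = x_+(\lambda_k/2, x_k) \to x^*$: the latter requires $H(x_k)\succeq 0$, which in turn requires knowing $x_k$ is already in the convexity neighborhood. This is resolved by the order of quantifiers---pick the neighborhood from local convexity first, then use $x_k \to x^*$ to enter it, then apply \cref{lem:strange_convergence} to conclude $y_k$ also enters a (possibly smaller) neighborhood. Uniform continuity of $\nabla^2 f$ on a closed ball around $x^*$ (available in finite dimensions) is what makes $\varepsilon_k \to 0$ go through; this is the only place where $\dim(\HH) < \infty$ is essential, which is consistent with the statement.
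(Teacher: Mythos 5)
Your proof is correct and follows essentially the same route as the paper's: reduce parts (ii)--(v) to (i), then prove $\lambda_k \to 0$ by combining local convexity (giving $H(x_k)\succeq 0$ eventually), \cref{lem:strange_convergence} (giving $x_+(\lambda_k/2,x_k)\to x^*$), the integral-form Taylor remainder, uniform continuity of $\nabla^2 f$ on a compact neighbourhood via Heine--Cantor, and finally \cref{lem:helper_lambdak_zero}. The only cosmetic difference is that you package the remainder bound as a supremum $\varepsilon_k$ over the segment, whereas the paper fixes $\varepsilon$ first and then produces a $\delta$ and a threshold index; both are equivalent expressions of the same uniform-continuity argument.
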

\begin{proof}
\begin{enumerate}[label=(\roman*), wide, labelwidth=!, labelindent=0pt]\itemsep=0em
\item 
Again, given an index $k \geq 0$, we denote by $x_{k,+} \coloneqq x_+(\lambda_k/2,x_k)$ (see \eqref{eq:PLMSN_step}). Since we are in the finite dimensional setting, we write $f'\equiv \nabla f$.
We first show that
\begin{equation}
    \omega_k\coloneqq   \frac{ \norm{\nabla f(x_{k,+}) -  \nabla f(x_k) - \nabla^2 f(x_k)(x_{k,+}-x_k)}{*}}{\|x_{k,+} - x_k\|} \to 0 \quad \text{as $k \to \infty$.} \label{eq:higher_regularity_11}
\end{equation}
For this purpose, fix an arbitrary $\varepsilon>0$. By assumption, there exists a ball $B_R(x^*)$ on which $\nabla^2 f$ exists and is continuous on $B_R(x^*)$. Moreover, by local convexity and twice differentiability of $f$ we obtain $H(x)\coloneqq  \nabla^2 f(x)\succeq 0$ for all $x \in B_R(x^*)$. In addition, using compactness of $B_R(x^*)$ and the Heine--Cantor theorem, we get that $\grad^2 f$ is even uniformly continuous on $B_R(x^*)$. The latter means that there exists $\delta>0$ such that for every $x,y \in B_R(x^*)$ with $\| x -y \| \leq \delta$, we have 
\begin{equation*}
    \|\nabla^2 f(x) - \nabla^2 f(y)\|_{\mathrm{op}} \leq \varepsilon. %
\end{equation*}
As $x_k \to x^*$, there exist $K_1$ such that $k \geq K_1$ implies $x_k \in B_R(x^*)$. %
This in turn implies that, for $k \geq K_1$, $H(x_k) \succeq 0$. %
We conclude by \cref{lem:strange_convergence} that $ x_{k,+} \to x^*$ as well.
We further infer that there is an index $K_2 \geq K_1$ such that $x_k, x_{k,+} \in B_R(x^*)$ and 
$\norm{x_{k,+}-x_k}{} \leq \delta$ for all $k \geq K_2$. 
Consequently, we deduce from the fundamental theorem of calculus that, for all $k \geq K_2$,
\begin{align}
    &\norm{\nabla f(x_{k,+}) -  \nabla f(x_k) -  \nabla^2 f(x_k)(x_{k,+}-x_k)}{*}  \notag\\
    &= \norm{ \int_0^1 (\nabla^2f(x_k + t(x_{k,+} - x_k)) - \nabla^2 f(x_k))(x_{k,+} - x_k) \, \mathrm{d} t }{*} \notag \leq 
    \varepsilon \|x_{k,+} - x_{k}\|
\end{align}
as, clearly, for $x_t \coloneqq x_{k} + t(x_{k,+}-x_{k})$, we have $x_t \in B_R(x^*)$ and $\|x_t - x_{k}\| \leq \|x_{k,+} - x_{k}\| \leq \delta$ for any  $t\in (0,1)$. Since the choice of $\varepsilon>0$ was arbitrary, we infer \eqref{eq:higher_regularity_11}. 
Since $F$  is locally convex around $x^*$, 
recalling that $x_{k,+}=x_+(\lambda_k/2,x_k)$ and applying \cref{lem:helper_lambdak_zero} with $H(x_k) = \grad^2 f(x_k)$ and $\varepsilon_k = \omega_k$, we get $\lambda_{k} \to 0$. 
\item The claim follows by item (i) above and \cref{thm:global_convergence_sublinear_nonconv}.
\item The claim follows by item (i) above and \cref{thm:global_convergence_nonconv_PL}, which guarantees also that $x_k \to x^*$, where $x^*$ is a global solution to \eqref{eq:opt}.
\item The claim follows by item (i) above and \cref{thm:global_convergence_sublinear}.
\item The claim follows by item (i) above and \cref{thm:local_convergence_1}, which guarantees also that $x_k \to x^*$, where $x^*$ is a local solution to \eqref{eq:opt}.\qedhere
\end{enumerate}
\end{proof}

\subsection{Superlinear convergence under semismoothness}
In this subsection, we present one of our main results on superlinear convergence of \cref{alg:proximal_newton} in the semismooth setting, i.e., when \cref{ass:semismooth} holds. This setting is less restrictive than the $C^2$ setting in the previous subsection. Yet, it still allows us to obtain superlinear convergence in the same spirit as for classical SSN. Note that we again consider infinite-dimensional spaces $\HH$.

We start with the following result that establishes superlinear convergence if convergence itself is already established. Recall that \cref{thm:global_convergence_nonconv_PL} for the PL setting guarantees global convergence and \cref{thm:local_convergence_1} guarantees local convergence under PL and QG conditions. Thus, these results in combination with the following proposition establish respectively global superlinear and local superlinear convergence under semismoothness. As already mentioned, for simplicity we focus on the setting when $F$ is locally convex. 

\begin{prop}[Superlinear convergence under semismoothness]\label{thm:fast_local}  %
Let, for problem \eqref{eq:opt}, \cref{ass:basic,ass:subproblem} hold. Let $x^*$ be a local minimum in problem \eqref{eq:opt}, $F$ be locally convex around $x^*$, and the semismoothness at $x^*$ \cref{ass:semismooth} hold. Let $(x_k)_{k \in \mathbb{N}}$ be the iterates of \cref{alg:proximal_newton} and $x_k \to x^*$ as $k \to \infty$. Assume also that, for sufficiently large $k$, $H(x_k) \succeq 0$, 
the PL condition \eqref{eq:PL_at_point} holds at $x_{k}$ with $F^*_U=F(x^*)$, and there exists constants $c_1, c_2>0$  such that
\begin{equation}
    \norm{x_{k+1}-x^*}{} \leq c_1 \norm{x_k -x_{k+1}}{} \quad \text{and} \quad  \norm{x_k-x^*}{} \leq c_2 \norm{x_k -x_{k+1}}{} 
    \label{eq:useful_bounds_abstract}.
\end{equation}
    Consider the following statements.
    \begin{enumerate}[label=(\roman*)]\itemsep=0em
    \item\label{item:DM} The following Dennis--Moré-type (DM-type) condition holds: %
          \begin{equation}
        \norm{(H(x_+(\lambda_k\slash 2, x_k)) - H(x_k))(x_+(\lambda_k\slash 2, x_k) - x^*)}{*} = \so(\norm{ x_+(\lambda_k\slash 2, x_k) - x_{k}}{}) \quad \text{as $k \to \infty$.}\label{eq:DM_condition}
        \end{equation}
    \item\label{item:xk_xkplus1} The following condition holds: %
    \begin{equation}
       \frac{ \norm{f'(x_{+}(\lambda_k\slash 2,x_k)) - f'(x_{k}) - H(x_k)(x_{+}(\lambda_k\slash 2,x_k) - x_k) }{*} }{ \norm{x_{+}(\lambda_k\slash 2, x_k) - x_k}{}} \to 0 \quad \text{as $k \to \infty$}.\label{eq:uniform_ss_condition_convex}
    \end{equation}
    \item\label{item:lambdak_zero} The sequence of regularisation parameters goes to zero, i.e., $\lambda_k \to 0$ as $k \to \infty$.
    \item\label{item:local_superlinear} The convergence   $x_{k} \to x^*$ is superlinear, and in fact
     \begin{equation*}
         \|x_{k+1} - x^*\| \leq \frac{2c_2\lambda_{k}^{1/2}}{ \sqrt{\alpha^2 \beta \mu}}   \|x_k -x^*\| = \so(\|x_k -x^*\|)\quad \text{as $k \to \infty$}.
    \end{equation*}
    \item\label{item:gradients_superlinear} The subgradients converge superlinearly to zero, and in fact
   \begin{equation*}%
            \norm{ F'(x_{k+1})}{*} \leq \frac{\lambda_k^{1\slash 2}}{\sqrt{2\alpha\mu}} \norm{F'(x_k)}{*} = \so(\norm{F'(x_k)}{*}) \quad \text{as $k \to \infty$}.
        \end{equation*}   
    \end{enumerate}    
    Then, %
    we have the following relationship:
        \begin{align*}
        \ref{item:DM} &\iff \ref{item:xk_xkplus1} \implies \ref{item:lambdak_zero} \implies 
        \begin{cases}
            \ref{item:local_superlinear},\\
            \ref{item:gradients_superlinear}.
        \end{cases} %
    \end{align*} 

    \end{prop}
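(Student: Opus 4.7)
The plan is to establish the equivalence (i) $\iff$ (ii) via semismoothness of $f'$ at $x^*$, then cascade (ii) $\Rightarrow$ (iii) via \cref{lem:helper_lambdak_zero}, and finally deduce (iv) and (v) from $\lambda_k \to 0$ by combining the progress estimates already at hand.

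For (i) $\iff$ (ii), set $x_{+} \coloneqq x_{+}(\lambda_k/2, x_k)$. Semismoothness of $f'$ at $x^*$ gives, as $y \to x^*$,
\[
\norm{f'(y) - f'(x^*) - H(y)(y - x^*)}{*} = \so(\norm{y - x^*}{}).
\]
Applying this at $y = x_+$ and at $y = x_k$ and subtracting produces
\[
f'(x_+) - f'(x_k) - H(x_k)(x_+ - x_k) = \bigl[H(x_+) - H(x_k)\bigr](x_+ - x^*) + \so(\norm{x_+ - x^*}{}) + \so(\norm{x_k - x^*}{}).
\]
Since $x_k \to x^*$ and $H(x_k) \succeq 0$ eventually, \cref{lem:strange_convergence} yields $x_+ \to x^*$ as well, and combining \eqref{eq:useful_bounds_abstract} with \cref{lem:good_bounds} (applied at $\lambda = \lambda_k/2$) shows that $\norm{x_+ - x^*}{}$ and $\norm{x_k - x^*}{}$ are both $\mathcal{O}(\norm{x_+ - x_k}{})$. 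Dividing the displayed identity by $\norm{x_+ - x_k}{}$ then establishes that the left-hand side of (ii) is $\so(1)$ if and only if the middle term on the right is $\so(\norm{x_+ - x_k}{})$, which is precisely the content of (i).

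For (ii) $\Rightarrow$ (iii), the quotient in (ii) is exactly the sequence $\varepsilon_k$ required by \cref{lem:helper_lambdak_zero}; the remaining hypotheses (local convexity of $F$ around $x^*$, $H(x_k) \succeq 0$, and $x_k \to x^*$) are in force, so the lemma delivers $\lambda_k \to 0$. For (iii) $\Rightarrow$ (v), apply \cref{lem:rk_identity_new}(ii) in its shifted form $g_{k+1}^2 \leq \lambda_k g_k^2/(2\alpha^2\beta\mu)$, valid for $k$ large enough that the localised PL at $x_k$ and the inclusion $x_{k+1} \in U$ are available; taking square roots and using $\lambda_k \to 0$ directly produces the superlinear decrease of $\norm{F'(x_k)}{*}$. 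For (iii) $\Rightarrow$ (iv), the plan is to bound $\norm{x_{k+1} - x^*}{} \leq c_1 r_k$ via \eqref{eq:useful_bounds_abstract}, apply \cref{lem:rk_identity_new}(ii) to get $r_k \leq \sqrt{\lambda_{k-1}/(\alpha^2\beta\mu)}\,r_{k-1} \leq \sqrt{2\lambda_k/(\alpha^2\beta\mu)}\,r_{k-1}$ (invoking $\lambda_{k-1}\leq 2\lambda_k$ from \cref{step:k+1_update}), and finally rewrite $r_{k-1}$ in terms of $\norm{x_k - x^*}{}$ via a triangle inequality together with \eqref{eq:useful_bounds_abstract} applied at index $k-1$. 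The assembled estimate is $\norm{x_{k+1} - x^*}{} \leq (\text{const})\sqrt{\lambda_k}\,\norm{x_k - x^*}{}$, and $\lambda_k \to 0$ yields superlinearity.

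The main obstacle is the constant bookkeeping in the (iii) $\Rightarrow$ (iv) step, because \eqref{eq:useful_bounds_abstract} bounds distances to $x^*$ \emph{by} consecutive step-lengths $r_k$, not the reverse; extracting the converse bound needed to replace $r_{k-1}$ by $\norm{x_k - x^*}{}$ requires combining \eqref{eq:useful_bounds_abstract} with a triangle inequality, which is the source of the $c_2$ factor in the final estimate. The equivalence (i) $\iff$ (ii) also deserves care, since the abstract $\so$-estimates of semismoothness are stated relative to $\norm{y - x^*}{}$, and \cref{lem:good_bounds} is what permits their reinterpretation relative to $\norm{x_+ - x_k}{}$. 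Beyond these two points, every step is routine algebra on top of the machinery already built.
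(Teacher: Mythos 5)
Your plan — establish (i) $\iff$ (ii) via a triangle-inequality decomposition of $f'(x_{k,+})-f'(x_k)-H(x_k)(x_{k,+}-x_k)$ controlled by semismoothness and \cref{lem:good_bounds}, cascade to (iii) via \cref{lem:helper_lambdak_zero}, and deduce (iv), (v) from $\lambda_k \to 0$ — matches the paper's strategy, and your arguments for (i) $\iff$ (ii) and (ii) $\Rightarrow$ (iii) are essentially the paper's.

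The (iii) $\Rightarrow$ (iv) step, which you flag as mere bookkeeping, has a genuine gap. You go backward in the index, bounding $\norm{x_{k+1}-x^*}{} \leq c_1 r_k \lesssim \sqrt{\lambda_k}\, r_{k-1}$, and you then need $r_{k-1} \lesssim \norm{x_k-x^*}{}$. That converse bound does not exist: the triangle inequality only gives $r_{k-1} \leq \norm{x_{k-1}-x^*}{} + \norm{x_k-x^*}{}$, and the extra term $\norm{x_{k-1}-x^*}{}$ cannot be absorbed — it is generically much larger than $\norm{x_k-x^*}{}$, and the bound $\norm{x_{k-1}-x^*}{} \leq c_2 r_{k-1}$ from \eqref{eq:useful_bounds_abstract} is useless for closing the loop since $c_2 \geq 1$ (indeed $c_2 = 1+c_1$ in \cref{lem:useful_bounds_new}). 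The paper instead goes \emph{forward}: apply the second inequality of \eqref{eq:useful_bounds_abstract} at index $k+1$ to get $\norm{x_{k+1}-x^*}{} \leq c_2\, r_{k+1}$, then the contraction $r_{k+1} \leq \sqrt{\lambda_k/(\alpha^2\beta\mu)}\, r_k$ from \eqref{eq:rk_identity_new}, and only then expand $r_k \leq \norm{x_{k+1}-x^*}{} + \norm{x_k-x^*}{}$. This places $\norm{x_{k+1}-x^*}{}$ on both sides with a vanishing coefficient on the right, so once $c_2\sqrt{\lambda_k/(\alpha^2\beta\mu)} \leq 1/2$ you rearrange to obtain $\norm{x_{k+1}-x^*}{} \leq \frac{2c_2\sqrt{\lambda_k}}{\sqrt{\alpha^2\beta\mu}}\norm{x_k-x^*}{}$. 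Your backward chain never produces a term to rearrange against, so it does not close.

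A smaller discrepancy: for (iii) $\Rightarrow$ (v) you invoke the shifted \eqref{eq:gk_identity_new} to get $g_{k+1}^2 \leq \lambda_k g_k^2/(2\alpha^2\beta\mu)$. That does give superlinearity, but with a weaker constant than the one claimed in (v). The paper exploits local convexity through \cref{lem:convex_g_condition}, $F(x_k)-F(x_{k+1}) \geq \tfrac{\alpha}{\lambda_k} g_{k+1}^2$, together with the PL bound $F(x_k)-F(x^*) \leq g_k^2/(2\mu)$, to obtain the sharper estimate $g_{k+1}^2 \leq \tfrac{\lambda_k}{2\alpha\mu} g_k^2$, which is exactly the constant stated. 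Your route proves the qualitative superlinear rate but not the quantitative form of item (v).
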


    \begin{proof}
    By the assumptions of the theorem, there exist a ball $B_R(x^*)$ such that 
    \begin{enumerate}[label=(\arabic*)] 
        \item $F(x)\geq F(x^*)$ for all $x \in B_R(x^*)$; 
        \item $F$ is convex on $B_R(x^*)$;
        \item By \cref{ass:semismooth} of semismoothness at $x^*$ it holds that
        \begin{equation}\label{eq:thm:fast_local_proof_1}
            \norm{ f'(x) - f'(x^{*}) - H(x)(x - x^*)}{*}= \so(\norm{ x -x^*}{}), \quad %
            \text{as } x \to x^*.
        \end{equation}
    \end{enumerate}
    For clarity, given an index $k \geq 0$, we again denote $x_{k,+} \coloneqq x_+(\lambda_k/2,x_k)$ (see \eqref{eq:PLMSN_step}). 
    By the assumptions, we also have the following.
    \begin{enumerate}[label=(\arabic*)] \setcounter{enumi}{3}
        \item There exists $K_0$ such that for all $k\geq K_0$, we have $H(x_k) \succeq 0$. %
        \item Combining the previous item with $x_k \to x^*$, we have by \cref{lem:strange_convergence} that $ x_{k,+} \to x^*$ as well. Thus, there exists $K_1$ s.t. for all $k \geq K_1$, we have $x_k,x_{k,+}  \in B_R(x^*)$ and, hence, we can use local optimality, convexity, and semismoothness.
        \item There exists $K_2$ such that for all $k\geq K_2$, the PL condition holds at $x_{k}$, i.e., \eqref{eq:PL_at_point} holds with $F^*_U=F(x^*)$. By the previous item, $x_k \in B_R(x^*)$ for all $k \geq K_1$. Also, we have $F(x)\geq F(x^*)$ for all $x \in B_R(x^*)$. Taking $U=B_R(x^*)$, $F_U^*=F(x^*)$ in \cref{lem:rk_identity_new} and combining with these observations, we can apply \cref{lem:rk_identity_new} for any $k\geq \max\{K_1,K_2\}$.
        \item There exists $K_3$ such that for all $k\geq K_3$ the inequalities \eqref{eq:useful_bounds_abstract} hold.
    \end{enumerate}

    From now on, we consider arbitrary $k \geq \max\{K_0,K_1,K_2,K_3\}$ so that we can apply all the above seven items.

    \begin{enumerate}[label=(\roman*), wide, labelwidth=!, labelindent=0pt]%
    \item[\ref{item:DM} $\implies$ \ref{item:xk_xkplus1}:]
    Thanks to the first inequality in \eqref{eq:useful_bounds_abstract} and $H(x_k) \succeq 0$, %
    we can use \cref{lem:good_bounds} with $\lambda=\lambda_k/2$ to obtain, recalling that $x_{k,+} =x_+(\lambda_k/2,x_k)$,
    \begin{align}
        \| x_{k,+} - x^*\|  \leq a\| x_{k,+}  - x_k\|,  \quad 
        \norm{x_k-x^*}{}  \leq b\| x_{k,+}  - x_k\|,\label{eq:good_bound_abstract_2}
    \end{align}
    where $a,b>0$ are constants.
    Using these inequalities, by the triangle inequality, semismoothness at $x^*$ \eqref{eq:thm:fast_local_proof_1}, and the Dennis--Moré-type condition \eqref{eq:DM_condition}, we have
    \begin{align*}
    \norm{ f'(x_{k,+}) - f'(x_{k}) - H(x_k)(x_{k,+} - x_k) }{*} 
        &\leq
        \norm{ f'(x_{k,+}) - f'(x^*) - H(x_{k,+})(x_{k,+} - x^*) }{*} \\
        &\quad +  \norm{ f'(x_k) - f'(x^{*}) - H(x_k)(x_{k} - x^*) }{*}\\
        &\quad +  \norm{ (H(x_{k,+}) - H(x_{k}))(x_{k,+} - x^*) }{*} \\
         \overset{\eqref{eq:thm:fast_local_proof_1},\eqref{eq:DM_condition}}&{=} \so(\| x_{k,+} -x^*\|) +\so(\| x_{k} -x^*\|) + \so(\| x_{k,+} -x_k\|) \\
        \overset{\eqref{eq:good_bound_abstract_2}}&{=} \so(\| x_{k,+} -x_k\|), 
    \end{align*}
    which is \eqref{eq:uniform_ss_condition_convex}.%

    \item[\ref{item:xk_xkplus1} $\implies$ \ref{item:DM}:] Analogous to the decomposition above, we have
     \begin{align*}
      \norm{ (H(x_{k,+}) - H(x_{k}))(x_{k,+} - x^*) }{*} 
        &\leq
        \norm{ f'(x_{k,+}) - f'(x^*) - H(x_{k,+})(x_{k,+} - x^*) }{*} \\
        &\quad +  \norm{ f'(x_k) - f'(x^{*}) - H(x_k)(x_{k} - x^*) }{*}\\
        &\quad + \norm{ f'(x_{k,+}) - f'(x_{k}) - H(x_k)(x_{k,+} - x_k) }{*} \\
         \overset{\eqref{eq:thm:fast_local_proof_1},\eqref{eq:uniform_ss_condition_convex}}&{=} \so(\| x_{k,+} -x^*\|) +\so(\| x_{k} -x^*\|) + \so(\| x_{k,+} -x_k\|) \\
        \overset{\eqref{eq:good_bound_abstract_2}}&{=} \so(\| x_{k,+} -x_k\|), 
    \end{align*}
    which is \eqref{eq:DM_condition}.
  
    \item[\ref{item:xk_xkplus1} $\implies$ \ref{item:lambdak_zero}:] 
    By the assumption of this item, there exists a sequence $(\varepsilon_k)_{k \in \mathbb{N}} \subset \R_+$ such that $\varepsilon_k \to 0$ and  
        \begin{align*}
            \norm{ f'(x_{+}(\lambda_k/2, x_k)) - f'(x_{k}) - H(x_k)(x_{+}(\lambda_k/2, x_k) - x_k) }{*} 
            &\leq \varepsilon_k \norm{x_k - x_+(\lambda_k/2,x_k)}{}.
        \end{align*}
        Thanks to local convexity of $F$ and $H(x_k) \succeq 0$, %
        we can use \cref{lem:helper_lambdak_zero}, which gives us that $\lambda_k \to 0$.

\item[\ref{item:lambdak_zero} $\implies$ \ref{item:local_superlinear}:]  
Due to the assumed PL condition at $x_k$ and the fact that $F(x^*) \leq F(x_{k})$, we can apply  \cref{lem:rk_identity_new} and obtain \eqref{eq:rk_identity_new}, where we recall from \eqref{eq:F_r_g_notaion} the notation $r_k\coloneqq \norm{x_k-x_{k+1}}{}$.
This together with the second inequality in \eqref{eq:useful_bounds_abstract} at iteration $k+1$ gives for some constants $c_1,c_2 >0$
\begin{equation*}
    \|x_{k+1} - x^*\| \overset{\eqref{eq:useful_bounds_abstract}}{\leq} c_2 \norm{x_{k+1}-x_{k+2}}{}, \qquad  \norm{x_{k+1}-x_{k+2}}{}\overset{\eqref{eq:rk_identity_new}}{\leq} \frac{1}{ \sqrt{\alpha^2 \beta \mu}} \lambda_{k}^{1/2} \norm{x_k-x_{k+1}}{}.
\end{equation*}
Combining these two inequalities and using the triangle inequality, we deduce
\begin{equation*}
    \|x_{k+1} - x^*\| \leq \frac{c_2}{ \sqrt{\alpha^2 \beta \mu}}  \lambda_{k}^{1/2} \norm{x_k-x_{k+1}}{} \leq \frac{c_2}{ \sqrt{\alpha^2 \beta \mu}}  \lambda_{k}^{1/2} (\|x_{k+1} - x^*\| + \|x_k -x^*\|).
\end{equation*}
Choosing $K_4 \geq \max\{K_0,K_1,K_2,K_3\}$ sufficiently large such that $k\geq K_4$ implies  $\frac{c_2}{ \sqrt{\alpha^2 \beta \mu}} \lambda_{k}^{1/2} \leq \frac12$, we have
\begin{equation*}
    \|x_{k+1} - x^*\| \leq \frac{2c_2}{ \sqrt{\alpha^2 \beta \mu}}  \lambda_{k}^{1/2}  \|x_k -x^*\|, \quad \forall k \geq K_4,
\end{equation*}
which implies the superlinear convergence of the iterates in \ref{item:local_superlinear} as $\lambda_k \to 0$ by assumption in this item. 
    
\item[\ref{item:lambdak_zero} $\implies$ \ref{item:gradients_superlinear}:] 
By the local convexity, \cref{lem:convex_g_condition} applies and we can use \eqref{eq:functionValuesGradientRelation_convex}. 
Due to the PL condition and the fact that $F(x^*) \leq F(x_{k})$, we can also apply  \cref{lem:rk_identity_new} and use \eqref{eq:F_k_g_k_bound}. Using additionally that $F(x^*) \leq F(x_{k+1})$, we finally obtain, recalling the notation $g_k\coloneqq \norm{F'(x_k)}{*}$ from \eqref{eq:F_r_g_notaion},
\begin{align*}
    g_{k+1}^2 \overset{\eqref{eq:functionValuesGradientRelation_convex}}&{\leq} \frac{\lambda_k}{\alpha}(F(x_k)-F(x_{k+1})) = \frac{\lambda_k}{\alpha}(F(x_k)-F(x^*) + F(x^*) - F(x_{k+1}))\overset{\eqref{eq:F_k_g_k_bound}}{\leq} \frac{\lambda_k}{2\alpha\mu}g_k^2.
\end{align*}
Since $\lambda_k \to 0$, we clearly have superlinear convergence of the subgradient norms. Note that due to the local convexity, the above bound is tighter than the one that can be obtained from \eqref{eq:gk_identity_new}. \qedhere
\end{enumerate}
\end{proof}
\cref{thm:fast_local} shows that the semismoothness \cref{ass:semismooth} and DM condition \eqref{eq:DM_condition} are sufficient for superlinear convergence of \cref{alg:proximal_newton}. We consider \eqref{eq:DM_condition} as a kind of Dennis--Moré condition, referring to the celebrated Dennis--Moré theorem \cite{MR343581}, which essentially gives a necessary and sufficient condition for superlinear convergence of quasi-Newton methods for smooth functions (in the nonsmooth setting, we refer to, e.g., \cite{cibulka2015inexact, MR3023752}). An interesting open question is to understand if \eqref{eq:DM_condition} is  also necessary to obtain superlinear convergence in our setting. 
We also note that a similar condition was used recently in \cite{potzl2022second,potzl2024inexact} to show superlinear convergence in the semismooth setting. %

The next result provides a sufficient condition for \eqref{eq:DM_condition} by showing that it is satisfied when $H$ is continuous.
\begin{lem}\label{lem:c2_implies_dm}
Let $H$ be continuous at $x^*$.
Let $(x_k)_{k \in \mathbb{N}}$ be the iterates of \cref{alg:proximal_newton} and $x_k \to x^*$ as $k \to \infty$. Let, for sufficiently large $k$, $H(x_k) \succeq 0$ and inequality \eqref{eq:good_bound_1} hold with $\lambda=\lambda_k/2$.
Then the Dennis--Moré-type condition \eqref{eq:DM_condition} is satisfied.
\end{lem}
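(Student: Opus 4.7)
The plan is to bound the left-hand side of \eqref{eq:DM_condition} via operator-norm submultiplicativity and then extract two vanishing factors: one from continuity of $H$ and one from the sensitivity estimate \eqref{eq:good_bound_1}. Write $x_{k,+} \coloneqq x_+(\lambda_k/2, x_k)$ for brevity. The starting point is the trivial estimate
\[
\norm{(H(x_{k,+}) - H(x_k))(x_{k,+} - x^*)}{*} \leq \norm{H(x_{k,+}) - H(x_k)}{\mathrm{op}} \cdot \norm{x_{k,+} - x^*}{}.
\]

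Next I would apply hypothesis \eqref{eq:good_bound_1} with $\lambda = \lambda_k/2$, which upon simplification of the coefficient yields $\norm{x_{k,+} - x^*}{} \leq \tfrac{3c+1}{2}\,\norm{x_{k,+} - x_k}{}$; this is a \emph{linear} bound in $\norm{x_{k,+}-x_k}{}$ with a constant independent of $k$. To finish, it remains to verify that $\norm{H(x_{k,+}) - H(x_k)}{\mathrm{op}} \to 0$. Since $x_k \to x^*$ and $H(x_k) \succeq 0$ for sufficiently large $k$, \cref{lem:strange_convergence} applies and gives $x_{k,+} \to x^*$. Continuity of $H$ at $x^*$ (in the operator norm on $\mathcal{L}(\HH,\HH^*)$) combined with the triangle inequality then yields
\[
\norm{H(x_{k,+}) - H(x_k)}{\mathrm{op}} \leq \norm{H(x_{k,+}) - H(x^*)}{\mathrm{op}} + \norm{H(x^*) - H(x_k)}{\mathrm{op}} \to 0.
\]
Multiplying the vanishing operator-norm factor by the linear bound on $\norm{x_{k,+}-x^*}{}$ yields the desired $\so(\norm{x_{k,+}-x_k}{})$ behavior, which is precisely \eqref{eq:DM_condition}.

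There is no real obstacle here; the lemma is essentially a bookkeeping result that assembles \cref{lem:strange_convergence}, the geometric bound \eqref{eq:good_bound_1}, and the definition of continuity. The only point worth flagging is the interpretation of ``continuity of $H$ at $x^*$'': it is understood as continuity of the map $x \mapsto H(x) \in \mathcal{L}(\HH,\HH^*)$ with respect to the operator-norm topology, which is the natural reading in this setting and is exactly what is needed to drive the operator-norm difference to zero along both sequences $(x_k)$ and $(x_{k,+})$ simultaneously.
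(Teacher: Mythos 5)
Your proposal is correct and follows essentially the same route as the paper's proof: invoke \cref{lem:strange_convergence} to get $x_{k,+}\to x^*$, use \eqref{eq:good_bound_1} at $\lambda=\lambda_k/2$ to bound $\norm{x_{k,+}-x^*}{}$ by $\tfrac{1+3c}{2}\norm{x_{k,+}-x_k}{}$, and drive $\norm{H(x_{k,+})-H(x_k)}{\mathrm{op}}$ to zero via the triangle inequality and operator-norm continuity of $H$ at $x^*$. The only cosmetic difference is that the paper inserts $\pm H(x^*)$ before applying the operator-norm bound, whereas you factor out $\norm{H(x_{k,+})-H(x_k)}{\mathrm{op}}$ first; the content is identical.
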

\begin{proof}
Again, given an index $k \geq 0$, we denote by $x_{k,+} \coloneqq x_+(\lambda_k/2,x_k)$ (see \eqref{eq:PLMSN_step}). Then, by \cref{lem:strange_convergence}, we have $x_{k,+} \to x^*$ as $k\to \infty$.
This, together with triangle inequality and \eqref{eq:good_bound_1} gives us
\begin{align*}
\frac{\norm{(H(x_{k,+}) - H(x_k))(x_{k,+} - x^*)}{*}}{\| x_{k,+} - x_k\|} &= \frac{\norm{(H(x_{k,+}) -H(x^*) + H (x^*) - H(x_k))(x_{k,+} - x^*)}{*}}{\| x_{k,+}- x_k\|} \\
&\leq \frac{(\norm{H(x_{k,+}) -H(x^*)}{\mathrm{op}} + \norm{H(x^*) - H(x_k)}{\mathrm{op}})\norm{x_{k,+} - x^*}{}}{\| x_{k,+} - x_k\|} \\
\overset{\eqref{eq:good_bound_1}}&{\leq}\frac{1+3c}{2}(\norm{H(x_{k,+}) -H(x^*)}{\mathrm{op}} + \norm{H(x^*) - H(x_k)}{\mathrm{op}})\to 0,
\end{align*}
where convergence holds by continuity of $H$ at $x^*$ since both $x_k$ and $x_{k,+}$ converge to $x^*$ as $k\to \infty$.  
\end{proof}
Note that the inequality \eqref{eq:good_bound_1} is guaranteed, for example by \cref{lem:useful_bounds_new} or \cref{lem:useful_bounds_local} since they guarantee \eqref{eq:useful_bounds_abstract_first_1}.
\begin{remark}
It is possible to use the following condition instead of \eqref{eq:DM_condition}:
\begin{equation*}
\norm{(H(x_+(\lambda_k\slash 2,x_k)) - H(x_k))(x_+(\lambda_k\slash 2,x_k) - x^*}{*} = \so(\| x_{k} - x^*\|) \quad \text{as $x_k \to x^*$.}%
\end{equation*}
Yet, this requires additional technical steps using \eqref{eq:good_bound_abstract_2} to show the implication \ref{item:DM} $\implies$ \ref{item:xk_xkplus1}.
Also, \cref{lem:c2_implies_dm} would still go through with additional estimates, as done in the proof of \cite[Proposition 8]{potzl2022second}. %

As we also showed in \cref{thm:fast_local}, the condition \eqref{eq:uniform_ss_condition_convex} is sufficient for superlinear convergence as well. This condition can be interpreted as a kind of uniform semismoothness along the trajectory of \cref{alg:proximal_newton}.
\end{remark}

\begin{remark}
It is possible  to not assume local convexity of $F$ in \cref{thm:fast_local}. The main place where it is used is in the proof of \cref{lem:helper_lambdak_zero} to show the inequality \eqref{eq:final_inequality_superlinear_2}, which could also be shown by using the same arguments as in \cref{lem:acceptance_inner_nonconv} based on the model nonincrease condition \eqref{eq:prox_step_alg_decr}. At the same time, the assumption $H(x_k) \succeq 0$ is  %
is crucial to show inequalities \eqref{eq:good_bound_abstract_2} by \cref{lem:good_bounds} and is thus crucial to show superlinear convergence. %
\end{remark}
We finish this subsection by the main results that combine the convergence rate results of previous sections with the result of \cref{thm:fast_local}.
The first result is that in the PL setting local convexity, semismoothness, and DM condition imply global superlinear convergence of \cref{alg:proximal_newton}.
\begin{theorem}[Global superlinear convergence]
\label{thm:global_superlinear}
Let the assumptions of \cref{thm:global_convergence_nonconv_PL} hold. %
Let also $(x_k)_{k \in \mathbb{N}}$ be the iterates of \cref{alg:proximal_newton} with arbitrary starting point $x_0\in \dom F$. 
Let additionally $F$ be locally convex around $x^*$, and  the semismoothness at $x^*$ \cref{ass:semismooth} hold, where $x^*$ is the limit point of $(x_k)_{k \in \mathbb{N}}$. %
Let also either 
the DM condition \eqref{eq:DM_condition} or condition \eqref{eq:uniform_ss_condition_convex} hold. 
Let $H(x_k) \succeq 0$ for sufficiently large $k$.%

Then, $x^*$ is a global minimum and
\[x_k \to x^*\text{ superlinearly}, \qquad F(x_k)-F^* \to 0\text{ superlinearly}, \qquad\norm{F'(x_k)}{*} \to 0\text{ superlinearly as $k \to \infty$}.\]

\end{theorem}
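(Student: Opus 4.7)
The plan is to assemble this result by combining the global linear convergence established in \cref{thm:global_convergence_nonconv_PL} with the superlinear upgrade provided by \cref{thm:fast_local}. Since the hypotheses of \cref{thm:global_convergence_nonconv_PL} are in force, we immediately obtain that the sequence $(x_k)_{k \in \mathbb{N}}$ is well defined, converges strongly to some limit $x^* \in \HH$, and that $x^*$ is a global minimum of $F$. This identifies the limit point and confirms that the global minimal value $F^*$ coincides with $F(x^*)$.

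The next step is to check that all prerequisites of \cref{thm:fast_local} are met for this $x^*$. Local minimality of $x^*$ and semismoothness at $x^*$ are part of our hypotheses, as is local convexity of $F$ around $x^*$ and the positive semidefiniteness $H(x_k) \succeq 0$ for sufficiently large $k$. The convergence $x_k \to x^*$ has been established in the previous step. The pointwise PL inequality \eqref{eq:PL_at_point} at $x_k$ with $F^*_U = F(x^*)$ is just the global PL \cref{ass:PL} evaluated at $x_k$, using that $F(x^*) = F^*$. The two-sided distance bounds in \eqref{eq:useful_bounds_abstract} are exactly the content of \cref{lem:useful_bounds_new}, which is applicable precisely because we are under the hypotheses of \cref{thm:global_convergence_nonconv_PL}. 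Finally, either \eqref{eq:DM_condition} or \eqref{eq:uniform_ss_condition_convex} is assumed.

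Having verified the prerequisites, \cref{thm:fast_local} applies. From its implications \ref{item:DM}$\iff$\ref{item:xk_xkplus1}$\implies$\ref{item:lambdak_zero}, we conclude $\lambda_k \to 0$ as $k \to \infty$. The same theorem then delivers, via \ref{item:local_superlinear} and \ref{item:gradients_superlinear}, the superlinear convergence of $x_k$ to $x^*$ and of $\norm{F'(x_k)}{*}$ to $0$. To upgrade the linear rate of $F(x_k)-F^*$ to superlinear, we invoke the addendum in item (i) of \cref{thm:global_convergence_nonconv_PL}, which explicitly states that once $\lambda_k \to 0$, the inequality \eqref{eq:thm:global_convergence_nonconv_PL_proof_-1.5} yields superlinear decay of $F(x_k)-F^*$ since $1 - \frac{2\beta\alpha^2\mu}{2\beta\alpha^2\mu + \lambda_k} \to 0$.

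There is no serious obstacle: the entire proof is a bookkeeping exercise confirming that the two engines, \cref{thm:global_convergence_nonconv_PL} and \cref{thm:fast_local}, are compatible and together yield global existence of the limit and local-to-global superlinear rates. The only subtle point worth double-checking is the equality $F^* = F(x^*)$, which is needed to align the PL condition used in \cref{thm:fast_local} (which involves $F(x^*)$ as the reference value) with the global PL condition (which uses the global infimum); this equality is guaranteed by item (ii) of \cref{thm:global_convergence_nonconv_PL}.
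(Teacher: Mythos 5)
Your proof is correct and follows essentially the same route as the paper's: invoke \cref{thm:global_convergence_nonconv_PL} to get global strong convergence to a global minimiser $x^*$, verify the hypotheses of \cref{thm:fast_local} (in particular the distance bounds \eqref{eq:useful_bounds_abstract} via \cref{lem:useful_bounds_new}), conclude $\lambda_k\to 0$, and then read off the three superlinear rates from \cref{thm:fast_local} together with the $\lambda_k\to 0$ addendum of \cref{thm:global_convergence_nonconv_PL}. Your explicit remark reconciling $F^*=F(x^*)$ with the reference value $F_U^*$ used in the pointwise PL inequality is a small but welcome clarification that the paper leaves implicit.
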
    
\begin{proof}
Since the assumptions of \cref{thm:global_convergence_nonconv_PL} hold, we obtain $x_k \to x^*$ as $k \to \infty$ where $x^*$ is a global minimum, and, hence, is also a local minimum. From \cref{thm:global_convergence_nonconv_PL} we also inherit \cref{ass:subproblem} and the PL inequality of \cref{ass:PL} at any $x_{k}$. Finally, inequalities \eqref{eq:useful_bounds_abstract} hold by \cref{lem:useful_bounds_new}. Thus, all the assumptions of \cref{thm:fast_local} hold and we have that $\lambda_k\to 0$, which by \cref{thm:global_convergence_nonconv_PL} and item \ref{item:gradients_superlinear} of  \cref{thm:fast_local} prove the claimed superlinear convergence.
\end{proof}   
For convenience, let us state our result in the $C^2$ convex case. 
\begin{cor}[Global superlinear convergence, $C^2$ case, convex setting, PL condition]\label{cor:c2_convex_case}
Let, for problem \eqref{eq:opt},  \cref{ass:basic,ass:main_inequality,ass:PL} hold, let $f$ be $C^2$ and convex, and set $H=f''$. Let also $(x_k)_{k \in \mathbb{N}}$ be the iterates of \cref{alg:proximal_newton} with arbitrary starting point $x_0\in \dom F$. %

Then, $x^*$ is a global minimum and
\[x_k \to x^*\text{ superlinearly}, \qquad F(x_k)-F^* \to 0\text{ superlinearly}, \qquad\norm{F'(x_k)}{*} \to 0\text{ superlinearly as $k \to \infty$}.\]
\end{cor}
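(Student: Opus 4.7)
The plan is to apply \cref{thm:global_superlinear} by verifying each of its hypotheses in the present $C^2$ convex setting. Although \cref{ass:subproblem} is in force throughout the paper, it is worth noting that it genuinely holds here: since $f$ is convex and $H(x) = f''(x) \succeq 0$ for every $x$, the first bullet of \cref{rem:model-descent} shows that the model $\widetilde{F}(\cdot;x,\lambda)$ is strongly convex for every $\lambda > 0$ and admits a unique global minimiser satisfying \eqref{eq:prox_step_alg_FO}--\eqref{eq:prox_step_alg_decr}. Together with the standing \cref{ass:basic,ass:main_inequality,ass:PL}, the hypotheses of \cref{thm:global_convergence_nonconv_PL} are thereby met, so $(x_k)_{k \in \mathbb{N}}$ is well defined and converges strongly to a global minimum $x^*$.

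Next, I would dispatch the remaining hypotheses of \cref{thm:global_superlinear}. Local convexity of $F$ around $x^*$ is immediate from the global convexity of $F = f + \psi$, and $H(x_k) \succeq 0$ for all $k$ follows from convexity of $f$. The semismoothness of $f'$ at $x^*$ with respect to $H = f''$ (\cref{ass:semismooth}) is the only substantive ingredient: from the fundamental theorem of calculus,
\begin{equation*}
f'(x^*+h) - f'(x^*) - f''(x^*+h)h \;=\; \int_0^1 \bigl(f''(x^*+th) - f''(x^*+h)\bigr)h\,\dt,
\end{equation*}
so the left-hand side is bounded in norm by $\sup_{t \in [0,1]} \|f''(x^*+th) - f''(x^*+h)\|_{\mathrm{op}} \|h\|$, and continuity of $f''$ at $x^*$ forces the supremum to vanish as $h \to 0$, which is exactly \eqref{eq:semismoothness_def}.

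Finally, to supply the Dennis--Mor\'e-type condition \eqref{eq:DM_condition}, I would invoke \cref{lem:c2_implies_dm}. The map $H = f''$ is continuous at $x^*$ since $f \in C^2$; \cref{lem:useful_bounds_new}, whose hypotheses are precisely those of \cref{thm:global_convergence_nonconv_PL} already verified above, yields \eqref{eq:useful_bounds_abstract_first_1}, and \cref{lem:good_bounds} applied with $\lambda = \lambda_k/2$ gives \eqref{eq:good_bound_1}. All assumptions of \cref{lem:c2_implies_dm} are in place, so \eqref{eq:DM_condition} holds, and \cref{thm:global_superlinear} then delivers the three announced superlinear rates. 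The argument is essentially a verification; the only nontrivial step is the short Taylor-type computation turning $C^2$-regularity into semismoothness at $x^*$, so I do not anticipate any genuine obstacle beyond careful bookkeeping.
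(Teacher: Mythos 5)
Your proposal is correct. The paper does not print a proof of \cref{cor:c2_convex_case}, but its placement immediately after \cref{thm:global_superlinear} makes clear that the intended argument is exactly yours: verify the hypotheses of that theorem. Your verification is complete and careful — \cref{ass:subproblem} via the first bullet of \cref{rem:model-descent}, local convexity and $H(x_k)\succeq 0$ directly from convexity of $f$, semismoothness of $f'$ at $x^*$ with respect to $H=f''$ via the Taylor identity and continuity of $f''$ at $x^*$, and the DM condition \eqref{eq:DM_condition} via \cref{lem:c2_implies_dm} (whose side conditions you supply correctly through \cref{lem:useful_bounds_new} and \cref{lem:good_bounds}). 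A nice feature of your route, worth keeping in mind, is that it avoids the Heine–Cantor compactness argument used in \cref{thm:global_convergence_c2} and so delivers the result in infinite dimensions as the corollary requires; the only use of $C^2$ regularity is pointwise continuity of $f''$ at $x^*$, which is all that \cref{ass:semismooth} and \cref{lem:c2_implies_dm} ask for.
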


The following result is a counterpart in our setting of classical results on local superlinear convergence of SSN, see, e.g., Theorem 2.11 in \cite{hintermuller2010semismooth}. In particular, if all the assumptions are fullfilled locally around a solution and the starting point of \cref{alg:proximal_newton} is sufficiently close to that solution, then the algorithm converges superlinearly to that solution.
\begin{theorem}[Local superlinear convergence]
\label{thm:local_superlinear}
Let the assumptions of \cref{thm:local_convergence_1} hold. Let also $(x_k)_{k \in \mathbb{N}}$ be the iterates of \cref{alg:proximal_newton} with a starting point $x_0\in \dom F$ which is sufficiently close to an isolated local minimum $x^*$ in problem \eqref{eq:opt}. Let additionally $F$ be locally convex around $x^*$, and the semismoothness at $x^*$ \cref{ass:semismooth} hold. Let also either 
DM condition \eqref{eq:DM_condition} or condition \eqref{eq:uniform_ss_condition_convex} hold. 

Then,
\[x_k \to x^*\text{ superlinearly}, \qquad F(x_k)-F^* \to 0\text{ superlinearly}, \qquad\norm{F'(x_k)}{*} \to 0\text{ superlinearly as $k \to \infty$}.\]

\end{theorem}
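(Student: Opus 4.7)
The proof will be a local analogue of \cref{thm:global_superlinear}, mirroring the same architecture but replacing the global ingredients with the local ones supplied by \cref{thm:local_convergence_1} and \cref{lem:useful_bounds_local}. The plan is to show that all hypotheses of \cref{thm:fast_local} are satisfied eventually on the trajectory, and then to read off the three superlinear rates from its conclusions combined with the refined bounds from \cref{thm:local_convergence_1}.

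First, since the assumptions of \cref{thm:local_convergence_1} hold and $x_0 \in B_R(x^*)$ is chosen with $\|F'(x_0)\|_* \le \Delta$, that theorem guarantees $x_k \to x^*$ linearly and $F(x_k) \to F(x^*)$ linearly. Inspecting its proof shows that the whole trajectory stays in $B_R(x^*)$; combined with the standing local assumption $H(x) \succeq \mu I$ on $B_R(x^*)$ with $\mu > 0$, we obtain $H(x_k) \succeq 0$ for every $k$. The local PL assumption of \cref{thm:local_convergence_1} is \cref{ass:PL} restricted to $B_R(x^*)$, so it holds in particular at each $x_k$ with $F_U^* = F(x^*)$, as required in \eqref{eq:PL_at_point}. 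The two inequalities in \eqref{eq:useful_bounds_abstract} follow directly from \cref{lem:useful_bounds_local}, which itself is tailor-made for this localised setting.

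With these items in place, together with the hypotheses that $F$ is locally convex around $x^*$, that \cref{ass:semismooth} holds at $x^*$, and that one of \eqref{eq:DM_condition} or \eqref{eq:uniform_ss_condition_convex} is in force, all assumptions of \cref{thm:fast_local} are satisfied. Applying it yields first \ref{item:lambdak_zero}, that is $\lambda_k \to 0$, and then \ref{item:local_superlinear} and \ref{item:gradients_superlinear}, which give
\[
\|x_{k+1}-x^*\| = \so(\|x_k-x^*\|) \quad\text{and}\quad \|F'(x_{k+1})\|_* = \so(\|F'(x_k)\|_*) \quad\text{as } k\to\infty.
\]
For the objective, the final paragraph of the proof of \cref{thm:local_convergence_1} (see the first two equalities in \eqref{eq:thm:local_convergence_1_proof_7}) establishes
\[
F(x_{k+1}) - F(x^*) \leq \frac{\lambda_k}{\lambda_k + 2\mu\beta\alpha^2}\,(F(x_k)-F(x^*)),
\]
so $\lambda_k \to 0$ immediately upgrades the linear convergence of $F(x_k)-F(x^*)$ to superlinear, finishing the proof.

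I do not expect a genuine obstacle: the whole argument is essentially a bookkeeping exercise matching the localised hypotheses of \cref{thm:local_convergence_1} to the hypothesis list of \cref{thm:fast_local}. The only point requiring a moment's care is verifying that the PL and quadratic-growth conditions, stated on $B_R(x^*)$, translate into the pointwise PL inequality \eqref{eq:PL_at_point} with $F_U^* = F(x^*)$ needed by \cref{thm:fast_local}; this is immediate because the iterates never leave $B_R(x^*)$ and $x^*$ is an isolated local minimum, so $F(x_k) \geq F(x^*)$ throughout.
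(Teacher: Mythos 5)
Your proof is correct and follows essentially the same route as the paper: verify that \cref{thm:local_convergence_1} and \cref{lem:useful_bounds_local} supply every hypothesis of \cref{thm:fast_local}, deduce $\lambda_k \to 0$, and then read off the superlinear rates for the iterates and subgradients from \cref{thm:fast_local} and for the objective values from the inequality $F_{k+1} \leq \frac{\lambda_k}{\lambda_k + 2\mu\beta\alpha^2}F_k$ in the proof of \cref{thm:local_convergence_1}. You have simply spelled out the bookkeeping that the paper's proof compresses into a few sentences.
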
    
\begin{proof}
Since the assumptions of \cref{thm:local_convergence_1} hold, we obtain $x_k \to x^*$ as $k \to \infty$ with $x^*$ being a local minimum. From \cref{thm:local_convergence_1} we also inherit \cref{ass:subproblem}, the PL inequality of \cref{ass:PL} at any $x_{k}$, and $H(x_k) \succeq 0$ for any $k$. %
Finally, inequalities \eqref{eq:useful_bounds_abstract} hold by \cref{lem:useful_bounds_local}. Thus, all the assumptions of \cref{thm:fast_local} hold and we have that $\lambda_k\to 0$, which by \cref{thm:local_convergence_1} and item \ref{item:gradients_superlinear} of \cref{thm:fast_local} prove the claimed superlinear convergence.
\end{proof}

\begin{remark}[On the assumption of $H(x)\succeq 0$]
\label{rem:Hgeq0}
\cref{thm:global_superlinear} and \cref{thm:local_superlinear} assume that $H(x)\succeq 0$ to show the transition to superlinear convergence. 
We would like to underline that this assumption, without loss of generality, covers the setting of a more flexible assumption that
\[
H(x)\succeq \mu_H I
\quad\text{and}\quad
\psi \ \text{is $\mu_\psi$-strongly-convex,}
\]
i.e., $\psi - \tfrac{\mu_\psi}{2}\norm{\cdot}{}^2$ is convex, where $\mu=\mu_H+\mu_\psi\geq 0$.
A slightly stronger assumption with $\mu>0$ was made, e.g., in~\cite{potzl2022second}. 
Indeed, if $\mu_H<0$, we can consider the transformation
\[
\widetilde{\psi} := \psi - \tfrac{\mu_\psi}{2}\|\cdot\|^2,
\qquad
\widetilde{H}(x) := H(x) + \mu_\psi I.
\]
This clearly implies that $\widetilde{\psi}$ is convex and, since $\mu_H+\mu_\psi\geq 0$,  also $\widetilde{H}(x)\succeq 0$. Moreover, as shown in  \cite[Lemma~1]{potzl2022second}, the algorithm with steps \eqref{eq:PLMSN_step} produces exactly the same iterates for the reformulated problem. Thus, to improve the readability of the paper, we used the assumption $H(x) \succeq 0$ for the results on superlinear convergence and assume throughout the paper that $\psi$  is convex, but bear in mind that these results are valid also when $H(x)$ has negative curvature and $\psi$ is sufficiently strongly convex. Moreover, our results also cover the opposite situation when $\psi$ is weakly convex, i.e., $\mu_\psi<0$, but $H(x)$ has sufficient positive curvature. 
These two scenarios were considered in \cite{potzl2022second} and both of them are also covered by our framework thanks to the transformation above. We underline also that the assumption $H(x) \succeq 0$ is not used to prove our global convergence rate results in \cref{sec:global rates}.
\end{remark}

\subsection{\texorpdfstring{Superlinear convergence via active manifold identification in the\newline finite-dimensional setting}{Superlinear convergence via active manifold identification in the finite-dimensional setting}}%

In \cref{thm:fast_local}, we established that a transition to fast %
superlinear convergence can occur under several  conditions. A consequence of \cref{lem:c2_implies_dm} is that the assumption $f \in C^2$ implies the Dennis--Moré condition \eqref{eq:DM_condition} of \cref{thm:fast_local} if $H\equiv f''$, and therefore also fast superlinear convergence of the iterates. In this subsection, we go further by showing that superlinear convergence can also occur when $f$ is not locally $C^2$, but instead satisfies a combination of \emph{partial smoothness} and \emph{semismoothness} of the gradient. Specifically, in the finite-dimensional setting, we demonstrate that our algorithm identifies an active manifold, along which the function  behaves smoothly, and that this identification enables rapid local convergence even when $\nabla f$ %
is not differentiable at the limit point $x^*$.
For this purpose, we utilise the above-mentioned notion of \emph{partial smoothness}, which originates from the influential work \cite{lewis2002active}, see also \cite{LewisZhang,borwein2006convex,drusvyatskiy2014optimality}. The key idea is to show that, under a partial smoothness assumption, the iterates eventually lie on a smooth manifold along which the objective function $F$ is $C^2$. This phenomenon has a rich history and forms a well-established part of variational analysis. Partial smoothness has been employed in the analysis of first-order methods~\cite{liang2014local,liang2017activity}, in applications areas such as inverse problems \cite{vaiter2017model} and variational inequalities \cite{MR4288533}, and more recently, it has found use in the context of Newton-type algorithms \cite{lewis2013nonsmooth,bareilles2023newton,MR4887740}. While we do not aim to give a comprehensive treatment of partial smoothness here, we refer the reader to the classical references~\cite{borwein2006convex,lewis2022partial,miller2005newton} and focus only on the aspects essential for our convergence analysis.

In this section we work in the finite-dimensional setting. Let $\mathcal{H} = \mathbb{R}^n$ for some $n \in \mathbb{N}$ so that $F \colon \mathbb{R}^n \to \overline{\mathbb{R}}$. We begin by recalling the definition of a $C^2$ manifold in the context of embedded $C^2$ submanifolds relevant to our analysis, %
see \cite[Definition 2.1.1]{berger2012differential}.
\begin{defn}[$C^2$-submanifold]\label{def:manifold}
Let $d\leq n$. A subset $\mathcal{M} \subset \mathbb{R}^n$ is called a $d$-dimensional \textit{$C^2$-submanifold} of $\mathbb{R}^n$ if for every $ x \in \mathcal{M}$ there exists an open neighbourhood $U_x \subset \mathbb{R}^n$ containing $x$ and a $C^2$ map $\psi_x \colon U_x  \to \mathbb{R}^n$ such that $\psi_x(U_x) \subset \mathbb{R}^n$ is open, $\psi$ is a $C^2$--diffeomorphism onto its image and 
\begin{equation*}
    \psi_x(U_x \cap \mathcal{M}) = \psi_x(U_x) \cap E_d
\end{equation*}
where $E_d$ is the $d$-dimensional linear subspace defined as $E_d := \{ x \in \mathbb{R}^n$: $x_{d+1}= \ldots = x_n = 0 \}$.
\end{defn}
We recall the following characterisation (see \cite[Theorem 2.1.2]{berger2012differential} and the proof) of a $C^2$--submanifold which will be important in the forthcoming. The set $ \mathcal{M} \subset \mathbb{R}^n$ is a $d$-dimensional $C^2$-submanifold of $\mathbb{R}^n$ if and only if for every $x \in \mathcal{M}$ there is an open subset $\Omega_x \subset \mathbb{R}^d$ containing $0 \in \mathbb{R}^d$ and an open neighbourhood $U \subset \mathbb{R}^n$ of $x$ in the topology of $\mathbb{R}^n$ and a $C^2$ map
\begin{equation}
\phi_x\colon \Omega_x  \to \mathbb{R}^n \label{eq:immerson_at_x}
\end{equation}
that satisfies the following properties:
\begin{enumerate}[label=(\roman*)]\itemsep=0em
\item  $\phi_x\colon\Omega_x \to \mathbb{R}^n$ is $C^2$, $\phi'(0)$ is injective and $\phi_x(0) = x$ (thus, $\phi_x$ is a $C^2$ immersion around $0$), 
    \item $\phi_x\colon \Omega_x \to \phi_x(\Omega_x) = U \cap \mathcal{M}$ is a homeomorphism with respect to the canonical subspace topology on $\mathcal{M}$,
    \item$\phi_x$ can be chosen (after translation to ensure that $\psi_x(x) = 0$) in the explicit form  
    \begin{equation}
        \phi_x = \psi_x^{-1} \circ i_d \colon \underbrace{ i_d^{-1}(\psi_x(U_x \cap \mathcal{M}))}_{\coloneqq  \Omega_x} \to U_x \cap \mathcal{M},\label{eq:concrete_immersion}
    \end{equation}
    where $\psi_x\colon U_x \to \psi_x(U_x) \subset \mathbb{R}^n$ is the $C^2$--diffeomorphism in \cref{def:manifold} and $i_d\colon\mathbb{R}^d \to \mathbb{R}^n$ denotes the canonical embedding defined by $i_d(x) = (x,0) \in E_d \subset \mathbb{R}^n$. Note that \eqref{eq:concrete_immersion} makes sense as by \cref{def:manifold} we have $\psi_x(U_x \cap \mathcal{M}) \subset E_d$ and therefore the inverse $i_d^{-1}\colon E_d \to \mathbb{R}^d$ is well defined in \eqref{eq:concrete_immersion}.

\end{enumerate}
 The collection $(\phi_x(\Omega_x),\phi_x^{-1})_{x \in \mathcal{M}}$ forms an atlas of $\mathcal{M}$ in the usual sense of differential geometry, cf. the proof of \cite[Theorem 2.2.10.1]{berger2012differential}.  
We make the following assumption for the subsequent two results.
\begin{ass}\label{ass:first_manifolds}
Assume that
    \begin{enumerate}[label=(\roman*)]\itemsep=0em
         \item\label{item:M_is_C2_manifold}         The set \( \mathcal{M} \subset \mathbb{R}^n \) is a \( C^2 \)-submanifold,
        \item\label{item:gradf_C1_on_M} 
  \( \nabla f|_\mathcal{M} \) is of class \( C^1 \), %
        \item\label{item:ball_R} There exists an $R>0$ such that $\grad f$ is Newton differentiable on $B_R(x^*)$ with Newton derivative $H$ and $H$ is uniformly bounded on $B_R(x^*)$.

    \end{enumerate}
\end{ass}

\cref{ass:first_manifolds} \ref{item:ball_R} essentially asks for \cref{ass:semismooth} to hold not only at $x^*$ but on $B_R(x^*)$, and for the Newton derivative to be bounded there in the sense of \eqref{eq:bounded_Hessian}. Now, due to the first item of this assumption, for every \( x \in \mathcal{M} \), if  $\phi_x \colon \Omega_x \to \mathbb{R}^n$ is a $C^2$ map of the form~\eqref{eq:immerson_at_x} mapping \( \Omega_x \) homeomorphically onto \( \phi_x(\Omega_x) \subset \mathcal{M} \), then the composition
\[
\nabla f \circ \phi_x \colon \Omega_x \to \mathbb{R}^n
\]
is of class \( C^1 \).

The following lemma establishes a form of \emph{uniform semismoothness}, which will be instrumental in deriving the superlinear convergence of the algorithm via \cref{thm:fast_local}.

\begin{lem}\label{lem:manifolds_1}
Assume \cref{ass:first_manifolds}. Consider the local \( C^2 \)-immersion 
\[
\phi_{x^*} \colon \Omega_{x^*} \subset \mathbb{R}^d \to \mathcal{M} \subset \mathbb{R}^n
\]
as defined in \eqref{eq:concrete_immersion}. 
Then, there exists a radius \( r > 0 \) such that ${B_r(0)} \subset \Omega_{x^*}$ %
and for every \( \varepsilon > 0 \), there exists \( \delta > 0 \) satisfying %
\begin{equation*}
  \|\nabla f(\phi_{x^*}(w)) - \nabla f(\phi_{x^*}(z)) - H(\phi_{x^*}(z))(\phi_{x^*}(w) - \phi_{x^*}(z))\|_* 
  \leq \varepsilon \|\phi_{x^*}(w)-\phi_{x^*}(z)\|
\end{equation*}
for all \( w, z \in {B_r(0)} \) with \( \|w - z\| \leq \delta \).
 \end{lem}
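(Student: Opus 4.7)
Let $\phi \coloneqq \phi_{x^*}$ and $g \coloneqq \nabla f \circ \phi$. By \cref{ass:first_manifolds}\ref{item:gradf_C1_on_M} combined with the $C^2$-regularity of $\phi$, the map $g \colon \Omega_{x^*} \to \mathbb{R}^n$ is of class $C^1$. The plan is to choose $r > 0$ so small that $\overline{B_r(0)} \subset \Omega_{x^*}$ and $\phi(\overline{B_r(0)}) \subset B_R(x^*)$ (possible by continuity of $\phi$ and $\phi(0) = x^*$) and then to exploit the $C^1$-regularity of $g$ on the compact set $\overline{B_r(0)}$ in conjunction with the pointwise Newton differentiability of $\nabla f$ provided by \cref{ass:first_manifolds}\ref{item:ball_R}.

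The first substantive step is to establish the pointwise identity
\[
g'(w) = H(\phi(w))\, \phi'(w), \qquad w \in B_r(0),
\]
viewed as equality of linear maps $\mathbb{R}^d \to \mathbb{R}^n$. For fixed $w\in B_r(0)$ and $v\in\mathbb{R}^d$ I would apply \eqref{eq:semismoothness_def} at the point $\phi(w+tv)\in B_R(x^*)$ with the perturbation $\phi(w)-\phi(w+tv)$, so that $H$ ends up evaluated at the base point $\phi(w)$; this yields
\[
\nabla f(\phi(w+tv)) - \nabla f(\phi(w)) - H(\phi(w))\bigl(\phi(w+tv)-\phi(w)\bigr) = o(\|\phi(w+tv)-\phi(w)\|).
\]
Dividing by $t$ and sending $t\to 0$, invoking $(g(w+tv)-g(w))/t\to g'(w)v$ and the $C^1$-expansion $(\phi(w+tv)-\phi(w))/t \to \phi'(w)v$, together with boundedness of $H$, then produces the identity.

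Since $g\in C^1(\overline{B_r(0)})$, the derivative $g'$ is uniformly continuous on $\overline{B_r(0)}$, and a standard fundamental-theorem-of-calculus argument gives, for every $\varepsilon_1>0$, a $\delta_1>0$ such that
\[
\|g(w)-g(z)-g'(z)(w-z)\| \leq \varepsilon_1\|w-z\| \qquad \forall\, w,z\in B_r(0),\ \|w-z\|\leq \delta_1.
\]
Substituting $g'(z)=H(\phi(z))\phi'(z)$ and applying the triangle inequality yields
\[
\|\nabla f(\phi(w))-\nabla f(\phi(z))-H(\phi(z))(\phi(w)-\phi(z))\|_* \leq \varepsilon_1\|w-z\| + M\,\|\phi(w)-\phi(z)-\phi'(z)(w-z)\|,
\]
where $M$ is the uniform bound on $H$. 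The $C^2$-Taylor estimate for $\phi$ on $\overline{B_r(0)}$ then provides a constant $C>0$ with $\|\phi(w)-\phi(z)-\phi'(z)(w-z)\|\leq C\|w-z\|^2$.

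To finish, I would use the explicit representation $\phi=\psi_{x^*}^{-1}\circ i_d$ in \eqref{eq:concrete_immersion} and the fact that $\psi_{x^*}$ is a $C^2$-diffeomorphism to conclude that $\phi$ is bi-Lipschitz on $\overline{B_r(0)}$; in particular there is a constant $K$ with $\|w-z\|\leq K\|\phi(w)-\phi(z)\|$. Combining the above gives
\[
\|\nabla f(\phi(w))-\nabla f(\phi(z))-H(\phi(z))(\phi(w)-\phi(z))\|_* \leq \bigl(K\varepsilon_1 + KMC\|w-z\|\bigr)\|\phi(w)-\phi(z)\|,
\]
so that, given $\varepsilon>0$, the choices $\varepsilon_1=\varepsilon/(2K)$ and $\delta=\min\{\delta_1,\varepsilon/(2KMC)\}$ close the argument. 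The main obstacle is the identification $g'(w)=H(\phi(w))\phi'(w)$: it hinges on invoking Newton differentiability in the asymmetric form in which $H$ appears at the base point, and on a careful limit passage exploiting the $C^2$-smoothness of $\phi$ and the uniform bound on $H$ to discard the off-diagonal $H(\phi(w+tv))$-terms that would otherwise arise.
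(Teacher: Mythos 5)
Your overall structure mirrors the paper's: set $g \coloneqq \nabla f \circ \phi_{x^*}$ (which is $C^1$ by \cref{ass:first_manifolds}), establish the identity $g'(z) = H(\phi_{x^*}(z))\phi_{x^*}'(z)$, combine a mean-value estimate for $g$ with a Taylor estimate for $\phi_{x^*}$, and conclude via the bi-Lipschitz property of $\phi_{x^*}$ on a small closed ball. The differences in the Taylor step are minor: you use a quadratic $C^2$-remainder $C\|w-z\|^2$ for $\phi_{x^*}$, whereas the paper uses the fundamental theorem of calculus with uniform continuity of $\phi_{x^*}'$; both are adequate.

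The genuine problem is your direct derivation of the chain-rule identity $g'(w) = H(\phi(w))\phi'(w)$, which the paper instead obtains by invoking a chain rule for Newton differentiable maps together with the identification of $g'$ with the resulting Newton derivative. Your intermediate claim
\[
\nabla f(\phi(w+tv)) - \nabla f(\phi(w)) - H(\phi(w))\bigl(\phi(w+tv)-\phi(w)\bigr) = o\bigl(\|\phi(w+tv)-\phi(w)\|\bigr)
\]
does not follow from \eqref{eq:semismoothness_def}. You propose to apply \eqref{eq:semismoothness_def} at the point $\phi(w+tv)$ with perturbation $h=\phi(w)-\phi(w+tv)$, so that $H(x+h)$ becomes $H(\phi(w))$. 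But \eqref{eq:semismoothness_def} is a pointwise statement: the limit in $h$ is taken at a \emph{fixed} base point $x$. In your passage $t\to 0$ both the base point $\phi(w+tv)$ and the perturbation $h$ vary, and pointwise Newton differentiability on $B_R(x^*)$ supplies no uniformity of the little-$o$ over the moving base point; the modulus implicit in the definition can degrade as the base point changes, so the diagonal argument does not go through. Applying \eqref{eq:semismoothness_def} at the fixed point $\phi(w)$ instead places $H$ at the varying point $\phi(w+tv)$, and since $H$ is only uniformly bounded and not assumed continuous you cannot move its argument back to $\phi(w)$ in the limit. This is exactly the subtlety that makes the chain rule for Newton derivatives a nontrivial result; to close the gap you should either cite such a chain rule, as the paper does, or supply an argument that correctly handles the nonuniqueness of Newton derivatives and the lack of continuity of $H$.
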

\begin{proof}%
Without loss of generality, with $U_{x^*}$ as in \cref{def:manifold}, we may assume that
\begin{equation*}
{B_R(x^*)} \subset U_{x^*},
\end{equation*}
otherwise \( R \) can be reduced if necessary (this is possible 
since \( U_{x^*} \) is open and contains \( x^* \)). To show the statement of the lemma, we  fix an arbitrary $\varepsilon>0$. %
Since $\Omega_{x^*}$ is an open neighbourhood of $0 \in \mathbb{R}^d$ %
and $\phi_{x^*}$ is continuous, there exists $r  > 0$ such that
\[
{B_{r}(0)} \subset \Omega_{x^*} \qquad \text{and} \qquad 
\phi_{x^*}({B_{r}(0)}) \subset {B_R(x^*)} \subset U_{x^*}.
\]
By  \cref{ass:first_manifolds} \ref{item:gradf_C1_on_M}, the gradient $\grad f$ is of class $C^1$ on the manifold $\mathcal{M}$, 
hence we have that $\nabla f \circ \phi_{x^*} \colon \Omega_{x^*}\subset \mathbb{R}^{d} \to \mathbb{R}^n$ is of class $C^1$ with Fréchet derivative $(\nabla f \circ \phi_{x^*} )' \colon \Omega_{x^*} \to \mathcal{L}(\mathbb{R}^d,\mathbb{R}^n)$ and also by the characterisation above, the mapping $\phi_{x^*}\colon \Omega_{x^*} \to \mathbb{R}^n$ is $C^2$ with  Fréchet derivative $\phi_{x^*}'\colon \Omega_{x^*} \to \mathcal{L}(\mathbb{R}^d,\mathbb{R}^n)$. In particular the derivatives $\phi_{x^*}'$ and 
$(\nabla f \circ \phi_{x^*})'$ are continuous over ${B_{r}(0)}$ and therefore by the Heine--Cantor theorem, even uniformly continuous. Hence, as in \cref{thm:global_convergence_c2}, we find a $\delta>0$ such that $w,z \in {B_{r}(0)}$ and $\|w-z\| \leq \delta$ implies %
\begin{align*}
    \|(\nabla f \circ \phi_{x^*})'(z) - (\nabla f \circ \phi_{x^*})'(w) \|_* \leq \varepsilon \qquad \text{and}\qquad
    \|\phi_{x^*}'(z) -   \phi_{x^*}'(w) \| \leq \varepsilon.
\end{align*}
 By the fundamental theorem of calculus we conclude that
\begin{align}
    \|\nabla f(\phi_{x^*}(w)) - \nabla f(\phi_{x^*}(z)) - (\nabla f\circ\phi_{x^*})'(z)(w-z)\|_* &\leq \varepsilon \| w-z\| \label{eq:uniform_manifold_smooth_1},\\
       \| \phi_{x^*}(w) - \phi_{x^*}(z) - \phi_{x^*}'(z)(w - z) \| &\leq \varepsilon \|w - z\|.\label{eq:uniform_manifold_smooth_2}
\end{align} 
Using the Newton differentiability of $\grad f$, 
\[(\nabla f\circ \phi_{x^*})'(v) = H(\phi_{x^*}(v)) \phi_{x^*}'(v)
\quad \text{for all $v \in \interior(B_r(0))$},
\]   
by the chain rule\footnote{To be more precise, the left-hand side above is a Frechét derivative, which is equal to the Newton derivative as the former exists, then we apply the chain rule for Newton differentiable functions to get the right-hand side.} for Newton differentiable functions \cite[Proposition 3.8]{ulbrich2011semismooth}, which  is applicable, since for given $v \in \interior({B_r(0)})$ the set $\phi_{x^*}(\interior(B_r(0)))$ is an open environment of $\phi_{x^*}(v)$ contained inside ${B_R(x^*)}$ where the Newton derivative $H(\cdot)$ is uniformly bounded by the constant $M>0$, and also $\phi_{x^*}'(\cdot)$ is uniformly bounded on $\interior(B_{r}(0))$ (by its continuity on the compact set ${B_{r}(0)}$) which is an open neighbourhood of $v \in \interior(B_r(0))$ by choice. Thus we have
\[    \|\nabla f(\phi_{x^*}(w)) - \nabla f(\phi_{x^*}(z)) - H(\phi_{x^*}(z)) \phi_{x^*}'(z)(w-z)\|_* \leq \varepsilon \| w-z\|
\]
for $w,z \in\interior(B_r(0))$ with $\|w-z\| \leq \delta$. Consequently we deduce by \eqref{eq:uniform_manifold_smooth_1}, \eqref{eq:uniform_manifold_smooth_2}, and the triangle inequality,
\begin{align}
  &\|\nabla f(\phi_{x^*}(w)) - \nabla f(\phi_{x^*}(z)) - H(\phi_{x^*}(z))(\phi_{x^*}(w) - \phi_{x^*}(z))\|_* \notag\\
  &\leq \|\nabla f(\phi_{x^*}(w)) - \nabla f(\phi_{x^*}(z)) - H(\phi_{x^*}(z))\phi_{x^*}'(z)(w - z)\|_* \notag\\
  &\quad  + \norm{H(\phi_{x^*}(z))\phi_{x^*}'(z)(w - z) - H(\phi_{x^*}(z))(\phi_{x^*}(w)-\phi_{x^*}(z))}{*} \notag\\
  &\leq \varepsilon\| w-z\| +  \|H(\phi_{x^*}(z))\|_\mathrm{op}\| \phi_{x^*}(w) - \phi_{x^*}(z) - \phi_{x^*}'(z)(w - z) \| \notag
  \\
  &\leq \left(1 + M \right)\varepsilon \| w - z\|. \label{eq:lem_manifold_identification_2} 
\end{align}
Recall that we used the explicit form \eqref{eq:concrete_immersion}, that ensures $\phi_{x^*} = \psi_{x^*}^{-1} \circ i_d\colon \Omega_{x^*} \to \mathbb{R}^n$ with the $C^2$--diffeomorphism $\psi_{x^*}$ from \cref{def:manifold}. In particular, \( \psi_{x^*} \) has a bounded first derivative on the compact set \( \overline{\phi_{x^*}(\interior(B_{r}(0)))} \subset U_x \subset  \mathbb{R}^n \), and is therefore Lipschitz continuous. We conclude that the map $\phi_{x^*}^{-1} = i_d^{-1} \circ \psi_{x^*}\colon \phi_{x^*}(\interior(B_r(0))) \subset \mathcal{M} \to \mathbb{R}^d$ is Lipschitz continuous, as the map $i_d^{-1} \colon E_d \subset \mathbb{R}^n \to \mathbb{R}^d$ is 1-Lipschitz (note again that $i^{-1}_d \circ \psi_{x^*}$ is well defined in this setting as $\psi_{x^*}(x) \subset E_d$ for $x \in \phi_{x^*}(\interior(B_r(0))) \subset  \mathcal{M}$). In particular we find $L>0$ such that 
$$ \|w-z\| =  \|\phi_{x^*} ^{-1}(\phi_{x^*}(w))- \phi_{x^*}^{-1}(\phi_{x^*}(z))\| \leq L\| \phi_{x^*}(w) - \phi_{x^*}(z)\|$$
for all $w,z \in \interior(B_r(0))$. Eventually we deduce from the previous observations and \eqref{eq:lem_manifold_identification_2}, that
\begin{align*}
  \|\nabla f(\phi_{x^*}(w)) - \nabla f(\phi_{x^*}(z)) - H(\phi_{x^*}(z))(\phi_{x^*}(w) - \phi_{x^*}(z))\|_*
  \leq \left(1 + M \right) L \varepsilon \| \phi_{x^*}(w) - \phi_{x^*}(z)\|. 
\end{align*}
By rescaling $\varepsilon>0$ and shrinking $r$ (to obtain the result on the closed ball), the assertion follows.
\end{proof}

\begin{prop}\label{prop:manifolds_lambda_zero}
Let $(x_k)_{k \in \mathbb{N}}$ be the iterates of \cref{alg:proximal_newton} and $x_k \to x^*$ as $k \to \infty$. Let also $H(x_k) \succeq 0 $ for sufficiently large $k$ and suppose that $x_k \in \mathcal{M}$ and $x_+(\lambda_k\slash 2, x_k) \in \mathcal{M}$ for sufficiently large $k$. Assume also \cref{ass:first_manifolds}. Then, we have that %
\begin{equation}
           \frac{ \norm{\nabla f(x_{+}(\lambda_k/2, x_k)) -  \nabla f(x_k) - H(x_k)(x_{+}(\lambda_k/2, x_k)-x_k)}{*}}{\|x_{+}(\lambda_k/2, x_k) - x_k\|} \to 0 \quad \text{as $k \to \infty$} \label{eq:manifold_lem12}
\end{equation}
and hence, $\lambda_k \to 0$ as $k \to \infty$.
\end{prop}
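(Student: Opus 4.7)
The plan is to verify the hypothesis of \cref{lem:helper_lambdak_zero}, from which $\lambda_k \to 0$ will follow directly (under the local convexity of $F$ around $x^*$ that is standing in this section). The main work is to transfer the uniform semismoothness estimate of \cref{lem:manifolds_1}, which lives in the chart domain $\Omega_{x^*} \subset \mathbb{R}^d$, back to the iterates $x_k$ and $x_{k,+} \coloneqq x_+(\lambda_k/2, x_k)$ on the manifold.

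First, I would use \cref{lem:strange_convergence}, which applies because $H(x_k) \succeq 0$ eventually and $x_k \to x^*$, to conclude that $x_{k,+} \to x^*$ as well. Next, I would invoke \cref{lem:manifolds_1} to fix a radius $r > 0$ with $B_r(0) \subset \Omega_{x^*}$ and a modulus-of-continuity statement: for every $\varepsilon > 0$ there exists $\delta > 0$ such that the uniform semismoothness bound holds for all $w, z \in B_r(0)$ with $\|w-z\| \leq \delta$. By property (ii) of the local immersion (see \eqref{eq:immerson_at_x}ff.), $\phi_{x^*}$ is a homeomorphism onto $U_{x^*} \cap \mathcal{M}$, so its inverse $\phi_{x^*}^{-1}$ is continuous at $x^*$. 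Since $x_k, x_{k,+} \in \mathcal{M}$ for sufficiently large $k$ and both converge to $x^*$, for all large enough $k$ we can write $x_k = \phi_{x^*}(z_k)$ and $x_{k,+} = \phi_{x^*}(w_k)$ with $z_k, w_k \in B_r(0)$; moreover continuity gives $z_k, w_k \to 0$, and in particular $\|z_k - w_k\| \to 0$.

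Fixing an arbitrary $\varepsilon > 0$ and the associated $\delta > 0$ from \cref{lem:manifolds_1}, for all sufficiently large $k$ one has $\|z_k - w_k\| \leq \delta$, so that
\[
\|\nabla f(x_{k,+}) - \nabla f(x_k) - H(x_k)(x_{k,+} - x_k)\|_* \leq \varepsilon \|x_{k,+} - x_k\|.
\]
Since $\varepsilon > 0$ was arbitrary, the ratio in \eqref{eq:manifold_lem12} tends to zero, which provides a sequence $\varepsilon_k \to 0$ satisfying exactly condition \eqref{eq:eps_condition} of \cref{lem:helper_lambdak_zero}. Applying that lemma then yields $\lambda_k \to 0$, completing the proof.

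The only subtlety is topological: making sure that the iterates can be lifted to the chart domain $B_r(0)$ for large $k$ and that their lifts stay close. This is precisely where the homeomorphism property of the local parametrisation $\phi_{x^*}$ and the joint convergence $x_k, x_{k,+} \to x^*$ enter; once that is in place, \cref{lem:manifolds_1} does all the analytic work and no new estimates are needed.
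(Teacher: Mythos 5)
Your proof is correct and follows essentially the same route as the paper's: invoke \cref{lem:strange_convergence} to get $x_{k,+}\to x^*$, lift both iterate sequences through the local parametrisation $\phi_{x^*}$ into the chart domain using the homeomorphism property, apply the uniform semismoothness estimate of \cref{lem:manifolds_1} to get \eqref{eq:manifold_lem12}, and then feed the resulting $\varepsilon_k\to 0$ into \cref{lem:helper_lambdak_zero}. You were also right to flag the implicit use of local convexity of $F$ needed for \cref{lem:helper_lambdak_zero}, which the paper's statement of the proposition omits but its downstream application in \cref{thm:fast_convergence_partial_smoothness} supplies.
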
  
\begin{proof}
 We follow the argument in \cref{thm:global_convergence_c2}, denoting again \( x_{k,+} \coloneqq  x_+\left( \lambda_k / 2, x_k \right) \). We first show that \eqref{eq:manifold_lem12} holds.  
For this purpose fix an  arbitrary $\varepsilon>0$ and note that, due to \cref{lem:strange_convergence} the convergence of $x_k \to x^*$ also implies $x_{k,+} \to x^*$. Now consider the limit point $x^*$ and the associated immersion $\phi_{x^*} \colon \Omega_{x^*} \to \mathbb{R}^n$ from \eqref{eq:concrete_immersion}. Employing \cref{lem:manifolds_1}, we find a $\delta > 0$ and radius $r>0$ such that $w,z \in B_r(0)$ and $\|z-w\|\leq \delta$ implies
\begin{equation}
  \|\nabla f(\phi_{{x}^*}(w)) - \nabla f(\phi_{{x}^*}(z)) - H(\phi_{{x}^*}(z))(\phi_{{x}^*}(w) - \phi_{{x}^*}(z))\|_* 
  \leq \varepsilon \|\phi_{{x}^*}(w)-\phi_{{x}^*}(z)\|.\label{eq:manifold_lem13}
\end{equation}
Recall that $\phi_{x^*}\colon\Omega_{x^*} \to \phi_{x^*}(\Omega_{x^*})$ is a homeomorphism, and consequently $\phi_{x^*}(B_r(0)) \subset \mathcal{M}$ is %
a neighbourhood (in the topology of $\mathcal{M}$) of $\phi_{x^*}(0) = x^*$. From this and the facts that $x_{k,+} \to x^*$ and $x_k, x_{k,+} \in \mathcal{M}$ for $k \geq k_0$ for some $k_0 \in \mathbb{N}$,  we infer that there is a possibly larger index $k_1\geq k_0$ such that $x_k, x_{k,+} \in \phi_{x^*}(B_r(0))$ for all $k \geq k_1$. For those $k$, by injectivity of $\phi_{x^*}$, we may set $w_{k}\coloneqq  \phi_{x^*}^{-1}(x_k)$ and $w_{k,+}\coloneqq \phi_{x^*}^{-1}(x_{k,+})$.
Then by construction, we have $w_k, w_{k,+} \in B_r(0)$ and by continuity of $\phi_{x^*}^{-1}$ we deduce $w_k \to 0$ and $w_{k,+} \to 0$. Thus, by possibly enlarging $k_1$, we also have $\|w_k - w_{k,+}\| \leq \delta$ for all $k \geq k_1$. These observations combined with \eqref{eq:manifold_lem13}   eventually lead to
\begin{align*}
  &\|\nabla f(x_{k,+}) - \nabla f(x_{k}) - H(x_{k})(x_{k,+} - x_{k})\|_* \\
  &\quad =
  \|\nabla f(\phi_{x^*}(w_{k,+})) - \nabla f(\phi_{x^*}(w_{k})) - H(\phi_{x^*}(w_{k}))(\phi_{x^*}(w_{k,+}) - \phi_{x^*}(w_{k}))\|_* \\
  &\quad \leq \varepsilon \|\phi_{x^*}(w_{k,+}) - \phi_{x^*}(w_{k})\|  = \varepsilon\|x_{k,+} - x_{k} \|
  .\label{eq:manifold_lem14}
\end{align*}
for all $k \geq k_1$. As $\varepsilon>0$ was arbitrary, we infer \eqref{eq:manifold_lem12}. Now we can apply \cref{lem:helper_lambdak_zero} with $\varepsilon_k$ set as the left-hand side of \eqref{eq:manifold_lem12} to get $\lambda_k \to 0$ as $k\to \infty$.%
\end{proof} 
To apply this result and ensure superlinear %
convergence, it is essential that all iterates as well as the limit point eventually lie on a \( C^2 \) manifold --- commonly referred to as the \emph{active manifold}. This is precisely where the concept of \emph{partial smoothness} becomes central. We begin with several foundational definitions, following \cite{LewisZhang}. Let \( C \) be a non-empty convex set. The \emph{subspace parallel to \( C \)} is defined as
\[
\parm\, C \coloneqq  \aff\, C - x \quad \text{for any } x \in C,
\]
where \( \aff\, C \) denotes the affine hull (or affine span) of \( C \). The expression \( \parm\, C \) yields a linear subspace that is parallel to the affine set \( \aff\, C \), and is independent of the particular choice of \( x \in C \).

\begin{defn}[Partial smoothness]\label{def:partial_smoothness}
 We say that a function $g\colon \mathbb{R}^n \to \overline{\mathbb{R}}$ is \textit{$C^2$-partly smooth at $\bar x$ relative to $\mathcal{S}$} if $\mathcal{S}$ is a $C^2$-smooth manifold around $\bar x$ and
\begin{enumerate}[label=(\roman*)]\itemsep=0em
    \item\label{item:PS:RS} (restricted smoothness) $g|_\mathcal{S}$ is $C^2$ around $\bar x$,
    \item\label{item:PS_reg} (regularity) in a neighbourhood of $\bar x$ in $\mathcal{S}$, $g$ is subdifferentially regular and has a subgradient,
    \item\label{item:PS_NS} (normal sharpness) $N_{\mathcal{S}}(\bar x) = \parm \partial g(\bar x)$ where $N_{\mathcal{S}}(\bar x)$ means the normal space to $\mathcal{S}$ at $\bar x$,
    \item\label{item:PS_SC} (subgradient continuity) the  map $\partial g$ is continuous at $\bar x$ relative to $\mathcal{S}$.
\end{enumerate}    
\end{defn}
The normal sharpness condition essentially encodes that the function $g$, which is $C^2$-smooth around $\bar x$ on the manifold, must change drastically along directions that are normal to $\mathcal{S}$. Indeed, another characterisation \cite[Definition 1.3]{miller2005newton} of the normal sharpness condition is that the function $t \mapsto g(\bar x + td)$ is \emph{not} differentiable at $t=0$ for any $d \in N_{\mathcal{S}}(\bar x)$.  An example of a partially smooth function is $g(x) = \norm{x}{\ell^1}$, which is partially smooth at $\bar x$ relative to the manifold $\mathcal{S} = \{ x \in \mathbb{R}^n : I_x \subseteq I_{\bar x}\}$ where the notation $I_x \coloneqq  \{i \in \mathbb{N} : x_i \neq 0 \}$. 
For more examples we refer to e.g. \cite[\S 5.1]{liang2017activity} and \cite{miller2005newton}.

We first show that our algorithm \algname{LeAP-SSN} identifies the active manifold in finite time. For this purpose and for later use, we introduce the next assumption.
\begin{ass}\label{ass:manifolds_main_ass}
    Assume that
    \begin{enumerate}[label=(\roman*)]\itemsep=0em
        \item\label{item:F_PS} $F$ is $C^2$-partially smooth at a stationary point $x^*$ relative to the $C^2$-smooth manifold $\mathcal{M} \subset \mathbb{R}^n$,
        \item\label{item:strict_comp} The following \emph{non-degeneracy} condition holds:
\[0 \in \relint \partial F(x^*),\]
 where $\relint$ means the relative interior.
    \end{enumerate}
\end{ass}
The non-degeneracy condition in \cref{ass:manifolds_main_ass} \ref{item:strict_comp} is also referred to as \emph{strict complementarity}. These assumptions are standard\footnote{ One could explore the possibility to weaken the strict complementarity condition, see \cite{MR3865691}, however a full examination and treatment of these topics is far beyond the scope of this work.} in the literature, see e.g. \cite{Godeme}, \cite[Assumption 1.5]{miller2005newton}. In addition, \cref{prop:finite_time_id} suggests designing an algorithm that switches once finite identification has taken place to utilise the smooth structure, as considered for example in \cite{MR4620237}. 
\begin{prop}[Finite time identification]\label{prop:finite_time_id}
Let $(x_k)_{k \in \mathbb{N}}$ be the iterates of \cref{alg:proximal_newton}, $x_k \to x^*$ and $F(x_k) \to F(x^*)$ as $k \to \infty$, where $x^*$ is a global minimum. Assume also \cref{ass:manifolds_main_ass}.  
Then, we have that $x_k \in \mathcal{M}$ for sufficiently large $k$.

If, in addition, $H(x_k) \succeq 0$ for sufficiently large $k$, the PL condition \cref{ass:PL} holds,  %
$\grad f$ is Lipschitz  on $\interior(B_R(x^*))$ with constant $L_1$, and $H$ is uniformly bounded (in the sense of \eqref{eq:bounded_Hessian}) on $B_R(x^*)$, we also have $x_+(\lambda_k\slash 2, x_k) \in \mathcal{M}$ for sufficiently large $k$.
\end{prop}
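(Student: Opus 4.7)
The plan is to reduce both assertions to a standard \emph{active manifold identification theorem} in the spirit of Hare--Lewis: under $C^2$ partial smoothness of $F$ at $x^*$ relative to $\mathcal{M}$ together with the non-degeneracy $0 \in \relint \partial F(x^*)$, any sequence $(y_k)_{k \in \mathbb{N}}$ satisfying $y_k \to x^*$, $F(y_k) \to F(x^*)$ and $\dist(0, \partial F(y_k)) \to 0$ must eventually lie in $\mathcal{M}$. The task therefore reduces to verifying these three hypotheses, first for the iterates $x_k$ and then for the trial points $x_+(\lambda_k/2, x_k)$.

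For the first assertion, the hypotheses $x_k \to x^*$ and $F(x_k) \to F(x^*)$ are assumed, so it suffices to establish $\norm{F'(x_k)}{*} \to 0$. Recall from \cref{lem:functionValuesGradientRelation} that $F(x_k) - F(x_{k+1}) \geq \beta\alpha^2 \norm{F'(x_{k+1})}{*}^2 / \lambda_k$, and by \cref{lem:acceptance_inner_nonconv} we have the uniform bound $\lambda_k \leq \overline{\lambda}$. Rearranging gives
\[
\norm{F'(x_{k+1})}{*}^2 \;\leq\; \frac{\overline{\lambda}}{\beta\alpha^2}\bigl(F(x_k) - F(x_{k+1})\bigr) \;\longrightarrow\; 0,
\]
since $F(x_k)$ is monotone decreasing and convergent. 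Applying the identification theorem then yields $x_k \in \mathcal{M}$ for all sufficiently large $k$.

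For the second assertion, set $x_{k,+} := x_+(\lambda_k/2, x_k)$. Since $H(x_k) \succeq 0$ eventually and $x_k \to x^*$, \cref{lem:strange_convergence} gives $x_{k,+} \to x^*$. The first-order condition in \cref{ass:subproblem} applied with regulariser $\lambda_k/2$ shows that
\[
\widetilde{F}'(x_{k,+}) \;:=\; f'(x_{k,+}) - f'(x_k) - H(x_k)(x_{k,+}-x_k) - \tfrac{\lambda_k}{2}\mathcal{R}(x_{k,+}-x_k)
\]
is an element of $\partial F(x_{k,+})$. Using the Lipschitz bound $L_1$ on $f'$, the uniform bound $\norm{H(x_k)}{\mathrm{op}} \leq M$ on $B_R(x^*)$, and $\lambda_k \leq \overline{\lambda}$, the triangle inequality gives
\[
\norm{\widetilde{F}'(x_{k,+})}{*} \;\leq\; \bigl(L_1 + M + \tfrac{\overline{\lambda}}{2}\bigr)\norm{x_{k,+}-x_k}{} \;\longrightarrow\; 0
\]
since both $x_k, x_{k,+} \to x^*$. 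Invoking the PL condition at $x_{k,+}$ then yields $F(x_{k,+}) - F^* \leq \norm{\widetilde{F}'(x_{k,+})}{*}^2/(2\mu) \to 0$, and hence $F(x_{k,+}) \to F(x^*)$. The identification theorem again applies to give $x_{k,+} \in \mathcal{M}$ for large $k$.

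The main delicacy of the argument is that the auxiliary points $x_{k,+}$ are trial points that may have been rejected by the algorithm, so one cannot simply inherit subgradient convergence from the iterate bound \eqref{eq:functionValuesGradientRelation}. The resolution is to exhibit a concrete subgradient of $F$ at $x_{k,+}$ directly via the first-order optimality of the subproblem and then control its norm by $\norm{x_{k,+}-x_k}{}$, which vanishes by \cref{lem:strange_convergence}; the Lipschitz and boundedness assumptions on $f'$ and $H$ are precisely what is needed to make this control quantitative.
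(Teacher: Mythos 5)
Your proposal follows essentially the same route as the paper's proof: invoke the Hare--Lewis active-manifold identification theorem (in the paper, \cite{LewisZhang}, Theorem 4.10), then verify its three hypotheses separately for the iterates $x_k$ and the trial points $x_{k,+}$, producing an explicit element of $\partial F(x_{k,+})$ from the subproblem's first-order condition and bounding its norm by $\norm{x_{k,+}-x_k}{}$ via the Lipschitz bound $L_1$, the uniform bound $M$ on $H$, and the uniform bound $\overline{\lambda}$ on $\lambda_k$, before applying the PL inequality to get $F(x_{k,+}) \to F(x^*)$. The one substantive difference is cosmetic: for $\norm{F'(x_k)}{*} \to 0$ you argue directly from $F(x_k) - F(x_{k+1}) \to 0$ and \eqref{eq:functionValuesGradientRelation}, while the paper invokes \cref{thm:global_convergence_sublinear_nonconv}; both arguments rest on the same ingredients ($\lambda_k \leq \overline{\lambda}$ from \cref{lem:acceptance_inner_nonconv} and telescoping of \eqref{eq:functionValuesGradientRelation}). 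One small omission worth fixing: the identification theorem you invoke also requires $F$ to be prox-regular at $x^*$. The paper notes this holds automatically because $F = f + \psi$ is the sum of a continuously Fr\'echet differentiable function and a convex lsc function (citing Exercise 13.35 in Rockafellar--Wets); you should record this verification explicitly before applying the theorem.
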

\begin{proof}
First observe that $F$ is prox-regular since it is the sum of a continuously Fréchet differentiable function and a convex function, see \cite[Exercise 13.35]{RockafellarWets}. 
From \cref{thm:global_convergence_sublinear_nonconv}, we know that $F'(x_k) \to 0$ as $k\to \infty$. This implies that
$
    \dist(0, \partial F(x_k)) %
    \leq \norm{F'(x_k)}{*}  \to 0,
$ 
and applying \cite[Theorem 4.10]{LewisZhang} we get that $x_k \in \mathcal{M}$ for large enough $k$.

To prove the second claim, let us for simplicity denote $x_{k,+} \coloneqq  x_+(\lambda_k\slash 2, x_k)$. By \cref{lem:strange_convergence} (which is applicable since  $H(x_k) \succeq 0$ for sufficiently large $k$), we have that $x_{k,+} \to x^*$. This means that there exist $K_1$ such that for all $k\geq K_1$, we have $x_k,x_{k,+}  \in \interior(B_R(x^*))$. Using the Lipschitzness of $\grad f$ on $\interior(B_R(x^*))$, the boundedness of $H$ on $B_R(x^*)$,   and boundedness of $\lambda_k$ by \cref{lem:acceptance_inner_nonconv}, we obtain
    \begin{align*}
    \dist(0, \partial F(x_{k,+})) %
    \leq \norm{F'(x_{k,+})}{*}  \overset{\eqref{eq:F_div_def},\eqref{eq:lambda_bound}}&{\leq} (L_1+M)\norm{x_{k,+}-x_k}{}  + \frac{\bar\lambda}{2}\norm{x_{k,+}-x_k}{}  \to 0 \text{ as $k\to \infty$}.
\end{align*}
Using this and the PL condition \cref{ass:PL}, we also obtain $F(x_{k,+}) \to F(x^*)$. Applying again \cite[Theorem 4.10]{LewisZhang} we get that $x_+(\lambda_k\slash 2, x_k) \in \mathcal{M}$ for large enough $k$. %
\end{proof}

We are now in a position to state and prove the main theorem of this section, which is the sister result of \cref{thm:global_superlinear}.
\begin{theorem}[Global superlinear convergence under partial smoothness]\label{thm:fast_convergence_partial_smoothness}
Let, for problem \eqref{eq:opt},  \cref{ass:basic,ass:subproblem,ass:main_inequality,ass:PL} hold. Let also $(x_k)_{k \in \mathbb{N}}$ be the iterates of \cref{alg:proximal_newton} with arbitrary starting point $x_0\in \dom F$. Let additionally %
\cref{ass:first_manifolds,ass:manifolds_main_ass} hold. Assume also that $F$ is locally convex around  $x^*$ and that $H(x_k) \succeq 0$ for sufficiently large $k$. 

Then, $x^*$ is a global minimum and
\[x_k \to x^*\text{ superlinearly}, \qquad F(x_k)-F^* \to 0\text{ superlinearly}, \qquad\norm{F'(x_k)}{*} \to 0\text{ superlinearly as $k\to \infty$}.\]%
\end{theorem}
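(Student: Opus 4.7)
The strategy is to assemble previously established pieces: global convergence under PL, finite-time manifold identification, vanishing of the regularisation parameter $\lambda_k$ thanks to the active manifold, and then the abstract superlinear convergence criterion of \cref{thm:fast_local}. No new analytical estimates are required; the work is in verifying that every hypothesis aligns.

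First I would invoke \cref{thm:global_convergence_nonconv_PL}, whose hypotheses \cref{ass:basic,ass:subproblem,ass:main_inequality,ass:PL} are all in force. This already gives the strong convergence $x_k \to x^*$ with $x^*$ a global minimiser of \eqref{eq:opt}, together with linear rates for both $F(x_k) - F^*$ and $\|x_k - x^*\|$. Moreover, that theorem also tells us that if we can additionally show $\lambda_k \to 0$, then both rates automatically upgrade to superlinear. So the whole task reduces to proving $\lambda_k \to 0$ and, separately, superlinear decay of the subgradient norm.

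Second, I would verify the hypotheses of \cref{prop:finite_time_id} so as to deduce that, for all sufficiently large $k$, both $x_k \in \mathcal{M}$ and $x_+(\lambda_k/2, x_k) \in \mathcal{M}$. The first part needs only $x_k \to x^*$, $F(x_k) \to F^*$, and \cref{ass:manifolds_main_ass}, all of which are in hand. For the second part we need in addition $H(x_k) \succeq 0$ eventually (given), the PL condition (given), boundedness of $H$ on $B_R(x^*)$ (supplied by \cref{ass:first_manifolds}\ref{item:ball_R}), and Lipschitzness of $\nabla f$ on $\interior(B_R(x^*))$, which follows from \cref{ass:first_manifolds}\ref{item:ball_R} together with \cite[Proposition~2.3]{MR4431970} (as already used in \cref{lem:relation_bdd_hess_and_main_inequality}).

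Third, once finite-time identification is secured, \cref{prop:manifolds_lambda_zero} applies: its conclusion is precisely the uniform semismoothness condition \eqref{eq:manifold_lem12}, which coincides with condition \ref{item:xk_xkplus1} (i.e. \eqref{eq:uniform_ss_condition_convex}) of \cref{thm:fast_local}, and it yields $\lambda_k \to 0$. Plugging $\lambda_k \to 0$ back into \cref{thm:global_convergence_nonconv_PL} upgrades the linear rates for $F(x_k) - F^*$ and $\|x_k - x^*\|$ to superlinear rates.

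Fourth, to obtain the superlinear rate for $\|F'(x_k)\|_*$, I would apply \cref{thm:fast_local} directly. Its structural hypotheses are met: $F$ is locally convex around $x^*$ (given), \cref{ass:semismooth} at $x^*$ follows from \cref{ass:first_manifolds}\ref{item:ball_R}, $H(x_k) \succeq 0$ eventually (given), the PL inequality at every iterate (given), and the bounds \eqref{eq:useful_bounds_abstract} hold by \cref{lem:useful_bounds_new} (since the assumptions of \cref{thm:global_convergence_nonconv_PL} hold). We have just established item \ref{item:xk_xkplus1} of \cref{thm:fast_local}, so the implication chain yields in particular its item \ref{item:gradients_superlinear}, namely $\|F'(x_{k+1})\|_* = \so(\|F'(x_k)\|_*)$. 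This completes the proof. The only potentially delicate step is bookkeeping in the second stage, ensuring that the Lipschitz constant of $\nabla f$ needed by \cref{prop:finite_time_id} is genuinely delivered by the localised Newton differentiability plus boundedness of $H$ in \cref{ass:first_manifolds}\ref{item:ball_R}; everything else is assembly.
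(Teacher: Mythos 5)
Your proof is correct and follows essentially the same route as the paper's: invoke \cref{thm:global_convergence_nonconv_PL} for convergence and optimality of $x^*$, use \cref{ass:first_manifolds}\ref{item:ball_R} together with \cite[Proposition 2.3]{MR4431970} to supply the Lipschitzness of $\nabla f$ needed by the second part of \cref{prop:finite_time_id}, then chain \cref{prop:finite_time_id} $\to$ \cref{prop:manifolds_lambda_zero} to get $\lambda_k \to 0$, and finally feed this into \cref{thm:global_convergence_nonconv_PL} and \cref{thm:fast_local} to obtain the three superlinear rates. The only difference is one of exposition: you are slightly more explicit than the paper about which conclusion (iterates and objective values versus subgradient norms) is harvested from which result.
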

\begin{proof}
Note that we are in the regime of \cref{thm:global_convergence_nonconv_PL}. Also, thanks to \cref{ass:first_manifolds} \ref{item:ball_R}, by \cite[Proposition 2.3]{MR4431970} we have that $\grad f$ is Lipschitz on $\interior(B_R(x^*))$, so the full result of \cref{prop:finite_time_id} is available. The theorem now follows from collecting the implications of \cref{thm:global_convergence_nonconv_PL} (which gives $x_k \to x^*$, $F(x_k) \to F(x^*)$, and that $x^*$ is a global minimum), and in turn \cref{prop:finite_time_id} (which gives $x_k, x_+(\lambda_k\slash 2, x_k) \in \mathcal{M}$ for sufficiently large $k$) and \cref{prop:manifolds_lambda_zero} (which gives $\lambda_k \to 0$) and applying \cref{thm:fast_local}.
\end{proof}
\begin{remark}\label{rem:DM_holds_manifolds}
    In particular, under the conditions of the above result, we see that the DM condition \eqref{eq:DM_condition} is satisfied. Indeed, we proved in \cref{prop:manifolds_lambda_zero} the condition \eqref{eq:uniform_ss_condition_convex}, which is equivalent to the DM condition \eqref{eq:DM_condition} as shown in \cref{thm:fast_local}.
\end{remark}
For the result of \cref{thm:fast_convergence_partial_smoothness}, the fact that we need $\grad f|_{\mathcal{M}}$ to be $C^1$ from \cref{ass:first_manifolds} \ref{item:gradf_C1_on_M} is inconvenient since in combination with the normal sharpness condition in \cref{ass:manifolds_main_ass} \ref{item:F_PS}, it means that  we require $\psi$ to be non-zero and to provide the active manifold $\mathcal{M}$ on which we need $\grad f$ to be $C^1$. This is because we always assume $f \in C^1$ and hence $f$ cannot satisfy the normal sharpness condition (unless $N_{\mathcal{M}}(x^*) = \emptyset$) on a submanifold as $f$ is differentiable everywhere on its domain. However,   our algorithm identifies the manifold irrespective of whether $\grad f|_{\mathcal{M}}$ is $C^1$ or not. Nevertheless, we hope future work can excise this assumption.

\subsubsection{Example}\label{subsec:manifold_example} In order to build intuition and gain a geometric understanding, we discuss the results of this subsection through a small academic example, postponing more realistic applications to future work. For this purpose we define $F\colon \mathbb{R}^2 \to \mathbb{R}$ via $F(x) = f(x) + \psi (x)$ where for $x = (x_1,x_2)$
\begin{equation*}
    f(x_1,x_2) =  \|x\|^2 + \max(0,x_1)^2, \quad \psi(x_1,x_2) = |x_1|.
\end{equation*}
We directly observe that $f\colon\mathbb{R}^2 \to \mathbb{R}$ is globally $C^{1,1}(\mathbb{R}^2,\mathbb{R})$, with gradient
\begin{equation*}
    \nabla f(x_1,x_2) = 
    \begin{bmatrix} 
    2x_1 + 2\max(0,x_1) \\
    2x_2  
    \end{bmatrix}.
\end{equation*}
Clearly, the function $\nabla f\colon \mathbb{R}^2 \to \mathbb{R}^2$ is Newton differentiable with Newton derivative $H(x) \in \mathbb{R}^{2 \times 2}$ defined by %
\begin{equation*}
    H(x)=  
    \begin{bmatrix} 
    2 + 2 \chi_{[0,+\infty)}(x_1) &0 \\
    0 & 2   
    \end{bmatrix},
\end{equation*}
where the notation $\chi_A\colon \HH \to \mathbb{R}$ denotes the characteristic function on the set $A$, i.e., $\chi_A(x) = 1$ if $x \in A$ and $\chi_A(x)=0$ if $x \not\in A$. We have $H(x) \succeq 0$ and $H(x)$ is globally bounded in the sense of \eqref{eq:bounded_Hessian}. We further observe that the function $F$ is strongly convex as the sum of a convex and the strongly convex function $\| \cdot\|^2$ on $\mathbb{R}^2$. Strong convexity implies that it satisfies the global PL condition \cref{ass:PL} and has the global unique minimiser $x^* = (0,0)$. Note that it is not possible to find a ball $B_R(x^*)$ such that $f$ is $C^2$ on $B_R(x^*)$ and therefore the results of \cref{thm:global_convergence_c2} and \cref{lem:c2_implies_dm} (which is used to verify the Dennis--Moré condition, in order to apply \cref{thm:fast_local}) are not applicable.
Now consider the subdifferential of $F$ at $x^*$: we directly obtain by the sum rule %
\begin{equation}
    \partial F(x^*) 
    =  [-1,1] \times\{0\}. \label{eq:subdiff_manifold_example}
\end{equation}
Given the global solution $x^*$, we may set $\mathcal{M}\coloneqq  \{0\} \times(-1,1)$. Then $F$ is $C^2$-partly smooth at $x^*$ relative to $\mathcal{M}$. To see this, we argue as follows. First, \cref{def:partial_smoothness} \ref{item:M_is_C2_manifold} is immediate as we see that $F$ is $C^2$ and $\nabla f$ is $C^1$ on $\mathcal{M}$:
\begin{equation*}
    F|_{\mathcal{M}}(x) = x_2^2, \quad \text{and} \quad  \nabla f|_{\mathcal{M}}(x_1,x_2) = 
    \begin{bmatrix} 
    0\\
    2x_2  
    \end{bmatrix}.
\end{equation*}
Moreover, $F$ is convex and therefore subdifferentially regular, i.e., \cref{def:partial_smoothness} \ref{item:PS_reg} is satisfied. Moreover $N_\mathcal{M}(x^*) = \mathbb{R} \times\{0\} = \parm \partial F(x^*)$ according to \eqref{eq:subdiff_manifold_example}, giving \cref{def:partial_smoothness} \ref{item:PS_NS}. To show continuity of $\partial F$ at $x^*$ relative to $\mathcal{M}$, according to \cite[Note 2.9 (b)]{lewis2002active} it is enough to show that for any sequence $x_k \in \mathcal{M}$ with $x_k \to x^*$ and any $v^* \in \partial F(x^*)$ there is a sequence of subgradients $v_k \in \partial F(x_k)$ with $v_k \to v^*$ (here we note that again by the sum rule, for $x=(0,x_2)$ belonging to $\mathcal{M}$,  $\partial F(x) = [-1,1]\times \{2x_2\}$).
This is easy to prove: for given $x_k=(0,x_{k,2}) \in \mathcal{M}$ and $v^* = (v_1^*,0)$ for some $v_1^*\in [-1,1]$ (every $v^* \in \partial F(x^*)$ is of this form by construction), we simply set $v_k = (v_1^*,2x_{k,2}) \in \partial F(x_k)$ and deduce $v_k\to v^*$ as $x_k \to x^*$. Consequently also  \cref{def:partial_smoothness} \ref{item:PS_SC} holds. Eventually in order to apply \cref{thm:fast_convergence_partial_smoothness}, we need to show non-degeneracy, i.e. $0 \in \relint \partial F(x^*)$. %
Recalling  $\partial F(x^*)  = [-1,1] \times \{0\}$ and $\aff(\partial F(x^*)) = \mathbb{R}\times \{0\}$ and using the definition of the relative interior,
\begin{equation*}
    \relint \partial F(x^*) = \{ v \in [-1,1] \times \{0\} : \text{$\exists \varepsilon>0$ s.t. $\interior(B_\varepsilon(v)) \cap \aff( \partial F(x^*) ) \subseteq  \partial F(x^*)$} \} = (-1,1) \times \{0\}. 
\end{equation*}
Hence $0 \in \relint \partial F(x^*)$ and we may apply \cref{thm:fast_convergence_partial_smoothness} to obtain superlinear convergence of our method on this problem.
\begin{figure}
    \centering
    \begin{minipage}[t]{0.25\textwidth}
    \centering
    \includegraphics[width=\linewidth]{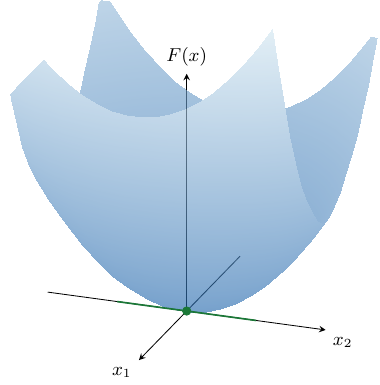} 
    \small (a) 
  \end{minipage}
  \hspace{0.05\textwidth}
  \begin{minipage}[t]{0.25\textwidth}
    \centering
     \includegraphics[width=\linewidth]{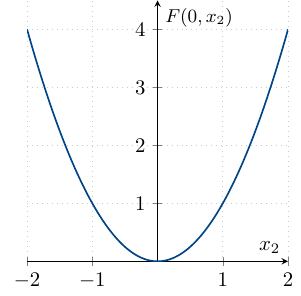}
    \small (b) 
  \end{minipage}
  \hspace{0.05\textwidth}
  \begin{minipage}[t]{0.25\textwidth}
    \centering
    \includegraphics[width=\linewidth]{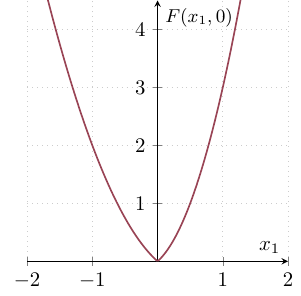}
    \small (c) 
  \end{minipage}
  \caption{Depiction of the function $F: \mathbb{R}^2 \to \mathbb{R}$ as described in the example in \cref{subsec:manifold_example}. 
In panel~(a), the manifold $\mathcal{M}$ is shown in green along with the minimiser $x^* = (0, 0)$. 
Panel~(b) illustrates the $C^2$ behaviour of $F$ when restricted to the manifold. 
Panel~(c) highlights the sharpness of the function along the normal direction $N_{\mathcal{M}}(x^*)$.}

    \label{fig:manifold_example}
\end{figure}

    \section{Numerical Experiments}\label{sec:numerics}

\noindent \textbf{Data availability.} Software to generate all the tables and figures in this section may be found at \cite{github:leapssn}. The specific version used in this paper is archived on Zenodo \cite{zenodo}. The implementation is written in Python. The first two examples utilise the Firedrake library \cite{FiredrakeUserManual} to implement the finite element discretisations as well as PETSc \cite{petsc} for its sparse linear algebra routines. The final example adapts code found in the \texttt{super-newton} library \cite{super-newton}.

In the first two examples, we assume $\Omega \subset \mathbb{R}^2$ to be a bounded and convex domain. As is standard, we denote the Lebesgue spaces by $L^p(\Omega)$, $p \in [1,\infty]$ and write $W^{s,p}(\Omega)$, $s\geq 0$, $p\in[1,\infty]$ to denote the Sobolev spaces  \cite{Adams1975}. We define the Sobolev Hilbert spaces as $H^s(\Omega) \coloneqq W^{s,2}(\Omega)$. We denote the space of compactly supported smooth functions by $C_c^\infty(\Omega)$. For a vector-valued function $v \colon \Omega \to \mathbb{R}^2$ where each component lives in the space $H^s(\Omega)$ (or $C^\infty_c(\Omega)$), we say that $v \in H^s(\Omega;\mathbb{R}^2)$ (respectively $C^\infty_c(\Omega;\mathbb{R}^2)$).

\subsection{A contact problem via a Moreau--Yosida regularisation}
\label{sec:examples:signorini}

We consider a Signorini contact problem \cite{fichera1963} where a partially clamped linearly elastic beam, deformed by gravity, must lie above a nontrivial obstacle. In particular we fix a rectangular domain $\Omega = (0,5) \times (0,1)$ and seek a displacement $u \colon \Omega \to \mathbb{R}^2$ that satisfies %
\begin{subequations}
\label{eq:signorini}
\begin{align}
    \min_{u \in H^1(\Omega;\mathbb{R}^2)} \int_\Omega \mu_L |\varepsilon(u)|^2 + \frac{\lambda_L}{2} |\operatorname{div}(u)|^2 - g \cdot u \, \mathrm{d}x,\\
    u = (0,0)^\top \; \text{on} \; \Gamma_D = \{ (x,y) \in \partial \Omega : x=0 \; \text{or} \; x=5\},\label{signorini:2} \\
    (u - \varphi) \cdot n_u \geq 0 \; \text{on} \; \Gamma_S = \{(x,y) \in \partial \Omega : y = 0\}. \label{signorini:3}
\end{align}
Here $\mu_L = 76.92$ and $\lambda_L = 115.38$ are the Lam\'e coefficients, $\varepsilon(u) = (\nabla u + (\nabla u)^\top)/2$ denotes the symmetrised gradient, $g = (0,-10)^\top$ is the body force due to gravity, and the Dirichlet boundary condition, \labelcref{signorini:2}, models an elastic beam that is clamped on both the left- and right-hand side. Inequality \labelcref{signorini:3} models the Signorini (non-penetration) condition at the bottom of the beam enforcing that the beam must lie above a rigid body with the shape $(0,5)\times(0,1/4)$ under the deformation
\begin{align}
\varphi(x,y) = (0, -1/2-2\sin(\pi x)/5)^\top.
\end{align}
Finally $n_u$  denotes the outward normal of the \emph{deformed} elastic body as given by
\begin{align}
n_u = \frac{(I + \nabla u)^{-\top} \tilde{n}}{|(I + \nabla u)^{-\top} \tilde{n}|} \;\; \text{where} \; \tilde{n} = (0,-1)^\top \; \text{and $I$ is the $2 \times 2$ identity matrix}.
\end{align} 
\end{subequations}
The pointwise constraint can be difficult to enforce directly. Hence, the constraint is relaxed and we include a nonsmooth Moreau--Yosida regularisation to the formulation. For a given  penalisation parameter $\gamma >0$, the relaxed problem seeks $u_\gamma \colon \Omega \to \mathbb{R}^2$ satisfying
\begin{align}
\label{eq:signorini-my}
\begin{gathered}
    u_\gamma = \argmin_{v \in H^1(\Omega;\mathbb{R}^2)} F(v) \;\;\text{subject to} \;\; v = (0,0)^\top \; \text{on} \; \Gamma_D,\\
\text{where} \; F(v) = \int_\Omega  \mu_L |\varepsilon(v)|^2 + \frac{\lambda_L}{2} |\operatorname{div}(v)|^2 - g \cdot v \, \mathrm{d}x + \frac{\gamma}{2} \int_{\Gamma_S} \max(0, (v-\varphi)\cdot n_v)^2 \, \mathrm{d}s.
\end{gathered}
\end{align}
Thus we fix $\psi \equiv 0$ in \eqref{eq:opt}. %
We mesh the domain into 24,000 simplices and discretise \labelcref{eq:signorini-my} with a continuous piecewise linear finite element method for $u$ resulting in 24,682 degrees of freedom \cite[Ch.~7.4]{ern2021:I}. A discretised approximate solution, when $\gamma=10^6$, is plotted in \cref{fig:signorini:a}.

In order to minimise the violation of the non-penetration condition, we wish to consider a large penalisation parameter $\gamma$. However, the larger the penalty, the more difficult the problem becomes to solve. In particular a Newton method, even coupled with a linesearch, will struggle to converge within 200 iterations. This effect is demonstrated in \cref{fig:signorini:b} where we compare \cref{alg:proximal_newton} against a Newton solver without a linesearch, with an $\ell^2$-minimising linesearch \cite[Alg.~2]{Brune2015} ($\ell^2$-Newton) and with a backtracking linesearch \cite[Ch.~6.3]{dennis1996}  (backtracking Newton). From a zero initial guess, we found that even for $\gamma$-values as low as $10^3$, the Newton solver failed to converge. The backtracking and $\ell^2$-Newton solvers were more robust, achieving convergence up to $\gamma=10^4$ and $\gamma=10^3$, respectively, but failing once $\gamma \geq 10^5$. By contrast, \cref{alg:proximal_newton} successfully converged up until $\gamma=10^6$.

\begin{figure}[h!]
\begin{subfigure}[c]{0.49\textwidth}
\includegraphics[width = \textwidth]{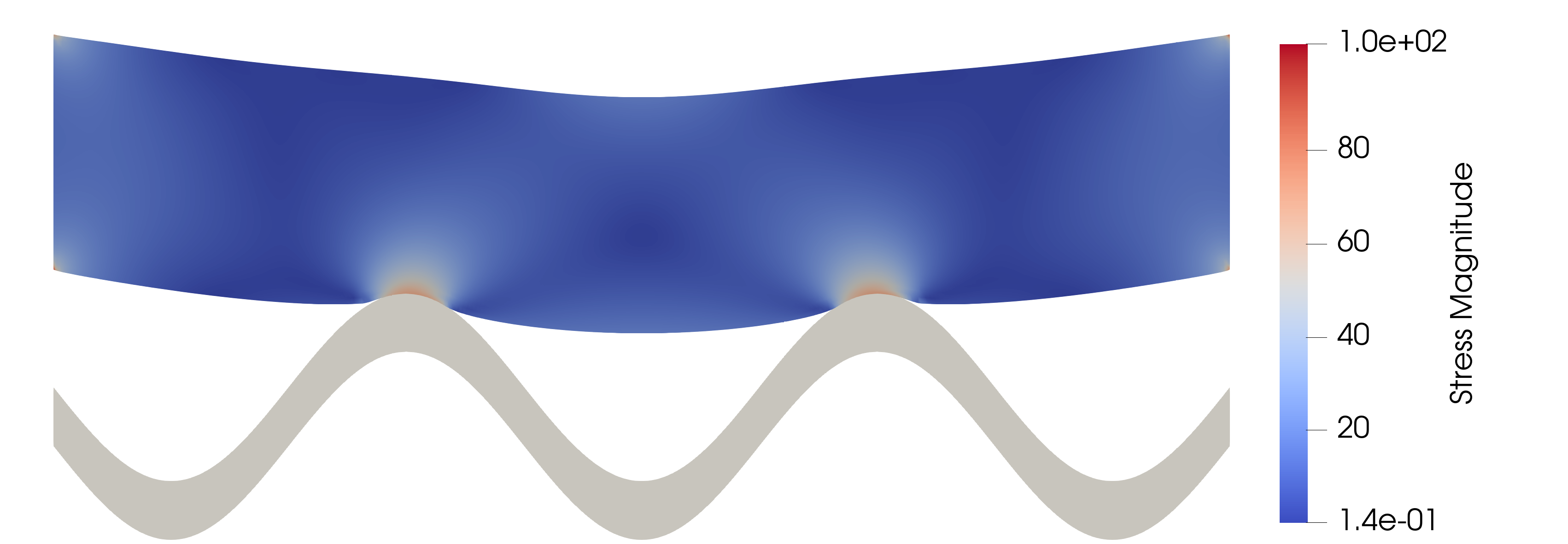}
\caption{Approximate solution when $\gamma=10^6$.}
\label{fig:signorini:a}
\end{subfigure}~
\begin{subfigure}[c]{0.3\textwidth}
\renewcommand{\arraystretch}{1.1}
\centering
\small
\begin{tabular}{|l|c|c|c|c|c|}
\hhline{~|-----}
\rowcolor{lightgray!10} \multicolumn{1}{c|}{\cellcolor{lightgray!00}} & \multicolumn{5}{c|}{$\gamma$}\\
\hhline{|-|-|-|-|-|-|}
\rowcolor{lightgray!10} \multicolumn{1}{|c|}{Solver} & $10^3$ & $10^4$  &$10^5$ &$10^6$ &$10^7$\\
\hline
\cref{alg:proximal_newton} &  23 & 41 &  81 & 185 & - \\
Newton &  - & - &  -& - &-  \\
$\ell^2$-Newton &  63 & - &  -& - &  -\\
Backtracking Newton &  70 & 149 &  -& - &  -\\
\hline
\end{tabular}
\caption{Solver iteration counts.}
\label{fig:signorini:b}
\end{subfigure}
\caption{Results for the contact problem of \cref{sec:examples:signorini}. (Left) The discretised solution to the relaxed Signorini problem \labelcref{eq:signorini-my} with $\gamma=10^6$. The colourmap indicates the magnitude of the stress in the elastic beam and the obstacle is plotted in grey. (Right) The number of linear system solves (including those leading to rejected updates in \cref{alg:proximal_newton}) to reach a residual vector $\ell^2$-norm ($\|F'\|_{\ell^2}$) of $10^{-8}$ for increasing values of $\gamma$. The solvers are initialised at a zero initial guess $u = (0,0)^\top$ for each penalty parameter $\gamma$. A dashed line indicates a failure to converge within 200 linear system solves. We choose the parameters $\alpha = 1/2$, $\beta = 1/4$ and $\Lambda_0 = 1$  for \cref{alg:proximal_newton} and use the $H^1$-norm, $\| u \|^2  =   \|u\|^2_{L^2(\Omega)} + \|\nabla u\|^2_{L^2(\Omega)}$, for the primal space norm and its corresponding discrete dual norm for $\| \cdot \|_*$.
}
\label{fig:signorini}
\end{figure}

In \cref{fig:signorini:convergence} we plot the reduction in the objective against the iteration number $k$. For $\gamma=10^6$, we include the theoretical global convergence rate predicted by \labelcref{eq:conv_rate_product}, where $\alpha=1/2$ and we estimate $D_0$ by $D_0 \approx \max_{i}\{\|u^* - u_i\|_{H^1(\Omega)}\} = \|u^*\|_{H^1(\Omega)}$. This gives us the estimate $F(u_k)-F^* \leq 4 k^{-1} (\Pi_{j=0}^{k-1} \lambda_j)^{1/k} \|u^*\|^2_{H^1(\Omega)}$.  For this example, we see that the algorithm performs orders of magnitude better than what is theoretically guaranteed. We also observe eventual local superlinear convergence for all values of $\gamma$.

\begin{figure}[h!]
\centering
\includegraphics[width =0.5 \textwidth]{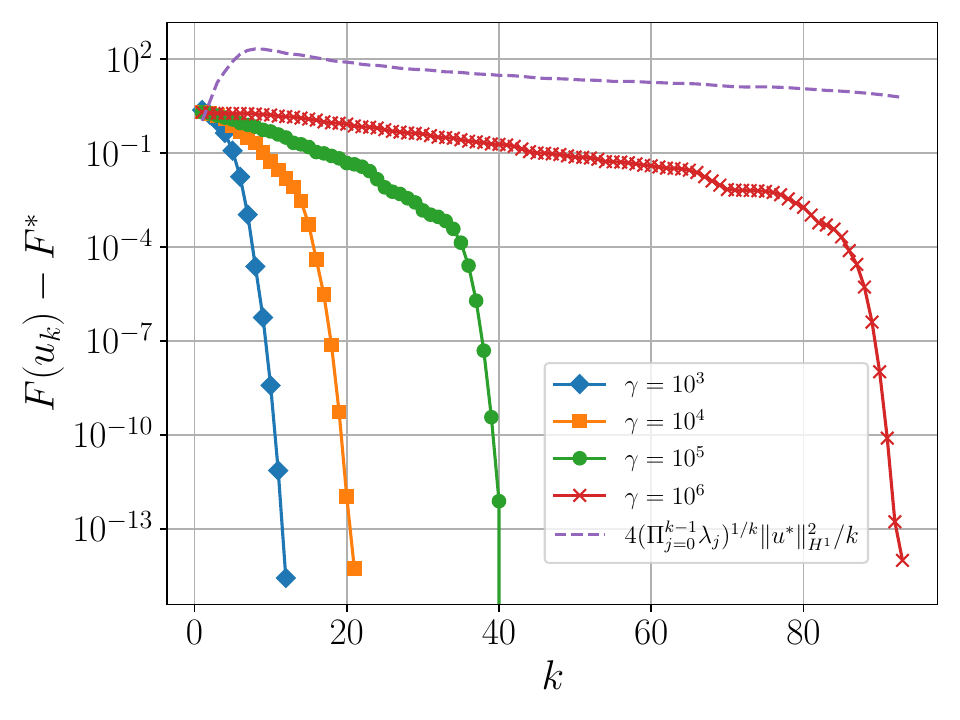}
\caption{The reduction in the difference between the current and final objective with increasing iteration number $k$ when using \cref{alg:proximal_newton} to minimise \labelcref{eq:signorini-my} for various choices of $\gamma$. We observe local superlinear convergence. The dashed line indicates the theoretical global convergence rate when $\gamma=10^6$.}
\label{fig:signorini:convergence}
\end{figure}

\subsection{Total variation image restoration}
\label{sec:examples:image-restoration}
In this example we consider an image restoration problem. The domain is the unit square $\Omega = (0,1)^2$ and we are given a polluted image represented by a function $\omega \coloneqq g + \sigma \eta$ for some $g \colon \Omega \to [0,1]$, $\sigma > 0$, and normally distributed noise $\eta \sim \mathcal{N}(0,1)$. Our goal is to recover an approximation to the original image represented by $g$. One approach is to consider an optimisation problem where the minimiser is regularised by the total variation seminorm. In other words, we seek $u\colon \Omega \to \mathbb{R}$ that minimises
\begin{align}
\min_{u \in \mathrm{BV}(\Omega)} \frac{1}{2}  \|u - \omega\|^2_{L^2(\Omega)}  + \delta |u|_{\mathrm{TV}},
\label{eq:image:1}
\end{align}
where $|u|_{\mathrm{TV}}$ denotes the total variation of $u$,
\begin{align*}
|u|_{\mathrm{TV}} \coloneqq \sup \left\{ \int_\Omega u \operatorname{div}(v) \, \mathrm{d}x : v \in C^\infty_c(\Omega; \mathbb{R}^2), \|v\|_{L^\infty(\Omega)} \leq 1\right\},
\end{align*}
and $\mathrm{BV}(\Omega)$ denotes the space of functions of bounded variation \cite{kunisch2004}.  Regularising $u$ with its total variation, as opposed to the $H^1$-norm, is known to better reduce blurring and better preserve edges \cite{rudin1992,kunisch2004}. However, it is difficult to directly numerically tackle \labelcref{eq:image:1}. Hence, we follow the approach outlined in \cite{kunisch2004} and introduce the equivalent Fenchel predual problem which seeks $p\colon \Omega \to \mathbb{R}^2$ that minimises
\begin{align}
\min_{p \in H_0(\operatorname{div})} \frac{1}{2} \|\operatorname{div}(p) + \omega\|^2_{L^2(\Omega)} \;\; 
\text{subject to} \;\; -\delta {\bf 1} \leq p \leq \delta {\bf 1} \; \text{a.e.~in} \; \Omega,
\label{eq:image:2}
\end{align}
where ${\bf 1} = (1,1)^\top$. Here $H_0(\operatorname{div})$ denotes the space
\begin{align*}
H_0(\operatorname{div}) \coloneqq \{ p \in L^2(\Omega; \mathbb{R}^2) : \operatorname{div}(p) \in L^2(\Omega), \; p \cdot \tilde{n} = 0 \; \text{on} \; \partial \Omega \},
\end{align*}
where $\tilde{n}$ is the outward normal to $\partial \Omega$. The minimisers $u$ of \labelcref{eq:image:1} and $p$ of \labelcref{eq:image:2} are then related by $u = \operatorname{div}(p) + \omega$ \cite[Theorem~2.2]{kunisch2004}.

To approximate the minimiser of \labelcref{eq:image:2}, we first mesh the domain into $400 \times 400$ uniform quadrilateral cells and employ a first-order $H_0(\operatorname{div})$-conforming Raviart--Thomas ($\mathrm{RT}_1$) discretisation for $p$ \cite[Chapter~14.5.2]{ern2021:I}. To enforce the pointwise box constraints, we regularise the problem with a nonsmooth Moreau--Yosida regularisation with penalisation parameter $\gamma>0$. We also include a broken $H^1$-smoothing term controlled by $\epsilon > 0$. Altogether we seek $p_{h} \colon \Omega \to \mathbb{R}^2$ that satisfies
\begin{align*}
\begin{gathered}
p_{h} = \argmin_{q_h \in \mathrm{RT}_1} F(q_h) \;\; \text{subject to} \;\; q_h \cdot \tilde{n} = 0 \; \text{on } \partial \Omega,\\
\text{where} \; F(q) = \frac{1}{2} \|\operatorname{div}(q) + \omega\|^2_{L^2(\Omega)} 
+ \frac{\gamma}{2} \| \max({\bf 0},  q-\delta{\bf 1})\|^2_{L^2(\Omega)}  + \frac{\gamma}{2} \| \min({\bf 0},  q+\delta{\bf 1})\|^2_{L^2(\Omega)}
+ \epsilon | q|^2_{H^1(\mathcal{T}_h)}.
\end{gathered}
\end{align*}
Here $\mathcal{T}_h$ is the triangulation of the domain and $|q|^2_{H^1(\mathcal{T}_h)} \coloneqq \sum_{K \in \mathcal{T}_h} |\nabla q|^2_{L^2(K)}$. Note that we set $\psi \equiv 0$ in \eqref{eq:opt}. %

We now fix the model parameters $\delta = 10^{-4}$, $\epsilon = 10^{-1}$, and $\sigma = 0.06$. We pick $g$ to be the piecewise constant finite element projection of the  Shepp--Logan phantom image \cite{shepp1974}. We will consider various choices for $\gamma$.

We plot the original and polluted images as well as the restoration in \cref{fig:image-restoration:a}. We also provide the approximate solutions' peak signal-to-noise ratio (PSNR) as defined by
$\operatorname{PSNR}(u) =     10 \log_{10}( |\Omega| \cdot \| u - g\|^{-2}_{L^2(\Omega)} )$. As in the previous example, in \cref{fig:image-restoration:b}  we compare \cref{alg:proximal_newton} against a Newton solver without a linesearch, with an $\ell^2$-minimising linesearch \cite[Algorithm~2]{Brune2015}, and a backtracking linesearch \cite[Chapter~6.3]{dennis1996} for a range of values of $\gamma$.
We observe that the classical Newton method is surprisingly effective requiring the fewest linear system solves when $\gamma \leq 10^6$. However, it fails to converge once $\gamma=10^7$. The other three solves do converge. Aside from when $\gamma=10^4$, backtracking Newton requires the largest number of iterations. \cref{alg:proximal_newton} and $\ell^2$-Newton have a similar performance throughout.

\begin{figure}[h!]
\begin{subfigure}[c]{0.49\textwidth}
\includegraphics[width = \textwidth]{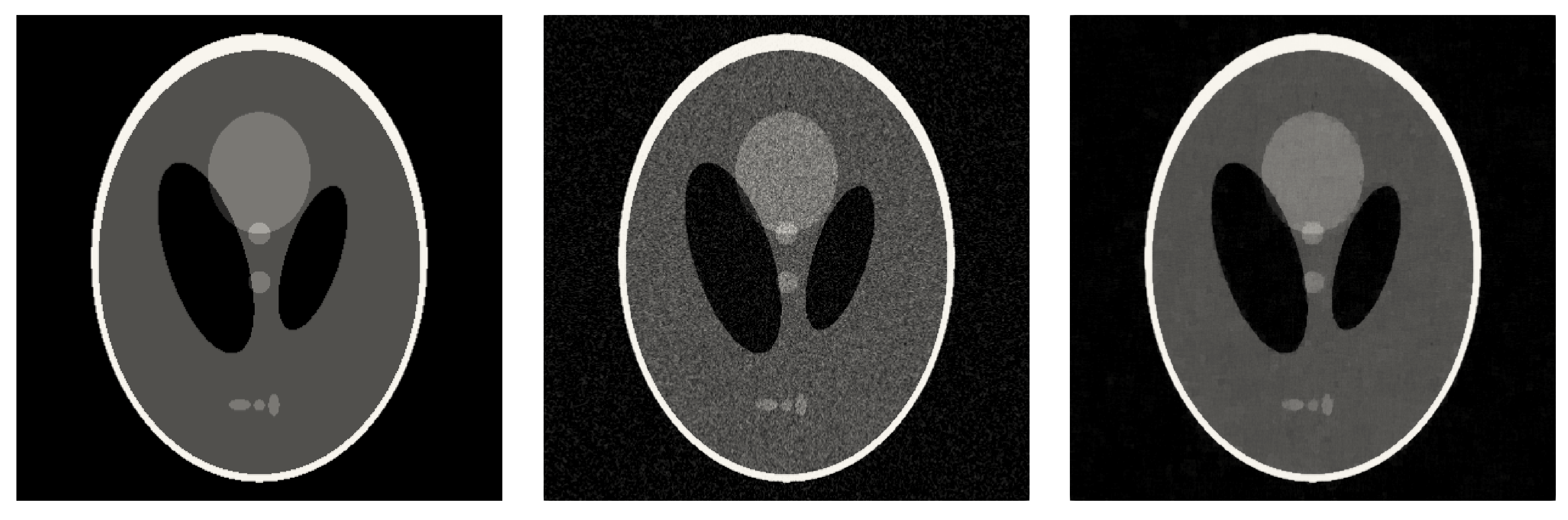}
\caption{The original, polluted, and restored images.}
\label{fig:image-restoration:a}
\end{subfigure}~
\begin{subfigure}[c]{0.3\textwidth}
\renewcommand{\arraystretch}{1.1}
\centering
\small
\begin{tabular}{|l|c|c|c|c|}
\hhline{~|----}
\rowcolor{lightgray!10} \multicolumn{1}{c|}{\cellcolor{lightgray!00}} & \multicolumn{4}{c|}{$\gamma$}\\
\hhline{|-|-|-|-|-|}
\rowcolor{lightgray!10} \multicolumn{1}{|c|}{Solver} & $10^4$ & $10^{5}$  &$10^{6}$ &$10^{7}$\\
\hline
\cref{alg:proximal_newton} &  17 & 22 & 31 & 55   \\
Newton & 10  & 13 &  18 & -   \\
$\ell^2$-Newton & 11 & 22 & 36 & 53 \\
Backtracking Newton & 10  & 33 & 74 & 127 \\
\hline
\end{tabular}
\caption{Solver iteration counts.}
\label{fig:image-restoration:b}
\end{subfigure}
\caption{Results for the image restoration problem of \cref{sec:examples:image-restoration} with parameters $\delta = 10^{-4}$, $\sigma = 0.06$, and $\epsilon = 10^{-1}$. (Left) The original ($g$), polluted ($\omega$, PSNR: 27.97) and restored ($u = \operatorname{div}(p) + \omega$, PSNR: 39.10) images when $\gamma=10^7$. (Right) The number of linear system solves (including those leading to rejected updates in \cref{alg:proximal_newton}) to reach a residual vector $\ell^2$-norm ($\|F'\|_{\ell^2}$) of $10^{-8}$ for increasing values of $\gamma$. We use the initial guess $p = \delta{\bf 1}$ for each value of $\gamma$. \cref{alg:proximal_newton} is initialised with $\alpha = \beta = 10^{-4}$ and $\Lambda_0 = 64$. We use the norm $\| p \|^2  = \|p\|^2_{L^2(\Omega)} + \| \operatorname{div} p\|^2_{L^2(\Omega)} + |p|^2_{H^1(\mathcal{T}_h)}$ for the primal space norm and its corresponding discrete dual norm for $\| \cdot \|_*$.  A dashed line indicates a failure to converge within 500 linear system solves.
}
\label{fig:image-restoration}
\end{figure}

\subsection{Support Vector Machine classification}
\label{sec:examples:svc}

Our final example is a Support Vector Machine classification problem from the field of machine learning \cite{cortes1995}. Given a number of points $x_1, \dots, x_\ell \in \mathbb{R}^n$ for some $\ell, n \in \mathbb{N}$ where each point $x_i$ is classified into one of two groups denoted by $y_i \in \{-1,1\}$, $i=1,\dots,\ell$, the aim is to find an $\omega \in \mathbb{R}^n$ and $b \in \mathbb{R}$ that defines a hyperplane $\omega^\top x + b =0$ which separates the two groups. As in \cite{yin2019}, given a $\gamma>0$, the L2-loss SVM model seeks $(\omega, b) \in \mathbb{R}^n \times \mathbb{R}$ that minimises 
\begin{align*}
F(\omega,b) = \frac{1}{2} \|\omega\|^2_{\ell^2} + \gamma \sum_{i=1}^\ell \max(1 - y_i (\omega^\top x_i + b), 0)^2.
\end{align*}
We visualise the solution to one data set where $n=2$ and $\ell=10,000$ in \cref{fig:svc:a}. In \cref{fig:svc:b}, we test \cref{alg:proximal_newton} for $\ell=10,000$ with $n \in \{2,20,200, 2000\}$ and $\gamma \in \{10^{-4}, 10^{-2}, 1, 10^2, 10^4\}$. The data $x_i$ and $y_i$, $i \in \{1,\dots,\ell\}$ are generated by a reproducible classification problem generator from the Python package \texttt{scikit-learn} \cite{scikit-learn}. We observe that \cref{alg:proximal_newton} always converges for the parameters we consider. More iterations are required as both $n$ and $\gamma$ become larger.

\begin{figure}[h!]
\centering
\begin{subfigure}[c]{0.4\textwidth}
\includegraphics[width = \textwidth]{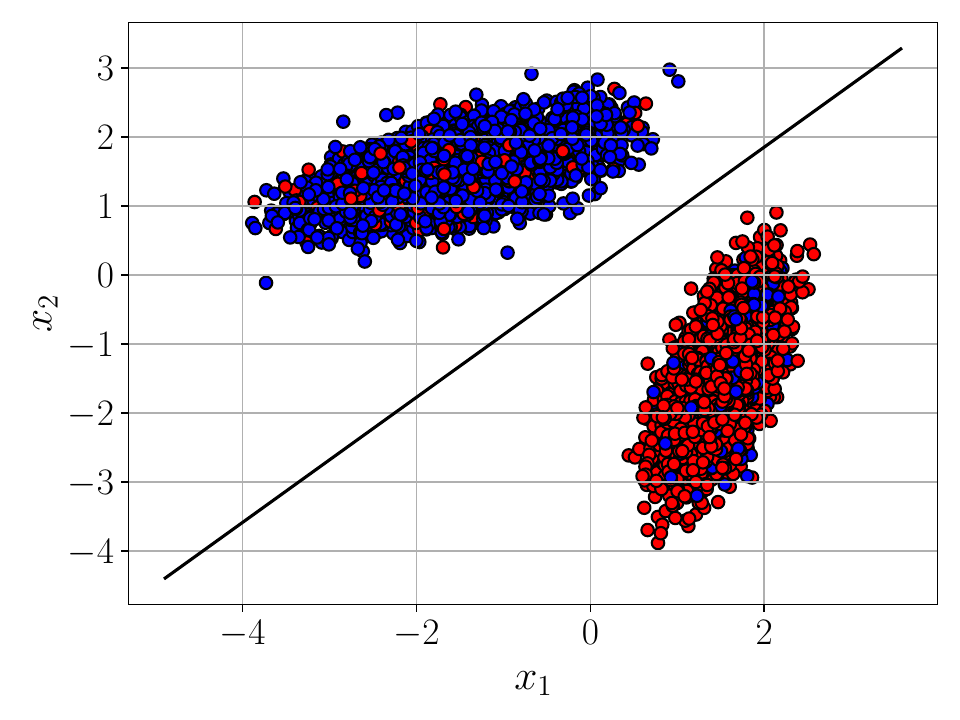}
\caption{The data points and separating hyperplane.}
\label{fig:svc:a}
\end{subfigure}~\qquad
\begin{subfigure}[c]{0.3\textwidth}
\renewcommand{\arraystretch}{1.1}
\centering
\small
\begin{tabular}{|l|c|c|c|c|}
\hhline{~|----}
\rowcolor{lightgray!10} \multicolumn{1}{c|}{\cellcolor{lightgray!00}} & \multicolumn{4}{c|}{$n$}\\
\hhline{|-|-|-|-|-|}
\rowcolor{lightgray!10} \multicolumn{1}{|c|}{$\gamma$} & 2 & 20  & 200 & 2000\\
\hline
$10^{-4}$ &  5 & 4 & 7 & 10  \\
$10^{-2}$  & 6  & 10 & 18 & 21   \\
$10^{0}$  & 7 & 21 & 24 & 30 \\
$10^{2}$  & 8  & 27 & 31 & 36 \\
$10^{4}$ & 9  & 34 & 37 & 43 \\
\hline
\end{tabular}
\caption{Solver iteration counts.}
\label{fig:svc:b}
\end{subfigure}
\caption{Results for the Support Vector Machine classification problem of \cref{sec:examples:svc}. (Left) Plot of the data points $x_\ell \in \mathbb{R}^2$ and the separating line $\omega^\top x + b$ when $n=2$ and $\gamma=10^2$. A red and blue dot indicates a value of $y_i=1$ and $y_i=-1$, respectively. (Right) The number of linear system solves (including those leading to rejected updates) for \cref{alg:proximal_newton} to reach a residual vector $\ell^2$-norm ($\|F'\|_{\ell^2}$) of $10^{-6}$ for various values of $\gamma$ and $n$. \cref{alg:proximal_newton} is initialised with $\alpha = \beta = 10^{-1}$, $\Lambda_0 = 3 \gamma \cdot (\sum_{i=1}^\ell\|x_i\|^2_{\ell^2})^{1/2}$, and $(\omega_0, b_0) = ({\bf 1}/2, 1/2)$. We use the Euclidean norm $\| z \|^2  = \| z \|_*^2 = \|z\|^2_{\ell^2}$ for the primal and dual norms.
}
\label{fig:svc}
\end{figure}

\section*{Conclusion}
In this paper, we introduced our SSN-type algorithm \algname{LeAP-SSN} (\cref{alg:proximal_newton}) and showed that it possesses classical local superlinear convergence as well as global convergence rates and, in some settings, accelerates to global superlinear convergence. Notably, these results are achieved with a relaxation of the standard strong convexity assumption to a PL condition. A key to our global convergence rates is \cref{ass:main_inequality}, which is satisfied for a large class of semismooth functions. Another important feature of our algorithm is its adaptivity not only to the parameters of the function class (such as $\mu$ or $L$), but also to the class itself. In other words, we have one algorithm for all the considered settings without requiring knowledge of whether the function is convex or not, whether it satisfies the PL condition or not, nor whether the function is semismooth or not.  We also explored how partial smoothness and active manifold identification can be utilised in the nonsmooth context. We concluded with some numerical experiments in both finite and infinite-dimensional contexts. We believe that this paper is a good starting point for future research on algorithms that combine global convergence rates that are well studied in the world of second-order methods for functions with Lipschitz Hessians and improved asymptotic convergence rates well developed in the works on SSN-type methods.

\printbibliography

\end{document}